\newtheorem{thmx}{Theorem}
\newtheorem{thm}{Theorem}[section]
\newtheorem{cor}[thm]{Corollary}
\newtheorem{lem}[thm]{Lemma}
\newtheorem{prop}[thm]{Proposition}
\theoremstyle{definition}
\newtheorem{defn}[thm]{Definition}
\newtheorem{exam}[thm]{Example}
\newtheorem{rem}[thm]{Remark}
\numberwithin{equation}{section}
\newcommand{\eps}{\varepsilon}
\newcommand{\bbn}{\mathbb{N}}
\newcommand{\bbz}{\mathbb{Z}}
\newcommand{\orb}{\mathrm{Orb}}
\newcommand{\supp}{\mathrm{supp}}
\newcommand{\calf}{\mathcal{F}}
\newcommand{\N}{\mathbb{N}}
\newcommand{\Z}{\mathbb{Z}}
\newcommand{\ra}{\rightarrow}
\newcommand{\ep}{\epsilon}
\newcommand{\calp}{\mathcal{P}}
\DeclareMathOperator{\diam}{diam}
\DeclareMathOperator{\tran}{Tran}
\begin{document}
\title{Broken family sensitivity in transitive systems}
\author{Jian Li and Yini Yang}
\address{Department of Mathematics, Shantou University,
	Shantou, Guangdong, 515063, P.R. China}
\email{lijian09@mail.ustc.edu.cn}
\email{ynyangchs@foxmail.com}
\date{\today}
\subjclass[2010]{37B05, 37B25}
\keywords{Sensitivity, recurrent points, Furstenberg family, transitivity, sensitive tuples, maximal equicontinuous factor}
\thanks{The authors were supported by NNSF of China (12171298, 11871188) and NSF of Guangdong Province (2018B030306024).\\
\indent Y. Yang is the corresponding author.
}

\begin{abstract}
Let $(X,T)$ be a topological dynamical system, $n\geq 2$ and $\calf$ be a Furstenberg family of subsets of $\bbz_+$.
$(X,T)$ is called broken $\calf$-$n$-sensitive
if there exist $\delta>0$ and $F\in\calf$ such that for every opene (non-empty open) subset $U$ of $X$ and every $l\in\bbn$,
there exist $x_1^l,x_2^l,\dotsc,x_n^l\in U$ and $m_l\in \bbz_+$ satisfying
$d(T^k x_i^l, T^k x_j^l)> \delta,\ \forall 1\leq i<j\leq n, k\in m_l+ F\cap[1,l]$.
We investigate broken $\calf$-$n$-sensitivity for the family of all piecewise syndetic subsets ($\calf_{ps}$), the family of all positive upper Banach density subsets ($\calf_{pubd}$) and the family of all infinite subsets ($\calf_{inf}$).
We show that a transitive system $(X,T)$ is broken  $\calf$-$n$-sensitive for $\calf=\calf_{ps}\ \text{or}\ \calf_{pubd}$ if and only if there exists an essential $n$-sensitive tuple which is
an $\calf$-recurrent point of $(X^n, T^{(n)})$; is broken $\calf_{inf}$-$n$-sensitive
if and only if there exists an essential $n$-sensitive tuple $(x_1,x_2,\dotsc,x_n)$
such that
$\limsup_{k\to\infty}\min_{1\leq i<j\leq n}d(T^kx_i,T^kx_j)>0$.
We also obtain specific properties for them by analyzing  the factor maps to their maximal equicontinuous factors. Furthermore, we show examples to distinguish different kinds of broken family sensitivity.
\end{abstract}
\maketitle

\section{Introduction}
Throughout this paper, by a \textit{topological dynamical system}, we mean a pair $(X,T)$, where $X$
is a compact metric space with a  metric $d$ and $T\colon X\to X$ is a continuous surjective map. In the end of the 1970s, the well known notion of sensitivity was proposed by Ruelle \cite{Ruelle1977}, which shows that
small changes in initial conditions may leads to
great difference in the behavior of a system of its long time iterates. To be precise, a topological dynamical system
is called \emph{sensitive}
if there exists $\delta>0$ such that for every
opene (non-empty open) subset $U$, there exist $x,y\in U$ and $m\in\bbn$ such that $d(T^mx, T^my)>\delta$.

\medskip
A topological dynamical system $(X,T)$ is called \emph{equicontinuous} if for every $\varepsilon>0$
there is $\delta>0$ such that
whenever $x,y\in X$ with $d(x,y)<\delta$, $d(T^mx,T^my)<\varepsilon$ for any $m\in \bbn$. Roughly speaking, in an equicontinuous system if two points are close enough then they will  always be close under iterations. So equicontinuous systems have simple dynamical behaviors.
The  interesting
dichotomy theorem proved by Auslander and Yorke \cite{Auslander1980} is as follows: every minimal
system is either equicontinuous or sensitive.

\medskip
In order to investigate some dynamical properties and give explicit characterizations, the researchers
introduced some new kinds of sensitivity and obtain interesting results. In \cite{X05} Xiong introduced a stronger form of sensitivity named $n$-sensitivity.
Shao, Ye and Zhang \cite{SYZ,YZ08} studied $n$-sensitivity extensively, particularly for minimal systems. 
A non-diagonal $n$-tuple $(x_1, x_2,\cdots, x_n) \in X^n$ is called an \emph{$n$-sensitive tuple}
if for any $\eps>0$ and any opene subset $U$ of $X$,
there exist $y_1, y_2,\dotsc, y_n\in U$ and $m\in\bbn$ such that
$d(T^my_i,x_i)<\eps$ for every $i=1,2,\dotsc,n$.
If in addition $x_i\not=x_j$, for any $i\not=j$, 
$(x_1, x_2,\cdots, x_n)$ is called  an \emph{essential $n$-sensitive tuple}.
They show that a transitive system is $n$-sensitive if and only if there exists an essential $n$-sensitive tuple.
Huang, Lu and Ye \cite{HLY11} finally obtained the structure of $n$-sensitivity for minimal systems, more precisely, by the fibers of factor map to the maximal equicontinuous factor.
The notion of sensitivity was generalized by measuring the set of nonnegative
integers for which the sensitivity occurs \cite{HKKPZ16, HKZ18,  LTY15, Li-Yang1, Li-Yang2,LShi14, M07, TZ11, Y18}. We refer the reader to the survey \cite{Li-Ye} for related results.
Let $n\geq 2$.
A dynamical system $(X,T)$ is called \emph{blockily thickly $n$-sensitive}
if there is $\delta>0$
such that for every $k\in \bbn$ and every opene subset $U$ of $X$,
we can find $x_1^k,x_2^k,\dotsc,x_n^k\in U$ and $m_k\in\bbn$
such that
\[
\{m_k,m_k+1,\dotsc,m_k+k\}\subset
\Bigr\{ m\in\bbn\colon
\min_{1\le i<j\le n}d(T^m x_i^k,T^m x_j^k)>\delta\Bigr\}.
\]
In \cite{Y18} Ye and Yu showed that a minimal system is either blockily thickly sensitive or a proximal extension of its maximal equicontinuous factor.
In \cite{Zou17} Zou showed that a minimal system is
blockily thickly $n$-sensitive if and only if for the factor map to its maximal equicontinuous factor, there exist $n$ pairwise distinct points such that any two of them form a distal pair.

\medskip
We find that the previous results of sensitivity are based on the factor map to its maximal equicontinuous factor, which is closely related to the regional proximal relations in minimal systems.
So, a natural question arise:
does there exist some kinds of sensitivity which can be used in the
explicit characterizations of dynamical properties in transitive systems?

In this paper, we propose a new kind of sensitivity to give explicit characterizations of dynamical properties in transitive systems. Inspired by blockily thickly sensitivity we define the new kind of sensitivity as follows:

Let $(X,T)$ be a dynamical system, $n\geq 2$ and $\calf$ be a Furstenberg family of subsets of $\bbz_+$. 
We say that $(X,T)$ is broken  $\calf$-$n$-sensitive
if there exist $\delta>0$ and $F\in\calf$ with the property that for every opene subset $U$ of $X$ and every $l\in\bbn$,
there exist $x_1^{l},x_2^{l},\dotsc,x_n^{l}\in U$ and $m_{l}\in \bbz_+$ such that
\[
\min_{1\le i<j\le n}d(T^k x_i^{l}, T^k x_j^{l})> \delta,\ \forall k\in m_{l}+ F\cap[1,l].
\]
Here we remark that this kind of sensitivity is closely related to the
broken family. Recall that the broken family of $\calf$, denoted by $b\calf$ is as follows: $F\in b\calf$ if and only if there is a sequence $\{a_n\}_{n=1}^{\infty}$ in $\bbz_+$ and $F^{\prime}\in\calf$ such that
$\bigcup_{n=1}^{\infty}(a_n+F^{\prime}\cap[1,n])\subset F$. The idea of broken family
was first proposed in~\cite{Blokh02}
and the authors call the definition block family in \cite{HLY12}. We use ``broken family sensitivity" for this notion. In fact, following from the definitions
blockily thickly $n$-sensitivity is equivalent to broken  $\{\bbn\}$-$n$-sensitivity for any $n\in\bbn$. And in transitive systems, we prove that 
for any family $\calf$ with  $\{\mathbb{N}\}\subset \calf \subset \calf_{ps}$, broken $\calf$-$n$-sensitivity is equivalent to blockily thickly $n$-sensitivity. Because of this fact 
we choose $\calf_{ps}$, $\calf_{pubd}$ and $\calf_{inf}$ to consider  broken family sensitivity and obtain new results.
Moreover, we show some examples to distinguish different kinds of broken family sensitivity in both minimal and weakly mixng systems, which support
the view that the notion of broken family sensitivity is a natural one.

\medskip
We use essential sensitive tuples to 
give equivalent 
characterizations of these kinds of broken family sensitivity in transitive systems.
The following are the main results of this paper:

\begin{thmx}\label{thm:broken=}
	Let $(X,T)$ be a transitive system and $\calf=\calf_{ps}\ \text{or}\ \calf_{pubd}$.  Then $(X,T)$ is broken  $\calf$-$n$-sensitive if and only if there exists an essential $n$-sensitive tuple which is
	an $\calf$-recurrent point of $(X^n, T^{(n)})$.
\end{thmx}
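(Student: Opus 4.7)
For the direction $(\Leftarrow)$, suppose $(x_1,\dots,x_n)$ is an essential $n$-sensitive tuple and an $\calf$-recurrent point of $(X^n,T^{(n)})$. Let $\eta:=\tfrac15\min_{i<j}d(x_i,x_j)$, put $V:=\prod_i B(x_i,\eta)$, and set $F:=N((x_1,\dots,x_n),V)\in\calf$; one checks $d(T^kx_i,T^kx_j)>3\eta$ for every $k\in F$. For opene $U$ and $l\in\bbn$, uniform continuity of the finite family $T^0,T,\dots,T^l$ provides $\eps>0$ with $d(a,b)<\eps\Rightarrow d(T^ka,T^kb)<\eta$ for $0\le k\le l$, and the $n$-sensitivity of the tuple yields $y_1,\dots,y_n\in U$ and $m\in\bbn$ with $d(T^my_i,x_i)<\eps$; a triangle-inequality estimate then forces $d(T^{m+k}y_i,T^{m+k}y_j)>\eta$ for every $k\in F\cap[1,l]$, establishing broken $\calf$-$n$-sensitivity with $\delta:=\eta$.

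For the direction $(\Rightarrow)$, let $\delta,F$ witness broken $\calf$-$n$-sensitivity. Set $D_\delta:=\{(a_i)\in X^n:\min_{i<j}d(a_i,a_j)\ge\delta\}$ (closed), $E_U:=\overline{\bigcup_{m\ge0}T^{(n)m}(U^n)}$ for each opene $U\subset X$, and $S_n:=\bigcap_U E_U$; the latter is the closed $T^{(n)}$-forward-invariant set of $n$-sensitive tuples. The technical heart is a shifted-witness construction: given opene $U_1,\dots,U_k$, iterated transitivity of $(X,T)$ yields opene $W\subset X$ and $n_1,\dots,n_k\in\bbz_+$ with $T^{n_i}(W)\subset U_i$ for every $i$; setting $N:=\max_i n_i$ and applying broken $\calf$-$n$-sensitivity to $W$ with parameter $l$, the shifted witness $v_l:=T^{(n)(m_{W,l}+N)}(y_1^{W,l},\dots,y_n^{W,l})$ lies in $\bigcap_i E_{U_i}$ (via the decomposition $T^{(n)(m+N)}=T^{(n)(m+N-n_i)}\circ T^{(n)n_i}$ together with $T^{(n)n_i}(W^n)\subset U_i^n$) and satisfies $T^{(n)j}v_l\in\Int(D_\delta)$ for $j\in F'\cap[0,l-N]$, where $F':=(F-N)\cap\bbz_+\in\calf$ by the translation-invariance of $\calf_{ps}$ and $\calf_{pubd}$. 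Passing to a cluster point $v^*\in\bigcap_i E_{U_i}$ as $l\to\infty$, we obtain $N(v^*,D_\delta)\supset F'\in\calf$.

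The final step extracts an $\calf$-recurrent point in $D_\delta\cap S_n$. For $\calf_{ps}$, the classical lemma ``$N(z,A)\in\calf_{ps}$ for closed $A$ produces a uniformly recurrent point in $A\cap\overline{\orb(z,T^{(n)})}$'' applied to $v^*$ yields a uniformly recurrent $y_{\{U_i\}}\in D_\delta\cap\bigcap_i E_{U_i}$ whose orbit closure is a minimal subset of $\bigcap_i E_{U_i}$; a finite-intersection-property argument in the Hausdorff hyperspace on the closed conditions ``closed $T^{(n)}$-invariant subset contained in $E_U$ and meeting $D_\delta$'', combined with an enveloping-semigroup argument (using a minimal idempotent in the closure of $\{T^n:n\in F\}$, which exists because $F$ is piecewise syndetic), produces the required uniformly recurrent (hence $\calf_{ps}$-recurrent) point $y\in D_\delta\cap S_n$. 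For $\calf_{pubd}$, Banach averages of Dirac masses along the orbit of $v^*$ yield a $T^{(n)}$-invariant Borel probability measure $\mu_{\{U_i\}}$ supported in $\bigcap_i E_{U_i}$ with $\mu_{\{U_i\}}(D_\delta)\ge d^*(F)/2>0$; a weak-$*$ limit along a countable base for $X$ produces $\mu^{**}$ with $\mu^{**}(S_n)=1$ and $\mu^{**}(D_\delta)>0$, and the pointwise ergodic theorem applied to ergodic components delivers a $\calf_{pubd}$-recurrent point in $\supp(\mu^{**})\cap D_\delta\subset S_n$. I anticipate the main obstacle to be the $\calf_{ps}$ combining step, which demands a careful interplay between the Hausdorff-hyperspace FIP and the enveloping semigroup to ensure that the resulting minimal subset of $S_n$ actually meets $D_\delta$.
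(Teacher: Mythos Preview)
Your $(\Leftarrow)$ direction is essentially the paper's argument.

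Your $(\Rightarrow)$ direction, however, takes a substantially harder route than the paper's, and the ``combining step'' you yourself flag for $\calf_{ps}$ is a genuine gap that the sketch does not close: a Hausdorff limit of the minimal sets $M_{\{U_i\}}\subset\bigcap_i E_{U_i}$ need not be minimal, and nothing written forces a minimal subset of that limit to meet $D_\delta$. The appeal to ``a minimal idempotent in the closure of $\{T^n:n\in F\}$'' is too vague to do the job, and in any case your translate $F'=(F-N)\cap\bbz_+$ depends on the chosen finite family $\{U_i\}$ through $N=\max_i n_i$, so there is no single $F'$ to work with. (Your $\calf_{pubd}$ argument via invariant measures can be made to work, but also needs more care than indicated: one must pass to an ergodic component giving $D_\delta$ positive mass and then locate a generic point in $\supp(\nu)\cap\Int(D_{\delta/2})$.)

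The paper sidesteps all of this by exploiting a transitive point. Fix $x\in\tran(X,T)$ and let $U_m$ be neighbourhoods of $x$ with $\diam(U_m)<1/m$. Broken $\calf$-$n$-sensitivity gives $\delta>0$, $F\in\calf$, and for each $m$ witnesses $x_1^m,\dots,x_n^m\in U_m$, $p_m\in\bbn$ with $\min_{i<j}d(T^k x_i^m,T^k x_j^m)>\delta$ for $k\in p_m+F\cap[1,m]$. Passing to a subsequence, $T^{p_m}x_i^m\to x_i$. Two things now come for free: (i) $x\in L(x_1,\dots,x_n)$, so $(x_1,\dots,x_n)$ is already a sensitive tuple, since $L$ is closed, $T$-invariant and contains the transitive point; (ii) for every fixed $k\in F$ one eventually has $k\le m$, so in the limit $d(T^k x_i,T^k x_j)\ge\delta$ for \emph{all} $k\in F$---separation holds along the original $F$, with no translation. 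Now a single application of Lemma~\ref{cor:calf-recurrent} to $\overline{(T^{(n)})^F(x_1,\dots,x_n)}$, which is contained both in $D_\delta$ and in the closed $T^{(n)}$-invariant set $S_n(X,T)\cup\Delta_n(X)$, produces an $\calf$-recurrent point that is automatically an essential $n$-sensitive tuple. No FIP, no hyperspace, no enveloping semigroup, no invariant measures.

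The moral: your $\bigcap_U E_U$ equals $S_n(X,T)\cup\Delta_n(X)$, and a single transitive point already certifies membership in every $E_U$ simultaneously; there is no need to approximate over finite subfamilies and then recombine.
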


\begin{thmx}\label{thm:inf}
	Let $(X,T)$ be a transitive system and $n\geq 2$.
	Then  $(X,T)$ is broken $\calf_{inf}$-$n$-sensitive
	if and only if there exists an essential $n$-sensitive tuple $(x_1,x_2,\dotsc,x_n)$
	in $(X,T)$ such that
	\[\limsup_{k\to\infty}\min_{1\leq i<j\leq n}d(T^kx_i,T^kx_j)>0. \]	
\end{thmx}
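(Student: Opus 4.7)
The plan is to prove the two implications separately, using the essential $n$-sensitive tuple as the pivot.

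For sufficiency ($\Leftarrow$), starting from an essential $n$-sensitive tuple $(x_1, \ldots, x_n)$ with $\eta := \limsup_{k\to\infty} \min_{i<j} d(T^k x_i, T^k x_j) > 0$, I set $\delta := \eta/3$ and $F := \{k \in \bbz_+ : \min_{i<j} d(T^k x_i, T^k x_j) > 2\delta\}$. Then $F$ is infinite by the $\limsup$ bound, so $F \in \calf_{inf}$. For any opene $U$ and $l \in \bbn$, uniform continuity of $T^k$ over $k \in F \cap [1,l]$ provides $\eps > 0$ such that $\eps$-close points have $\delta/2$-close images under each such $T^k$. The $n$-sensitive tuple property applied to $U$ and $\eps$ yields $y_1, \ldots, y_n \in U$ and $m \in \bbn$ with $d(T^m y_i, x_i) < \eps$, and at each $k \in F \cap [1,l]$ the triangle inequality gives $d(T^{m+k} y_i, T^{m+k} y_j) > 2\delta - \delta/2 - \delta/2 = \delta$, establishing broken $\calf_{inf}$-$n$-sensitivity with $x_i^l := y_i$ and $m_l := m$.

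For necessity ($\Rightarrow$), with $\delta > 0$ and infinite $F \in \calf_{inf}$ given by the hypothesis, the strategy is to show $E_F \cap \Sigma_n \neq \emptyset$, where $\Sigma_n$ denotes the closed set of $n$-sensitive tuples and $E_F := \{(y_1,\ldots,y_n) \in X^n : \min_{i<j} d(T^k y_i, T^k y_j) \geq \delta \text{ for all } k \in F\}$. Any point of this intersection is automatically essential (separation at a single $k \in F$ forces pairwise distinctness) and has $\limsup \geq \delta$, giving the required tuple. Fixing a countable base $\{V_r\}_{r \geq 1}$ of opene subsets of $X$, one has $\Sigma_n = \bigcap_r A(V_r)$, where $A(V) := \overline{\bigcup_{m \in \bbn} (T^m V)^n}$.

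The key step is verifying the finite intersection property: for every finite collection $V_{r_1},\ldots,V_{r_p}$, the closed set $E_F \cap \bigcap_{i \leq p} A(V_{r_i})$ is nonempty. Using transitivity of $(X,T)$, I choose a transitive point $w \in V_{r_p}$ and times $k_1,\ldots,k_{p-1}$ with $T^{k_i} w \in V_{r_i}$; continuity then yields an opene neighborhood $W$ of $w$ inside $V_{r_p}$ satisfying $T^{k_i} W \subset V_{r_i}$ for each $i < p$. These inclusions imply $A(W) \subset \bigcap_{i \leq p} A(V_{r_i})$. Applying broken $\calf_{inf}$-$n$-sensitivity to $W$ produces tuples in $W^n$ whose $T^{m_l}$-iterates lie in $A(W)$ and have pairwise separation $> \delta$ on $F \cap [1,l]$; a compactness extraction as $l \to \infty$ yields a tuple in the closed set $A(W) \cap E_F$, witnessing the required intersection.

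Finite intersection property together with compactness of $X^n$ then delivers $E_F \cap \Sigma_n = \bigcap_r (E_F \cap A(V_r)) \neq \emptyset$, furnishing the desired essential $n$-sensitive tuple with $\limsup \geq \delta$. The main obstacle is the transitivity-based construction of $W$, which reduces the intersection of many $A(V_{r_i})$'s to the single set $A(W)$; once this reduction is in place, broken $\calf_{inf}$-$n$-sensitivity directly supplies a witness in $A(W) \cap E_F$, and the remaining compactness arguments are routine.
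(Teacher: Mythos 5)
Your sufficiency direction is correct and is essentially the paper's own argument (the paper shrinks neighbourhoods $U_i^m$ of the $x_i$ so that images stay $2\delta$-separated along $\{k_1,\dotsc,k_m\}$; your uniform-continuity-plus-triangle-inequality version is the same computation). The problem is in the necessity direction, at the sentence ``These inclusions imply $A(W)\subset\bigcap_{i\le p}A(V_{r_i})$.'' From $T^{k_i}W\subset V_{r_i}$ you only get $(T^mW)^n\subset (T^{m-k_i}V_{r_i})^n$ when $m>k_i$; nothing is said about the part of $A(W)$ coming from powers $m\le k_i$, so the inclusion is unjustified (and false in general: $A(V)$ is a proper subset of $X^n$ unless the system is weakly mixing, so membership in it cannot be had for free). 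This matters because the witnesses supplied by broken $\calf_{inf}$-$n$-sensitivity applied to $W$ come with shifts $m_l\in\bbz_+$ that you do not control: the definition allows $m_l=0$ for every $l$, or $m_l$ bounded by some constant below $\max_i k_i$. In that case your compactness extraction only produces a tuple in $\overline{W}^n\cap E_F$ (or the image of such a tuple under a small fixed power of $T^{(n)}$), and there is no argument that this tuple lies in $A(V_{r_i})$ for $i<p$. So the finite intersection property, the heart of your proof, is not established, and I do not see a cheap way to force the $m_l$ to exceed $\max_i k_i$ for a fixed opene $W$. (A smaller point: $S_n(X,T)$ itself need not be closed, only $S_n(X,T)\cup\Delta_n(X)$ is; this is harmless for you because $E_F$ avoids tuples with repeated coordinates, but it should be said.)

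The paper's proof sidesteps exactly this trap by anchoring the construction at a single transitive point $x$ rather than at a fixed opene set: it applies the definition to neighbourhoods $U_m\ni x$ with $\diam(U_m)<1/m$, sets $x_i=\lim T^{p_m}x_i^m$, and concludes that $x\in L(x_1,\dotsc,x_n)$ (here a bounded subsequence of the $p_m$ is harmless: with shrinking $U_m$ it would force the limit tuple onto the diagonal, contradicting the $\delta$-separation along $F$). Since $L(x_1,\dotsc,x_n)$ is closed and $T$-invariant and contains the transitive point $x$, it equals $X$, and that is precisely the statement that $(x_1,\dotsc,x_n)$ is a sensitive tuple. In other words, the paper never needs to hit each basic open set $V_r$ with large shifts; the invariance of $L$ transports the property from the shrinking neighbourhoods of $x$ to every opene set. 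That transport mechanism is the idea missing from your argument; to repair your proof you would either have to prove that the shifts $m_l$ can always be taken larger than any prescribed bound for every opene $W$ (which does not follow from the definition), or replace the finite-intersection step by the shrinking-neighbourhood/$L(\cdot)$ argument, which is the paper's route.
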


Using these characterizations we show that a minimal system $(X,T)$ is broken $\calf_{inf}$-sensitive
if and only if it is not an asymptotic extension of its maximal equicontinuous factor; is broken $\calf_{pubd}$-sensitive if and only if it is not a Banach proximal extension to its maximal equicontinuous factor; is broken $\calf_{ps}$-sensitive if and only if it is not a proximal extension to its maximal equicontinuous factor. 

The paper is organized as follows.
In Section $2$, we recall some definitions and
related results which will be used later. In Section $3$,
we define broken $\calf$-$n$-sensitivity, investigate
broken $\calf$-$n$-sensitivity for
$\calf=\calf_{inf}, \calf_{ps}\ \text{and}\ \calf_{pubd}$ in transitive systems and prove the main results, besides, some specific properties of them and examples in minimal systems are given in this section. In Section $4$, we consider 
broken $\calf$-$n$-sensitivity for
$\calf=\calf_{inf}, \calf_{ps}\ \text{and}\ \calf_{pubd}$ in weakly mixing systems and give some examples.

\section{Preliminaries}
\subsection{Topological dynamical systems}
In the article, sets of integers, nonnegative integers and
natural numbers are denoted by $\mathbb{Z},\ \mathbb{Z}_+$ and $\mathbb{N}$ respectively.
By a \emph{topological dynamical system}, we mean a pair $(X,T)$, where $X$
is a compact metric space with a  metric $d$ and $T\colon X\to X$ is a continuous surjective map.
Write $(X^n, T^{(n)})$ for the $n$-fold product system $(X \times\dotsb\times X, T \times\dotsb\times T)$, 
set $\Delta_n(X)=\{(x,\dotsc, x)\in X^n: x\in X\}$
and $\Delta^{(n)}(X)=\{(x_1,\dotsc, x_n)\in X^n: \exists 1\leq i\not=j\leq n \ \text{such that}\ x_i=x_j\}$.
A nonempty closed
invariant subset $Y\subseteq X$ defines naturally a subsystem $(Y, T)$ of $(X, T)$.
A system $(X,T)$ is called \emph{minimal} if it contains no proper subsystem.
Each point belonging to some minimal subsystem of $(X,T)$ is called a \emph{minimal point}.
\emph{The orbit of a point $x\in X$} is the set $Orb(x,T)=\{T^nx:\ n\in\Z_+\}$.

For $x\in X$ and $U,V\subset X$, put
$$N(x,U)=\{n\in\Z_+: T^nx\in U\}\ \text{and}\ N(U,V)=\{n\in\Z_+: U\cap T^{-n}V\neq\emptyset\}.$$
Recall that a dynamical system $(X,T)$ is called \emph{topologically transitive}
(or just \emph{transitive}) if for every two opene subsets $U,V$ of $X$,
the set $N(U,V)$ is infinite.  Any point with dense orbit is called a \emph{transitive point}.
Denote the set of all transitive points by $\tran(X,T)$.
It is well known that for a transitive system, $\tran(X,T)$ is a dense $G_\delta$ subset of $X$. A system $(X,T)$ is called \emph{weakly mixing} if the product system $(X^2, T^{(2)})$
is transitive.
We say that a transitive system $(X,T)$ is an $M$-system if the system has a dense set of minimal points;
a transitive system $(X,T)$ is an $E$-system if there exists a $T$-invariant probability measure $\mu$ on $X$ with $\mu(U)>0$ for any  
 opene set $U\subset X$.

We need some classical results.

\begin{lem}\cite[Proposition 2.3]{F67}\label{lem:weakly-mixing-n}
If $(X,T)$ is a weakly mixing system, then for any $n\geq 2$, the $n$-fold product system $(X^{n}, T^{(n)})$ is transitive.
\end{lem}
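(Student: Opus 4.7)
This is Furstenberg's Proposition II.3 from \cite{F67}, invoked here rather than proved. Were I to reproduce the argument from scratch, the plan is induction on $n$, with the base case $n=2$ being exactly the definition of weak mixing, so the substance lies in the inductive step.

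The cleanest approach strengthens the statement and proves along the way that $(X^n, T^{(n)})$ is itself weakly mixing for every $n \geq 2$, so the induction feeds itself. The workhorse is the intersection characterization of weak mixing: if $(X,T)$ is weakly mixing, then for every $k \geq 2$ and every collection of opene sets $U_1, V_1, \ldots, U_k, V_k \subseteq X$, the intersection $\bigcap_{i=1}^{k} N(U_i, V_i)$ is nonempty. Given this, transitivity of $(X^n, T^{(n)})$ is immediate: a common hitting time for $U_1, V_1, \ldots, U_n, V_n$ witnesses that $(T^{(n)})^m (U_1 \times \cdots \times U_n) \cap (V_1 \times \cdots \times V_n) \neq \emptyset$. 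The characterization itself is proved by an inner induction on $k$: given a common hitting time $m$ for the first $k-1$ pairs, continuity of $T^m$ yields opene refinements $W_i \subseteq U_i$ with $T^m(W_i) \subseteq V_i$; applying the characterization at a lower level to pairs built from the $W_i$ and the new pair $(U_k, V_k)$ produces a further time $\ell$, and the composition $m + \ell$ then satisfies all $k$ conditions through the chain $T^{m+\ell}(x) \in T^m(W_i) \subseteq V_i$ for any $x \in W_i \cap T^{-\ell}(W_i)$, together with a symmetric argument for the new pair.

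The main obstacle is precisely this compositional bookkeeping in the inner induction, since each newly selected hitting time must preserve all the hitting conditions already established at the previous level. The device that resolves it is continuity of the iterates $T^m$, which permits the opene refinements $W_i$ to be shrunk freely while retaining the constraint $T^m(W_i) \subseteq V_i$ intact; weak mixing can then be applied to the refined sets on equal footing with the new pair $(U_k, V_k)$. An alternative bookkeeping-free route is to observe that once one knows $(X^{2^r}, T^{(2^r)})$ is weakly mixing, the system $(X^n, T^{(n)})$ for $n \leq 2^r$ appears as a topological factor via coordinate projection, and factors of transitive systems are transitive; the burden then shifts to proving weak mixing at the doubled level, which is handled by the same continuity-plus-refinement idea.
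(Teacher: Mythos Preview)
You are right that the paper does not prove this lemma: it is quoted with a citation to Furstenberg and used as a black box, so there is nothing to compare against. Your supplementary sketch is the standard Furstenberg argument via the intersection property of hitting-time sets. One small bookkeeping point: in your inner induction, the step ``applying the characterization at a lower level to pairs built from the $W_i$ and the new pair $(U_k,V_k)$'' still produces $k$ conditions (the $k-1$ return conditions $\ell\in N(W_i,W_i)$ together with the condition for the $k$-th pair), so the level does not literally drop; the usual clean formulation proves the stronger statement that every finite intersection $\bigcap_{i=1}^{k} N(U_i,V_i)$ \emph{contains} some $N(U,V)$, whose inductive step then only invokes the case $k=2$.
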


\begin{lem}\cite[Lemma 2.8]{Ethan-01}\label{lem: XY-minimal-points}
	Let $(X,T)$ and $(Y,S)$ be two dynamical systems with dense minimal points. Then the set of minimal points in $(X\times Y,T\times S)$ is dense.
\end{lem}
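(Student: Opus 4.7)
The plan is to exhibit, inside an arbitrary basic opene set $U\times V\subset X\times Y$, a minimal point of $(X\times Y,T\times S)$, by producing a point that is fixed by a minimal idempotent of $\beta\Zp$; such a point is automatically a minimal point of the product system by the standard characterisation of uniform recurrence via idempotent ultrafilters.

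First I would use density of minimal points in $X$ to pick a minimal point $x_0\in U$; the Ellis--Auslander theorem then supplies a minimal idempotent $u\in\beta\Zp$ with $u\cdot x_0=x_0$. Next, using density of minimal points in $Y$, pick a minimal point $y_0\in V$ and let $M_Y=\overline{\orb(y_0,S)}$ be the minimal subsystem it generates. The subset $u\cdot M_Y=\{u\cdot y:y\in M_Y\}$ is nonempty and $S$-invariant (because $u$ commutes with $S$ in the action on $Y$), so its closure is a nonempty closed $S$-invariant subset of the minimal system $M_Y$, hence equals $M_Y$. Consequently the relatively open subset $V\cap M_Y$ of $M_Y$ (nonempty since $y_0\in V\cap M_Y$) meets $u\cdot M_Y$, yielding some $y_1\in M_Y$ with $u\cdot y_1\in V$.

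Finally, set $(x,y)=(x_0,u\cdot y_1)\in U\times V$. A direct computation using $u^2=u$ gives
\[
u\cdot(x,y)=(u\cdot x_0,\ u^2\cdot y_1)=(x_0,\ u\cdot y_1)=(x,y),
\]
so $(x,y)$ is fixed by the minimal idempotent $u$ acting on $X\times Y$ and is therefore a minimal point of $(X\times Y,T\times S)$ contained in $U\times V$.

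The principal obstacle is realising that the same minimal idempotent $u\in\beta\Zp$ can be used simultaneously on both factors: it pins down the preassigned minimal point $x_0$ on the first factor and, on the second factor, projects $M_Y$ onto a dense subset of itself. A more pedestrian alternative would be to replay an Auslander--Ellis proximality argument in the Ellis semigroup of $X\times Y$ directly, but the $\beta\Zp$ formulation makes the simultaneous compatibility transparent.
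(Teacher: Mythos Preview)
Your argument is correct. The paper itself does not supply a proof of this lemma---it merely quotes the result from Akin and Glasner---so there is no ``paper's own proof'' to compare against here. Your use of a minimal idempotent $u\in\beta\Zp$, fixing a prescribed minimal point on the first coordinate and then exploiting that $u\cdot M_Y$ is $S$-invariant (hence dense in the minimal set $M_Y$) to land in $V$ on the second coordinate, is exactly the kind of Ellis--Auslander reasoning that underlies the cited source, and each step is sound. The only place worth a second glance is the density of $u\cdot M_Y$ in $M_Y$: you correctly observe that $S$-invariance of the image plus minimality of $M_Y$ forces the closure to be all of $M_Y$, so the intersection with the relatively open set $V\cap M_Y$ is guaranteed.
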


\subsection{Furstenberg family}\label{section-family}
For the set of non-negative
integers $\bbz_+$,
denote by $\calp=\calp(\bbz_+)$ the collection of all subsets of $\bbz_+$. A subset $\calf$ of $\calp$ is called a \emph{Furstenberg family} (or just \emph{family}) if it is hereditary upward, i.e., $F_1\subset F_2$ and $F_1\in\calf$ imply
$F_2\in\calf$. A family $\calf$ is called \emph{proper} if it is
neither empty nor all of $\calp$. If a proper family $\calf$ is closed under finite intersection, then $\calf$ is called a \emph{filter}.
Recall that a subset $F$ of $\mathbb{Z}_{+}$ is
\begin{enumerate}
	\item \emph{infinite} if it is an infinite subsets of $\bbz_+$, and
	denote by
	$\calf_{inf}$ the family of all infinite subsets of $\bbz_+$;
	\item \emph{thick} if it contains arbitrarily long blocks of consecutive integers, that is,
	for every $l\geq 1$ there is $n_l\in\N$ such that $\{n_l,n_l+1,\dotsc,n_l+l\}\subset F$, and denote by $\calf_{t}$ the family of all thick subsets of $\bbz_+$;
	\item \emph{syndetic} if it has bounded gaps, that is, there exists $N\in\N$ such that for every $k\in\N$ we have
	$\{k,k+1,\dotsc,k+N\}\cap F\neq\emptyset$, and denote by $\calf_{s}$ the family of all syndetic subsets of $\bbz_+$;
	\item \emph{piecewise syndetic} if
	there exist a thick set $A$ and a syndetic set $B$ such that $F=A\cap B$, and denote by $\calf_{ps}$ the family of all piecewise syndetic subsets of $\bbz_+$.	
\end{enumerate}

Let $F$ be a subset of $\bbz_+$, the upper Banach density of $F$ is
\[\overline{BD}(F)=\limsup_{n-m\to\infty}\frac{|F\cap\{m,m+1,\dotsc,n-1\}|}{n-m},\]
the lower Banach density of $F$ is
\[\underline{BD}(F)=\liminf_{n-m\to\infty}\frac{|F\cap\{m,m+1,\dotsc,n-1\}|}{n-m},\]
where $|\cdot|$ denote the cardinality of the set. A subset $F$ of $\mathbb{Z}_{+}$ is called \emph{positive upper Banach density} if $\overline{BD}(F)>0$, and denote by $\calf_{pubd}$ the family of all positive upper Banach density subsets of $\bbz_+$.
A subset $F$ of $\mathbb{Z}_{+}$ is called \emph{lower Banach density one} if $\underline{BD}(F)=1$, and denote by $\calf_{lbd1}$ the family of all lower Banach density one subsets of $\bbz_+$.

Let $\calf$ be a family and $(X,T)$ be a system. A point $x\in X$ is called 
\emph{$\calf$-recurrent point} if for any neighbourhood $U$ of $x$,  $N(x,U)=\{n\in\bbz_+:T^n x\in U\}\in\calf$.
Now we introduce a property of the orbit of a point along $F\in\calf=\calf_{ps}\ \text{or}\ \calf_{pubd}$ as follows.

\begin{lem}\label{cor:calf-recurrent}\cite[Proposition 5.1\ \text{and}\ Remark 5.7]{L12}
For any dynamical system $(X,T)$ and $x\in X$, $\overline{T^F(x)}=\overline{\{T^n x:n\in F\}}$ has
\begin{enumerate}
\item  an $\calf_{pubd}$-recurrent point, if $F\in \calf_{pubd}$.
\item  an $\calf_{s}$-recurrent point, if $F\in \calf_{ps}$.
\end{enumerate}
\end{lem}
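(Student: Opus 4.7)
The plan is to handle the two parts separately, using two standard compactness constructions.

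For part (1), I would build a $T$-invariant probability measure whose support meets $\overline{T^F(x)}$ and then invoke the generic-points principle to extract an $\calf_{pubd}$-recurrent point. By definition of positive upper Banach density, one can pick intervals $[a_k,b_k)$ with $b_k-a_k\to\infty$ and $|F\cap[a_k,b_k)|/(b_k-a_k)\to\overline{BD}(F)>0$. Form the empirical measures $\nu_k:=\frac{1}{b_k-a_k}\sum_{n=a_k}^{b_k-1}\delta_{T^n x}$; any weak-$*$ accumulation point $\nu$ is $T$-invariant by the standard telescoping argument, since $\|\nu_k-T_*\nu_k\|\le 2/(b_k-a_k)\to 0$. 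Because $\overline{T^F(x)}$ is closed and already contains every atom $T^n x$ with $n\in F$, the portmanteau theorem yields $\nu(\overline{T^F(x)})\ge\limsup_k\nu_k(\overline{T^F(x)})\ge\overline{BD}(F)>0$. It is well known that $\calf_{pubd}$-recurrent points have full measure for any invariant measure: via the ergodic decomposition, $\nu$-a.e.\ $y$ is generic for an ergodic component $\mu_y$ with $y\in\supp(\mu_y)$, so the pointwise ergodic theorem makes $N(y,U)$ have positive lower density, hence positive upper Banach density, for every neighbourhood $U$ of $y$. Hence $\overline{T^F(x)}$ intersects the $\calf_{pubd}$-recurrent points in a set of positive $\nu$-measure.

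For part (2), I would produce a minimal point inside $\overline{T^F(x)}$ by a diagonal limit along translates of $F$. Write $F=S\cap A$ with $S$ an $N$-syndetic set and $A$ thick, and for each $l\in\bbn$ choose $n_l\in S$ with $[n_l,n_l+l]\subset A$ (possible after adjusting the left endpoint by at most $N$); then $n_l\in F$. Pass to a subsequence along which $T^{n_l}x\to y$ and for each fixed $m\in\bbz_+$ the indicator of $m$ in $(F-n_l)\cap[0,l]$ stabilises, defining $F_\infty\subset\bbz_+$ with $0\in F_\infty$ that inherits $N$-syndeticity. Continuity of $T$ then gives $\{T^m y:m\in F_\infty\}\subset\overline{T^F(x)}$. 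Now let $M$ be a minimal subsystem of $\overline{\orb(y,T)}$ and pick $z\in M$, say $T^{k_j}y\to z$. For each $j$, choose $m_j\in F_\infty$ with $0\le m_j-k_j\le N$; after refining so $m_j-k_j=r$ is constant, $T^{m_j}y\to T^r z$, so $T^r z\in M\cap\overline{\{T^m y:m\in F_\infty\}}\subset M\cap\overline{T^F(x)}$. Since $T^r z$ lies in the minimal set $M$ it is a minimal point, equivalently $\calf_s$-recurrent.

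The main obstacle I anticipate is in part (2): the set $\overline{T^F(x)}$ is in general not $T$-invariant, so one cannot simply extract a minimal subsystem from it directly. The $N$-syndeticity of $F_\infty$ is exactly the ingredient that allows a uniformly bounded iterate $T^r$ to push a limit point of the orbit of $y$ back into the sampled closure; without this arithmetic regularity the limit $y$ itself need not have any iterate landing in $\overline{T^F(x)}$. In part (1) the comparable technical point is milder: $\nu$ is assembled from uniform averages over full blocks rather than over $F$, but the elementary inequality $\nu_k(\overline{T^F(x)})\ge|F\cap[a_k,b_k)|/(b_k-a_k)$ already ensures enough charge on $\overline{T^F(x)}$.
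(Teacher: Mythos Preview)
Your argument is correct in both parts. For part~(1) the measure-theoretic construction is standard and sound; the only place to tighten is the line ``$\nu$-a.e.\ $y$ is generic for an ergodic component $\mu_y$ with $y\in\supp(\mu_y)$'': it is cleanest to argue for an ergodic $\mu$ that $\mu(\supp(\mu))=1$ and that, by Birkhoff's theorem applied over a countable base, $\mu$-a.e.\ $y\in\supp(\mu)$ has $N(y,U)$ of positive lower density for every neighbourhood $U$, then pass to general $\nu$ by ergodic decomposition. For part~(2) the diagonal/limit construction of $F_\infty$ and the bounded-shift trick via $N$-syndeticity are exactly the right mechanism to land a minimal point inside the non-invariant set $\overline{T^F(x)}$; every step checks out.

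As for comparison with the paper: this lemma is not proved here but is quoted from \cite[Proposition~5.1 and Remark~5.7]{L12}, so there is no in-paper argument to match against. Your approach is essentially the classical one underlying that reference: part~(1) is the Krylov--Bogolyubov construction combined with the fact that the set of $\calf_{pubd}$-recurrent points carries every invariant measure, and part~(2) is the standard reduction of piecewise syndetic return structure to a genuine minimal point via a syndetic limit set. Nothing further is needed.
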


In the next lemma we give an equivalent condition for a point to be the unique $\calf_{pubd}$-recurrent point of a system.

\begin{lem}\label{lem:only-recurrent}
	Let $(X,T)$ be a dynamical system and $x_0\in X$. Then $x_0$ is the unique $\calf_{pubd}$-recurrent point of $(X,T)$ if and only if for any $x\in X$ and neighbourhood $U$ of $x_0$, $N(x,U)\in \calf_{lbd1}$.
\end{lem}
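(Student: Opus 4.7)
The plan is to work entirely with the duality that $F \in \calf_{lbd1}$ if and only if $\bbz_+ \setminus F$ has zero upper Banach density, combined with a single application of Lemma~\ref{cor:calf-recurrent}(1).

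For the sufficiency direction, I would assume $N(x,U) \in \calf_{lbd1}$ for every $x \in X$ and every open neighborhood $U$ of $x_0$. Specializing to $x = x_0$ gives $\underline{BD}(N(x_0,U)) = 1 > 0$, so $x_0$ is itself an $\calf_{pubd}$-recurrent point. For uniqueness, suppose $y \ne x_0$ were another $\calf_{pubd}$-recurrent point. I would choose disjoint open neighborhoods $U$ of $x_0$ and $V$ of $y$; then $N(y,V)$ and $N(y,U)$ are disjoint, so $N(y,V) \subset \bbz_+ \setminus N(y,U)$. The hypothesis gives $\overline{BD}(\bbz_+ \setminus N(y,U)) = 1 - \underline{BD}(N(y,U)) = 0$, hence $\overline{BD}(N(y,V)) = 0$, contradicting the $\calf_{pubd}$-recurrence of $y$ at the neighborhood $V$.

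For the necessity direction, I would argue by contradiction: suppose $x_0$ is the unique $\calf_{pubd}$-recurrent point but there exist some $x \in X$ and an open neighborhood $U$ of $x_0$ with $N(x,U) \notin \calf_{lbd1}$. Then $F := \bbz_+ \setminus N(x,U) = \{n \in \bbz_+ : T^n x \notin U\}$ satisfies $\overline{BD}(F) > 0$, so $F \in \calf_{pubd}$. Since $U$ is open, $X \setminus U$ is closed, and the inclusion $\{T^n x : n \in F\} \subset X \setminus U$ gives $\overline{T^F(x)} \subset X \setminus U$. Applying Lemma~\ref{cor:calf-recurrent}(1) to $F$ and $x$ yields an $\calf_{pubd}$-recurrent point $y \in \overline{T^F(x)}$, and $y \in X \setminus U$ forces $y \ne x_0$, contradicting uniqueness.

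No step presents a serious obstacle; the argument is driven entirely by the complement-duality between $\calf_{pubd}$ and $\calf_{lbd1}$ together with the fact that Lemma~\ref{cor:calf-recurrent}(1) places the recurrent point it produces inside the closure of the $F$-orbit. The only point requiring a little care is making sure the neighborhood $U$ is taken open so that $X \setminus U$ is closed; this guarantees the recurrent point produced in the necessity direction lies in $X \setminus U$ and hence differs from $x_0$, with no need to shrink $U$ further.
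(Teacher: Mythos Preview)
Your proposal is correct and follows essentially the same route as the paper. The necessity direction is virtually identical to the paper's: both set $F=\bbz_+\setminus N(x,U)\in\calf_{pubd}$ and invoke Lemma~\ref{cor:calf-recurrent}(1) to produce an $\calf_{pubd}$-recurrent point in $\overline{T^F(x)}\subset X\setminus U$. For sufficiency the paper argues that any $\calf_{pubd}$-recurrent point $x$ must coincide with $x_0$ by showing every pair of neighbourhoods of $x$ and $x_0$ meet (since $N(x,U)\in\calf_{pubd}$ and $N(x,V)\in\calf_{lbd1}$ forces $N(x,U)\cap N(x,V)\neq\emptyset$), whereas you take disjoint neighbourhoods and derive $\overline{BD}(N(y,V))=0$; these are the same duality observation phrased contrapositively, and your version has the minor advantage of explicitly verifying that $x_0$ itself is $\calf_{pubd}$-recurrent, a point the paper leaves implicit.
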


\begin{proof}
	$(\Leftarrow)$ 
	Assume that there exists a $\calf_{pubd}$-recurrent point $x\in X$ with $x\not=x_0$, then for any neighbourhood $U$ of $x$ and $V$ of $x_0$, $N(x,U)\in \calf_{pubd}$ and $N(x,V)\in \calf_{lbd1}$, thus $N(x,U)\cap N(x,V)\not=\emptyset$. Since $U$ and $V$ are arbitrary, $x=x_0$.
	
	$(\Rightarrow)$ 
	Assume there exists $x\in X$ and a neighbourhood $V$ of $x_0$ such that 
	$N(x,V)\not\in \calf_{lbd1}$, 
	then $F:=N(x,X\setminus V)\in \calf_{pubd}$, by Lemma~\ref{cor:calf-recurrent}, $\overline{\{T^n x:n\in F\}}$ has a $\calf_{pubd}$-recurrent point. It is clear
	that $x_0\not\in \overline{\{T^n x:n\in F\}}$, which is a contradiction.
\end{proof}

\subsection{Sensitive tuples and regionally proximal tuples}\label{subsection-proximal}
 Let $(X,T)$ be a dynamical system and $n\geq 2$.
 We call \emph{a tuple $(x_1,x_2,\dotsc,x_n)\in X^n\setminus \Delta_n(X)$  sensitive},
 if for every opene subset $U$ of $X$, open neighborhoods $U_i$ of $x_i$ for $i=1,2,\dotsc,n$, there exist $y_1,y_2,\dotsc,y_n\in U$
 and $m\in\bbn$ such that
 \[
 T^m y_i\in U_i,\ \forall i=1,2,\dotsc,n.
 \]
 If in addition $x_i\neq x_j$ for $i\neq j$, we call $(x_1,x_2,\dotsc,x_n)$ \emph{an essential $n$-sensitive tuple}. Denote $S_n(X,T)$ as the collection of all $n$-sensitive tuples in $(X,T)$ and $S_n^{e}(X,T)$ the collection of all essential $n$-sensitive tuples in $(X,T)$. It is easy to see that $S_n(X,T)\cup \Delta_n(X)$ is closed and $T^{(n)}$-invariant.
 Fix $n\geq 2$, $(x_1,x_2,\dotsc, x_n)\in X^n$ and $y\in X$. 
 If for any neighborhoods $U_i$ of $x_i$ and $V$ of $y$, there exist $y_1, y_2,\dotsc, y_n\in V$ and $m\in \mathbb{N}$ such that
 $T^my_i\in U_i$, then we denote
 \emph{$y\in L(x_1,x_2,\dotsc, x_n)$}. Note that $L(x_1,x_2,\dotsc,x_n)$ is closed and $T$-invariant.  
 For transitive system $(X,T)$ and $x\in \tran(X,T)$, $(x_1,x_2,\dotsc,x_n)\in X^n\setminus \Delta_n(X)$ is a sensitive tuple if and only if 
 $x\in L(x_1,x_2,\dotsc,x_n)$.

 A pair  $(x_1,x_2)\in X\times X$ is said to be 
 \begin{enumerate}
 \item \emph{asymptotic} if $\lim_{m\to\infty}d(T^mx_1,T^mx_2)=0$;
\item \emph{Banach proximal} if for every $\eps>0$, $d(T^mx_1,T^mx_2)<\eps$ for all $m\in \bbz_+$ except a set of zero Banach density;
\item  \emph{proximal} if for any $\ep>0$, there exists a positive integer $m$ such that $d(T^mx_1,T^mx_2)<\ep$; 
 \item \emph{regionally proximal} if for each $\ep> 0$ and each open neighborhood $U_i$ of $x_i$, $i = 1, 2$,
 there are $x_i'\in U_i, i = 1, 2$, and $m\in \N$ with $d(T^mx_1', T^mx_2')<\ep$.
 \end{enumerate}
 
 Let $AS(X,T)$ (resp. $BP(X,T)$, $P(X,T), Q(X,T)$) denote the collection of all asymptotic pairs 
 (resp. Banach proximal pairs, proximal pairs, regionally proximal pairs) in $(X,T)$.
 Apparently, $AS(X,T)\subset BP(X,T)\subset P(X,T)\subset Q(X,T)$.
Note that $Q(X,T)$ is a reflexive symmetric $T\times T$-invariant closed relation, but is in general not transitive.
 However for minimal system $(X,T)$, $Q(X,T)$ is a closed invariant equivalence relation. Denote $Q(X,T)[x]=\{y\in X: (x,y)\in Q(X,T)\}$

 Generally, a tuple $(x_1,x_2,\dotsc,x_n)$ is called
 \emph{regionally $n$-proximal tuple} if for each $\ep> 0$ and each open neighborhood $U_i$ of $x_i$, $i = 1, \dotsc, n$, 
 there are $x_i'\in U_i, i = 1, \dotsc, n$,
 and $m\in \N$ with $\max_{1\leq i<j\leq n}d(T^mx_i', T^mx_j')<\ep$. 
 Let  $Q_n(X,T)$ denote the collection of all regionally $n$-proximal tuples in $(X,T)$.

For sensitive tuples and regionally proximal tuples, we have the following results.
\begin{lem}\label{lem:Sn-Qn}\cite[Theorem 3.4]{YZ08}
	Let $(X,T)$ be a dynamical system. Then
	\begin{enumerate}
		\item $S_n(X,T)\subset Q_n(X,T)$ for every $n\geq 2$;
		\item $S_n(X,T)=Q_n(X,T)\setminus \Delta_n(X)$  for every $n\geq 2$, provide that $(X,T)$ is minimal.
	\end{enumerate}
\end{lem}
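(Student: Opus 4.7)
My plan is to verify each of the two inclusions directly from the respective definitions: for (1) I would unpack sensitivity to obtain a clustered configuration and then re-index it into the form demanded by regional proximality, and for (2) I would combine regional proximality of the tuple with minimality via a minimal-subset argument in $(X^n,T^{(n)})$.

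For (1), fix $(x_1,\ldots,x_n)\in S_n(X,T)$, $\varepsilon>0$, and open neighborhoods $U_i$ of $x_i$; the goal is to produce $x_i'\in U_i$ and $m'\in\bbn$ with $\max_{i<j}d(T^{m'}x_i',T^{m'}x_j')<\varepsilon$. The first move is to apply the sensitive tuple definition to the given $U_i$ together with an opene $U$ of $\diam U<\varepsilon$, obtaining $y_i\in U$ and $m\in\bbn$ with $T^m y_i\in U_i$. This gives a clustered time-$0$ configuration $(y_1,\ldots,y_n)$ within $\varepsilon$ of $\Delta_n(X)$ together with a spread time-$m$ configuration $(T^m y_1,\ldots,T^m y_n)\in\prod_i U_i$, which is the reverse of what $Q_n$ requires. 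To effect the ``time reversal'', I would set $x_i':=T^m y_i\in U_i$ and then invoke the characterization $L(x_1,\ldots,x_n)=X$ at a suitable later point on the $T^{(n)}$-orbit of $(y_1,\ldots,y_n)$, using $T$-invariance of $L$ to extract an $m'\in\bbn$ at which the images $T^{m'}x_i'$ re-cluster within $\varepsilon$.

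For (2), the inclusion $S_n(X,T)\subset Q_n(X,T)\setminus\Delta_n(X)$ follows from (1) together with the fact $S_n\cap\Delta_n=\emptyset$. For the converse, fix $(x_1,\ldots,x_n)\in Q_n(X,T)\setminus\Delta_n(X)$, an opene $U\subset X$, and neighborhoods $U_i$ of $x_i$; we must find $y_i\in U$ and $m\in\bbn$ with $T^m y_i\in U_i$. My plan has three steps. First, apply regional proximality with the $U_i$'s and a sufficiently small $\delta>0$ to obtain $x_i'\in U_i$ and $k\in\bbn$ with all $T^k x_i'$ within $\delta$ of a common point $p\in X$. Second, use minimality of $(X,T)$ to choose $\ell\in\bbn$ with $T^\ell p\in U$; if $\delta$ was chosen below the modulus of continuity of $T^\ell$ at $p$ compatible with $U$, then $y_i:=T^{k+\ell}x_i'\in U$. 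Third, produce a common $m\in\bbn$ with $T^m y_i\in U_i$, i.e.\ $T^{m+k+\ell}x_i'\in U_i$ for every $i$, by arranging $(x_1',\ldots,x_n')$ to lie in a minimal subset of $(X^n,T^{(n)})$, so that the simultaneous-return set to the opene $\prod_i U_i$ is non-empty (in fact syndetic).

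The main obstacle splits between the two parts. In (1) it is the direction reversal between the quantifiers of sensitivity (clustered at time $0$, spread at time $m$) and of regional proximality (spread at time $0$, clustered at time $m'$); this is precisely what the reinterpretation $L(x_1,\ldots,x_n)=X$ together with $T^{(n)}$-invariance of $S_n\cup\Delta_n$ is used to reconcile. In (2) it is the common-return argument, which requires selecting $(x_1',\ldots,x_n')$ as a minimal point of the product system via a Zorn-type argument on $T^{(n)}$-invariant closed subsets of $X^n$, and this is exactly where the hypothesis that $(X,T)$ is minimal enters essentially.
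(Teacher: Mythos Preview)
The paper does not give its own proof of this lemma; it is quoted verbatim from \cite[Theorem~3.4]{YZ08}. So there is no in-paper argument to compare your proposal against, and the relevant question is simply whether your sketch stands on its own.

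For part~(1), your proposed resolution of the ``time reversal'' does not work. You obtain $y_i\in U$ with $T^m y_i\in U_i$, set $x_i'=T^m y_i$, and then appeal to $L(x_1,\ldots,x_n)=X$ and its $T$-invariance to force the forward orbit $T^{m'}x_i'=T^{m+m'}y_i$ to re-cluster. But $L(x_1,\ldots,x_n)=X$ only asserts that from any neighbourhood one may reach $\prod_i U_i$ under some iterate; it says nothing about orbits in $X^n$ approaching $\Delta_n(X)$ after having spread out. Since $L=X$ is the whole space, its $T$-invariance is vacuous and carries no additional information. The re-clustering you need is exactly the content of $Q_n$, so this step is circular as written. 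The actual argument in \cite{YZ08} handles this differently.

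For part~(2), your third step assumes you can ``arrange $(x_1',\ldots,x_n')$ to lie in a minimal subset of $(X^n,T^{(n)})$''. But the points $x_i'$ are handed to you by the definition of $Q_n$ once $\varepsilon$ and the $U_i$ are fixed; you do not get to impose minimality on them after the fact, and a generic choice will not be $T^{(n)}$-minimal. Making the return-time argument work requires a genuine additional ingredient (in \cite{YZ08} this is done via a more delicate use of minimality of $(X,T)$ together with properties of the orbit closure in $X^n$), which your outline does not supply.
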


\begin{thm}\label{thm:Q-Q-n}\cite[Corollary 6.9]{HLY11}
	Let $(X,T)$ be a minimal system and $n\geq 2$.
	If  $x_1,x_2,\dotsc,x_n\in X$ satisfy $(x_i,x_{i+1})\in Q(X,T)$ for $1\le i\le n-1$,
	then $(x_1,x_2,\dotsc,x_n)\in Q_n(X,T)$.
\end{thm}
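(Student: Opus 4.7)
The plan is to proceed by induction on $n$. The base case $n = 2$ is immediate from the definition $Q_2(X,T) = Q(X,T)$. For the inductive step, supposing the statement holds for $n-1$ and given $(x_1,\ldots,x_n)$ with consecutive pairs in $Q(X,T)$, the inductive hypothesis applied to $(x_1,\ldots,x_{n-1})$ yields $(x_1,\ldots,x_{n-1})\in Q_{n-1}(X,T)$, which I will combine with $(x_{n-1},x_n)\in Q(X,T)$ to conclude that $(x_1,\ldots,x_n)\in Q_n(X,T)$.

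The key structural input I would rely on is the classical fact that for a minimal system $Q(X,T)$ is a closed $T\times T$-invariant equivalence relation, coinciding with $R_\pi := \{(x,y): \pi(x) = \pi(y)\}$ where $\pi\colon X\to X_{\mathrm{eq}}$ is the factor map onto the maximal equicontinuous factor. The consecutive-pair hypothesis thus gives $\pi(x_i) = \pi(x_j)$ for all $i,j$, and moreover $Q_n(X,T)$ is closed and $T^{(n)}$-invariant.

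Given $\varepsilon>0$ and open neighborhoods $U_i \ni x_i$, I would first apply $(x_{n-1},x_n)\in Q(X,T)$ to a pre-shrunken pair of neighborhoods with a small tolerance $\eta_0$, obtaining $z_{n-1}\in U_{n-1}$, $z_n\in U_n$, and a time $k_0$; this fixes $k_0$. Next, I would apply the inductive hypothesis to $(x_1,\ldots,x_{n-1})$ with neighborhoods refined via the continuity modulus of the now-fixed $T^{k_0}$, producing $y_i\in U_i$ (for $i\le n-1$) and a time $m_1$ such that the $T^{m_1}y_i$ are pairwise close. Setting $m = m_1 + k_0$ and $y_n = z_n$, a chain of continuity estimates would then compare $T^m y_i$ across all $i$: for $i\le n-1$ via continuity of $T^{k_0}$ applied to the pairwise-close $T^{m_1} y_i$; and for $y_n$ via continuity of $T^{m_1}$ applied to the $\eta_0$-close pair $(T^{k_0}z_n, T^{k_0}z_{n-1})$, together with a pre-shrinking linking $z_{n-1}$ and $y_{n-1}$ at the relevant scale.

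The main obstacle will be the circular interdependence of parameters: the allowed size of the pre-shrunken neighborhood of $x_{n-1}$, needed to link $z_{n-1}$ and $y_{n-1}$, depends on the modulus of continuity of $T^{m_1}$, but $m_1$ is produced by the inductive hypothesis only after the neighborhoods have been fixed. To overcome this, I would invoke the closedness and $T^{(n)}$-invariance of $Q_n$ together with the compactness of $X$: indexing a family of finite approximations by progressively finer pre-shrinkings and passing to a convergent subnet, while using the Ellis semigroup $E(X,T)$ of the minimal system to coordinate the limiting times, should yield a limiting tuple in $Q_n$ that witnesses $(x_1,\ldots,x_n)\in Q_n(X,T)$.
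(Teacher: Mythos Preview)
The paper does not prove this theorem; it is quoted verbatim as \cite[Corollary~6.9]{HLY11} (Huang--Lu--Ye), so there is no in-paper proof to compare against.

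Regarding your proposal itself: you have correctly located the real obstruction, but your resolution is not a proof. The ``circular interdependence of parameters'' you describe is exactly why a naive induction on $n$ fails, and the final paragraph does not close the gap. Passing to a convergent subnet and invoking the Ellis semigroup gives you \emph{some} limit tuple in the closed $T^{(n)}$-invariant set $Q_n(X,T)$ together with some limiting element $p\in E(X,T)$, but nothing in your argument forces the limit of the perturbed tuples to remain at $(x_1,\dotsc,x_n)$, nor does it force the limiting $p$ to map all coordinates to a common point. Closedness and invariance of $Q_n$ are being used as a black box where a genuine mechanism is needed.

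What is actually missing is a structural strengthening of regional proximality that is special to minimal systems and does not follow from compactness alone. The relevant fact, going back to Veech, McMahon and Ellis--Glasner, is a \emph{one-sided} characterization: for minimal $(X,T)$, if $(x,y)\in Q(X,T)$ then for every $\varepsilon>0$ and every neighborhood $U$ of $x$ there exist $x'\in U$ and $m\in\bbn$ with $d(T^m x',y)<\varepsilon$ and $d(T^m x,x)<\varepsilon$ (equivalently, only one coordinate needs to be perturbed, and one may in addition arrange a return condition). It is precisely this asymmetry that breaks the circularity you ran into, and it is a nontrivial theorem, not a compactness lemma. In \cite{HLY11} the route to Corollary~6.9 passes through their measure-theoretic description of $Q_n$ and the equicontinuous structure relation; either way, the input required is substantially heavier than the Ellis-semigroup/subnet sketch you give.
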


\subsection{Maximal equicontinuous factor}
Let $(X,T)$ and $(Y,S)$ be two dynamical systems.
If there is a continuous surjection $\pi\colon X \to Y$ such that $\pi\circ T = S\circ \pi$, then we say that $\pi$ is a \emph{factor map}, $(Y,S)$ is a \emph{factor} of $(X,T)$ or $(X, T)$ is an \emph{extension} of $(Y,S)$.

For a factor map $\pi\colon (X,T)\to (Y,S)$, let
\[
R_\pi=\{(x_1,x_2)\in X^2 \colon \pi(x_1)=\pi(x_2)\}.
\]
Then $R_\pi$ is a $T\times T$-invariant closed equivalence relation on $X$ and $Y=X/R_\pi$.
In fact, there exists a one-to-one correspondence between the collection of factors of $(X,T)$ and the collection of $T\times T$-invariant closed  equivalence relations on $X$. Every topological dynamical system $(X,T)$ has a maximal equicontinuous factor $(X_{eq},T_{eq})$, that is $(X_{eq},T_{eq})$ is equicontinuous
and every equicontinuous factor of $(X,T)$ is also
a factor of $(X_{eq}, T_{eq})$.
There is a closed $T\times T$-invariant equivalence relation $S_{eq}$ on $(X,T)$, 
called the \emph{equicontinuous structure relation } such that $X/S_{eq}=X_{eq}$,
and $S_{eq}$ is the smallest closed $T\times T$-invariant equivalence relation containing $Q(X,T)$.

A factor map $\pi\colon (X, T) \to (Y, S)$ is called \emph{almost one-to-one} if $$\{x \in X \colon \pi^{-1}(\pi(x)) \textrm{ is a singleton} \}$$ is residual in $X$; 

A factor map $\pi\colon (X, T) \to (Y, S)$ is called \emph{an asymptotic extension} (resp. \emph{a Banach proximal extension}, \emph{a  proximal extension})
if $$AS(X,T)\supset R_\pi \ (\text{resp.}\  BP(X,T)\supset R_\pi, P(X,T)\supset R_\pi)$$

\section{Broken family sensitivity}
In this section we introduce broken $\calf$-$n$-sensitivity and prove the main results.
\begin{defn}
	Let $(X,T)$ be a dynamical system, $\calf$ a Furstenberg family and $n\geq 2$.
	We say that $(X,T)$ is \emph{broken  $\calf$-$n$-sensitive}
	if there exist $\delta>0$ and $F\in\calf$ with the property that  for every opene subset $U$ of $X$ and every $l\in\bbn$,
	there exists $x_1^{l},x_2^{l},\dotsc,x_n^{l}\in U$ and $m_{l}\in \bbz_+$ such that
	\[
	\min_{1\le i<j\le n}d(T^k x_i^{l}, T^k x_j^{l})> \delta,\ \forall k\in m_{l}+ F\cap[1,l].
	\]
\end{defn}
Following from the definitions we know that broken $\{\mathbb{N}\}$-$n$-sensitive is equivalent to
blockily thickly $n$-sensitive for any $n\in\bbn$.

\subsection{General properties of several broken family sensitivity}\label{subsection-general}
At the beginning of this section, we will
investigate broken $\calf$-$n$-sensitivity
when $\calf=\calf_{ps}\ \text{or}\ \calf_{pubd}$ and prove one of the main result. The study of these kinds of sensitivity are based on Lemma~\ref{cor:calf-recurrent}. For a system, the recurrence characterization of a point in this system is important. Researchers investigate various recurrence properties of a system and use them to classify transitive systems. Theorem~\ref{thm:broken=} give explicit
equivalent characterizations of these kinds of sensitivity with the recurrence properties and sensitive tuples.

\begin{proof}[Proof of Theorem~\ref{thm:broken=}]
$(\Leftarrow)$
Assume that $(x_1,\dotsc, x_n)$ is an
essential $n$-sensitive tuple and an $\calf$-recurrent point of $(X^n,T^{(n)})$.
Let $U_i$ be the neighbourhoods of $x_i$, for $i=1,2,\dotsc,n$ with
\[\min_{1\leq i<j\leq n} d(U_i,U_j)>\delta\ \text{for some}\ \delta>0.\]
Let $F=N((x_1,\dotsc, x_n),U_1\times \dotsb \times U_n)$. Then $F\in \calf$.
For any $l\in \bbn$, by the continuity of $T$ there exist
a neighbourhood $V_i^l$ of $x_i$ for $i=1,2,\dotsc,n$ such that for any $k\in F\cap[1,l]$, $T^k V_i^l\subset U_i$.
Since $(x_1,\dotsc, x_n)\in S_n^e(X,T)$,
for any opene subset $U$,
there exist $z_1^l\dotsc,z_n^l\in U$ and $m_l\in\bbn$ such that $T^{m_l} z_i^l\in V_i^l$, therefore
\[\min_{1\leq i<j\leq n}d(T^k z_i^l, T^k z_j^l)>\delta, k\in m_l+ F\cap[1,l]\]

$(\Rightarrow)$
Take $x\in \tran(X,T)$ and $U_m$ the neighbourhood of $x$ with
$\diam(U_m)<\frac{1}{m}$. By the definition of broken $\calf$-$n$-sensitive, there exist $\delta>0$ and $F\in\calf$ such that for any $m\in\bbn$,
there exist $x_1^{m},x_2^{m},\dotsc, x_n^{m}\in U_m$ and $p_m\in\bbn$
with the property that
\begin{equation}
d(T^k x_i^{m}, T^k x_j^{m})>\delta, k\in p_m+F\cap[1,m]
\end{equation}
Now we let $T^{p_m} x_i^{m}\rightarrow x_i$ when $m\to\infty$. It is clear that
$x\in L(x_1,x_2,\dotsc, x_n)$.
By the property of transitive point and $L(x_1,x_2,\dotsc, x_n)$, we have $X=L(x_1,x_2,\dotsc, x_n)$, that is $(x_1,x_2,\dotsc, x_n)$ is a sensitive tuple.
Since $T^{p_m} x_i^{m}\rightarrow x_i$ when $m\to\infty$, by (3.1) we have
\[d(T^k x_i, T^k x_j)\geq\delta,\ \forall k\in F.\]
By Lemma~\ref{cor:calf-recurrent}, there is an $\calf$-recurrent point $(y_1,y_2,\dotsc, y_n)\in \overline{(T^{(n)})^F(x_1,x_2,\dotsc, x_n)}$.
(Note that if  $\calf=\calf_{ps}$, $(y_1,y_2,\dotsc, y_n)$ can be an $\calf_{s}$-recurrent point.)
Since
$S_n(X,T)\cup \Delta_n(X)$ is a $T^{(n)}$-invariant closed set,
$\overline{(T^{(n)})^F(x_1,x_2,\dotsc, x_n)}\subset S_n(X,T)\cup \Delta_n(X)$.
It is clear that $\overline{T^{(n)F}(x_1,x_2,\dotsc, x_n)}\cap \Delta^{(n)}(X)=\emptyset$, thus
$(y_1,y_2,\dotsc, y_n)$ is
an essential $n$-sensitive tuple.
\end{proof}

We know that every dynamical system has a maximal equicontinuous factor (\cite{YZ08}). The following theorem characterizes how broken $\calf$-$n$-sensitivity ($\calf=\calf_{ps}\ \text{or}\ \calf_{pubd}$) relates to the fiber of its maximal equicontinuous factor.

\begin{thm}
	Let $(X,T)$ be a minimal system and $\pi:(X,T)\rightarrow (X_{eq},T_{eq})$ be the factor map to its maximal equicontinuous factor. 
Then $(X,T)$ is
	broken $\calf$-$n$-sensitive ($\calf=\calf_{ps}\ \text{or}\ \calf_{pubd}$) if and only if there exist $y\in X_{eq}$ and distinct points
	$x_1,\dotsc,x_n\in \pi^{-1}(y)$ such that
	$(x_1,\dotsc,x_n)$ is an $\calf$-recurrent point of $(X^n,T^{(n)})$.
\end{thm}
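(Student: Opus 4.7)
The plan is to reduce this statement directly to Theorem~\ref{thm:broken=} by identifying essential $n$-sensitive tuples in a minimal system with tuples of pairwise distinct points lying in a common fibre of $\pi$. Once this identification is in place, both implications of the theorem are immediate: the $(\Leftarrow)$ direction is just a plug-in into the sufficient condition of Theorem~\ref{thm:broken=}, and the $(\Rightarrow)$ direction uses the necessary condition of Theorem~\ref{thm:broken=} together with the fibre characterization applied to the resulting essential $n$-sensitive tuple.

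The key technical step is therefore the claim that in a minimal system $(X,T)$ with factor map $\pi\colon X\to X_{eq}$, a tuple $(x_1,\dotsc,x_n)\in X^n\setminus\Delta^{(n)}(X)$ is an essential $n$-sensitive tuple if and only if $x_1,\dotsc,x_n$ are pairwise distinct and $\pi(x_1)=\pi(x_2)=\dotsb=\pi(x_n)$. To establish this, I first invoke Lemma~\ref{lem:Sn-Qn}(2), which in minimal systems identifies $S_n(X,T)$ with $Q_n(X,T)\setminus \Delta_n(X)$, so essential $n$-sensitive tuples are precisely the elements of $Q_n(X,T)$ with pairwise distinct coordinates. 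The task thus becomes describing $Q_n(X,T)$ inside $\pi$.

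Next I would observe that in a minimal system $Q(X,T)$ coincides with the equicontinuous structure relation $S_{eq}$: the preliminaries record that $Q(X,T)$ is a closed $T\times T$-invariant equivalence relation in the minimal setting, while $S_{eq}$ is by definition the smallest such relation containing $Q(X,T)$, forcing $Q(X,T)=S_{eq}$. Hence $(u,v)\in Q(X,T)$ if and only if $\pi(u)=\pi(v)$. With this in hand, one direction of the fibre characterization follows from Theorem~\ref{thm:Q-Q-n}: if $x_1,\dotsc,x_n$ lie in a single fibre $\pi^{-1}(y)$, then each consecutive pair belongs to $Q(X,T)$, and the theorem promotes this to membership in $Q_n(X,T)$. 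For the converse, I would project a $Q_n$-witness onto the $(i,j)$-th pair of coordinates to see that $(x_i,x_j)\in Q(X,T)$ for every $i,j$, i.e.\ $\pi(x_i)=\pi(x_j)$ for all $i,j$.

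Putting everything together is then routine: given the fibre characterization, the $(\Leftarrow)$ direction takes distinct $x_1,\dotsc,x_n\in\pi^{-1}(y)$ forming an $\calf$-recurrent point of $(X^n,T^{(n)})$, recognizes them as an essential $n$-sensitive tuple by the above, and applies the sufficient direction of Theorem~\ref{thm:broken=}; the $(\Rightarrow)$ direction applies the necessary direction of Theorem~\ref{thm:broken=} to extract an essential $n$-sensitive tuple that is $\calf$-recurrent in $(X^n,T^{(n)})$, and then uses the fibre characterization to conclude that its coordinates are distinct and share a common $\pi$-image. The main obstacle, if any, lies in the fibre characterization of $Q_n(X,T)$; but this is really bookkeeping with Lemma~\ref{lem:Sn-Qn}, Theorem~\ref{thm:Q-Q-n}, and the identification $Q(X,T)=S_{eq}$ in the minimal setting, so no new ideas beyond those already in the preliminaries should be required.
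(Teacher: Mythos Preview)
Your proposal is correct and follows essentially the same route as the paper: both directions are reduced to Theorem~\ref{thm:broken=} via the identification $R_\pi=Q(X,T)$ in minimal systems, Lemma~\ref{lem:Sn-Qn}, and Theorem~\ref{thm:Q-Q-n}. You are in fact slightly more explicit than the paper in spelling out the projection step from $Q_n(X,T)$ down to pairwise membership in $Q(X,T)$, which the paper leaves implicit.
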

\begin{proof}
$(\Rightarrow)$	
By Theorem~\ref{thm:broken=}, there exists an essential $n$-sensitive tuple $(x_1,\dotsc,x_n)$ 
which is an $\calf$-recurrent point of $(X^n,T^{(n)})$.
By Lemma~\ref{lem:Sn-Qn}, $S_n^e(X,T)\subset Q_n(X,T)$ and we know that in minimal systems $R_{\pi}=Q(X,T)$. 
Thus there exists $y\in X_{eq}$ such that $x_1,\dotsc,x_n\in \pi^{-1}(y)$.

$(\Leftarrow)$	
Since in minimal systems $R_{\pi}=Q(X,T)$,
$(x_1,x_i)\in Q(X,T)$ for $i=2,\dotsc,n$.	
By Theorem~\ref{thm:Q-Q-n}, 	
$(x_1,\dotsc,x_n)\in Q_n(X,T)$.
By Lemma~\ref{lem:Sn-Qn}, $S_n^e(X,T)= Q_n(X,T)\setminus\Delta^{(n)}(X)$, 
thus $(x_1,\dotsc,x_n)$ is an essential $n$-sensitive tuple which is an $\calf$-recurrent point of $(X^n,T^{(n)})$.
By Theorem~\ref{thm:broken=},
$(X,T)$ is broken $\calf$-$n$-sensitive.
\end{proof}

\subsection{Specific properties of broken \texorpdfstring{$\calf_{ps}$-$n$}{ps-n}-sensitivity}
In Subsection~\ref{subsection-general} we investigate general properties of
broken $\calf_{ps}$-$n$-sensitivity
and give explicit equivalent characterizations.
In this subsection we will further give specific properties of broken $\calf_{ps}$-$n$-sensitivity.

\begin{thm}\label{main1}
	Let $(X,T)$ be a transitive system and $n\geq 2$.
	Then the following statements are equivalent:
	\begin{enumerate}
		\item $(X,T)$ is broken  $\{\bbn\}$-$n$-sensitive;
		\item $(X,T)$ is broken $\calf_{ps}$-$n$-sensitive;
		\item there exists an essential $n$-sensitive tuple $(x_1,x_2,\dotsc,x_n)$ which is a distal tuple;
		\item there exists an essential $n$-sensitive tuple which is an $\calf_s$-recurrent point of $(X^n, T^{(n)})$;
		\item there exists an essential $n$-sensitive tuple which is an $\calf_{ps}$-recurrent point of $(X^n, T^{(n)})$.
	\end{enumerate}
\end{thm}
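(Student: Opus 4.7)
The plan is to close the cycle $(1)\Rightarrow(2)\Rightarrow(5)\Rightarrow(4)\Rightarrow(3)\Rightarrow(1)$; together with the obvious $(4)\Rightarrow(5)$ coming from $\calf_s\subset\calf_{ps}$, this gives the five equivalences. Two arrows need no work: $(1)\Rightarrow(2)$ since $\bbn\in\calf_{ps}$, so the same $\delta$ and $F=\bbn$ that certify blockily thick sensitivity also certify broken $\calf_{ps}$-$n$-sensitivity; and $(2)\Leftrightarrow(5)$ is precisely Theorem~\ref{thm:broken=} applied to $\calf=\calf_{ps}$.

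The key step $(5)\Rightarrow(4)$ upgrades $\calf_{ps}$-recurrence to $\calf_s$-recurrence. Given an essential $n$-sensitive, $\calf_{ps}$-recurrent tuple $(x_1,\dotsc,x_n)$, I would pick pairwise $2\delta$-separated open neighbourhoods $U_i\ni x_i$. Then $F=N((x_1,\dotsc,x_n),U_1\times\dotsb\times U_n)\in\calf_{ps}$ and for each $k\in F$ one has $d(T^kx_i,T^kx_j)\ge 2\delta$. Lemma~\ref{cor:calf-recurrent}(2), applied to $(X^n,T^{(n)})$ and $F$, furnishes an $\calf_s$-recurrent point $(y_1,\dotsc,y_n)$ in the closure of the $F$-orbit of $(x_1,\dotsc,x_n)$; the $2\delta$-separation passes to the limit, ensuring $(y_1,\dotsc,y_n)\notin\Delta^{(n)}(X)$, while $T^{(n)}$-invariance and closedness of $S_n(X,T)\cup\Delta_n(X)$ keep the limit in $S_n^e(X,T)$.

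For $(4)\Rightarrow(3)$, I would use that an $\calf_s$-recurrent point is exactly a minimal point, so the tuple supplied by (4) is a minimal point of $(X^n,T^{(n)})$ with pairwise distinct coordinates. The coordinate projection $\pi_{ij}\colon X^n\to X^2$ intertwines $T^{(n)}$ with $T^{(2)}$, so sends the orbit closure of $(y_1,\dotsc,y_n)$ onto a minimal subset of $X^2$ containing the off-diagonal point $(y_i,y_j)$. If this minimal subset met the diagonal at some $(c,c)$, minimality would force $(y_i,y_j)\in\overline{\orb((c,c),T^{(2)})}\subset\Delta$, contradicting $y_i\ne y_j$; hence every pair $(y_i,y_j)$ is distal and $(y_1,\dotsc,y_n)$ is an essential $n$-sensitive distal tuple.

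For $(3)\Rightarrow(1)$, let $(x_1,\dotsc,x_n)$ be an essential $n$-sensitive distal tuple and set $\delta:=\inf_{k\ge 0}\min_{i<j}d(T^kx_i,T^kx_j)>0$. For every $l\in\bbn$, joint continuity of $T^0,T^1,\dotsc,T^l$ on the compact space $X$ yields $\eta_l>0$ such that $d(w,x_i)<\eta_l$ implies $d(T^kw,T^kx_i)<\delta/4$ for all $k\in[0,l]$ and all $i$. For any opene $U\subset X$, sensitivity of $(x_1,\dotsc,x_n)$ supplies $z_1^l,\dotsc,z_n^l\in U$ and $m_l\in\bbn$ with $d(T^{m_l}z_i^l,x_i)<\eta_l$, whence the triangle inequality gives $d(T^{m_l+k}z_i^l,T^{m_l+k}z_j^l)>\delta/2$ for every $k\in[0,l]$ and $i\ne j$; since $\bbn\cap[1,l]=[1,l]$, this is broken $\{\bbn\}$-$n$-sensitivity with constant $\delta/2$. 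The main obstacle is the step $(5)\Rightarrow(4)$: one must guarantee that the improved $\calf_s$-recurrent tuple produced by Lemma~\ref{cor:calf-recurrent}(2) does not collapse into $\Delta^{(n)}(X)$, and this is precisely where closedness and $T^{(n)}$-invariance of $S_n(X,T)\cup\Delta_n(X)$ are indispensable.
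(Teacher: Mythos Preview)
Your proposal is correct and uses the same essential ingredients as the paper: the reduction to Theorem~\ref{thm:broken=} for $(2)\Leftrightarrow(5)$, Lemma~\ref{cor:calf-recurrent}(2) together with closedness and $T^{(n)}$-invariance of $S_n(X,T)\cup\Delta_n(X)$ to pass from $\calf_{ps}$- to $\calf_s$-recurrence without losing essentiality, and a continuity/sensitive-tuple argument to produce the required blocks.

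The one genuine difference is in how (3) is tied in. The paper runs $(1)\Rightarrow(3)$ by a limiting argument from the definition of blockily thick sensitivity at a transitive point, and then $(3)\Rightarrow(4)$ by taking any minimal point in the orbit closure of the distal tuple. You reverse this: your $(4)\Rightarrow(3)$ observes directly that a minimal point of $(X^n,T^{(n)})$ with pairwise distinct coordinates must be a distal tuple (via the factor-to-$X^2$ argument and invariance of the diagonal), and then $(3)\Rightarrow(1)$ builds the blocks straight from the distal gap $\delta$ and uniform continuity of finitely many iterates. Both routes are short; yours avoids re-running the limit construction already implicit in Theorem~\ref{thm:broken=}, while the paper's route makes the connection $(3)\Rightarrow(4)$ slightly more transparent for later use.
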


\begin{proof}
	$(1)\Rightarrow(2)$ follow from definitions, $(2)\Leftrightarrow(4)\Leftrightarrow(5)$ follow from
	the proof of Theorem~\ref{thm:broken=}.
	
	$(4)\Rightarrow(1)$
	Let $M=\overline{\orb((x_1,x_2,\dotsc,x_n),T^{(n)})}$. Since $(x_1,x_2,\dotsc,x_n)$ is a minimal point and $\Delta^{(n)}(X)$ is a $T^{(n)}$-invariant closed set, $M\cap \Delta^{(n)}(X)=\emptyset$. Take an open neighbourhood $W$ of $M$ 
	such that $\overline{W}\cap \Delta^{(n)}(X)=\emptyset$.
	Then there exists $\delta>0$ such that
	for any $(y_1,y_2,\dotsc y_n)\in W$,
	$\min_{1\leq i<j\leq n}d(y_i,y_j)>\delta$.
	Fix an opene set $U$ of $X$ and $L\in\bbn$.
	Since $M$ is $T^{(n)}$-invariant, $\bigcap_{l=0}^{L}(T^{(n)})^{-l}W$ is an open neighbourhood of $M$. Thus for any $i=1,2,\dotsc,n$, there exists a neighbourhood $U_i$ of $x_i$ such that
	\[U_1\times U_2\times \dotsb \times U_n\subset \bigcap_{l=0}^{L}(T^{(n)})^{-l}W.\]
	Since $(x_1,x_2,\dotsc,x_n)$ is an $n$-sensitive tuple, there exist $z_1,z_2,\dotsc,z_n\in U$ and $k\in\bbn$ such that $T^k z_i\in U_i$, $i=1,2,\dotsc,n$.
	By the construction of $\delta$ and $W$,
	\[\min_{1\leq i<j\leq n}d(T^{k+l}z_i, T^{k+l}z_j)>\delta, l=0,1,2,\dotsc,L.\]
	So $(X,T)$ is broken $\{\N\}$-$n$-sensitive.
	
	$(1)\Rightarrow(3)$
	Fix $x\in \tran(X)$ and let $U_m$ be the neighborhood of $x$ such that $\diam(U_m)<\frac{1}{m}$. By the definition, there exist $\delta>0$, $\{x_1^m, x_2^m,\cdots,x_
	n^m\}\subset U_m$ and $k_m\in \bbn$ such that  
	\[\min_{1\leq i\neq j \leq n}d(T^{k_m+p}{x_i^m}, T^{k_m+p}{x_j^m})>\delta, \forall 0 \leq p \leq m-1.\]
	Denote $x_i$ be the limit of $T^{k_m}{x_i^m}$ as $m\to\infty$, for $i=1,2,\cdots,n$.
	It is clear that $(x_1,x_2,\cdots,x_n)$ is a distal tuple and $x\in L(x_1,x_2,\cdots,x_n)$. Since $L(x_1,x_2,\cdots,x_n)$ is closed and $T$-invariant, $(x_1,x_2,\cdots,x_n)\in S_n^e(X,T)$.
	
	$(3)\Rightarrow(4)$
	Since $S_n(X,T)\cup \Delta_n$ is a $T^{(n)}$-invariant closed set, 
	$\overline{\orb((x_1,x_2,\cdots,x_n),T^{(n)})}\subset S_n(X,T)\cup \Delta_n$.
	Since $(x_1,x_2,\cdots,x_n)$ is a distal tuple, $\overline{\orb((x_1,x_2,\cdots,x_n),T^{(n)}}\cap \Delta^{(n)}=\emptyset$. Therefore any minimal point in \[\overline{\orb((x_1,x_2,\cdots,x_n),T^{(n)})}\] is the essential $n$-sensitive tuple we need.
\end{proof}

\begin{rem}\label{rem:calf==}
By Theorem~\ref{main1}, for a family $\calf$ with  $\{\mathbb{N}\}\subset \calf \subset \calf_{ps}$, broken $\calf$-$n$-sensitivity,
	broken $\calf_{ps}$-$n$-sensitivity, 
	broken $\{\bbn\}$-$n$-sensitivity 
	and blockily thickly-$n$-sensitivity
	are equivalent in transitive systems.
\end{rem}

Following from Remark~\ref{rem:calf==}, we know that the result of \cite[Theorem A]{Y18} is established also for broken $\calf_{ps}$-sensitivity, which is the following corollary.

\begin{cor}\label{cor:ps-proximal-exten}
	Let $(X,T)$ be a minimal system. Then $(X,T)$
	is broken $\calf_{ps}$-sensitive if and only if it is not a proximal extension to its maximal equicontinuous factor. 
\end{cor}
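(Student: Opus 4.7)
The plan is a two-step reduction: first, to invoke Theorem~\ref{main1} in the case $n=2$ to recast broken $\calf_{ps}$-sensitivity as the existence of an essential sensitive pair that is simultaneously a distal pair, and then to exploit the standard identifications $R_\pi=Q(X,T)$ and $S_2^e(X,T)=Q(X,T)\setminus\Delta_2(X)$ that are available in the minimal setting. One could also simply combine Remark~\ref{rem:calf==} with \cite[Theorem A]{Y18}, but I will keep the argument inside the framework of the paper.

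For the \emph{only if} direction, I would assume $(X,T)$ is broken $\calf_{ps}$-sensitive and appeal to the equivalence $(2)\Leftrightarrow(3)$ of Theorem~\ref{main1} to obtain an essential sensitive pair $(x_1,x_2)$ that is distal. Lemma~\ref{lem:Sn-Qn}(1) places it in $Q_2(X,T)=Q(X,T)$, and since $(X,T)$ is minimal the relation $R_\pi$ coincides with $Q(X,T)$, so $(x_1,x_2)\in R_\pi$. Because $(x_1,x_2)$ is distal with $x_1\neq x_2$ it fails to be proximal, whence $R_\pi\not\subset P(X,T)$ and $\pi$ is not a proximal extension.

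For the \emph{if} direction, I would start from the hypothesis that $\pi$ is not a proximal extension and choose $(x_1,x_2)\in R_\pi\setminus P(X,T)$ with $x_1\neq x_2$. Non-proximality supplies $\eps>0$ with $d(T^k x_1,T^k x_2)\geq \eps$ for every $k\in\bbz_+$, so $M:=\overline{\orb((x_1,x_2),T^{(2)})}$ is bounded away from $\Delta_2(X)$. I would then pick any minimal point $(y_1,y_2)\in M$; its orbit closure is a minimal subsystem of $M$ and is therefore disjoint from $\Delta_2(X)$, so $(y_1,y_2)$ is distal. Closed $T^{(2)}$-invariance of $R_\pi$ forces $(y_1,y_2)\in R_\pi=Q(X,T)$, and Lemma~\ref{lem:Sn-Qn}(2) then yields $(y_1,y_2)\in Q_2(X,T)\setminus\Delta_2(X)=S_2^e(X,T)$. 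The implication $(3)\Rightarrow(2)$ of Theorem~\ref{main1} converts this into broken $\calf_{ps}$-sensitivity.

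The one delicate step will be promoting the arbitrary non-proximal pair in $R_\pi$ to an honest distal pair in $R_\pi$; the whole weight of that step rests on the observation that any minimal point of $(X^2,T^{(2)})$ off the diagonal is automatically distal (for if it were proximal its orbit closure would be forced to meet $\Delta_2(X)$, contradicting minimality off the diagonal). Once this mini-lemma is in hand, Lemma~\ref{lem:Sn-Qn} and Theorem~\ref{main1} carry the argument through routinely.
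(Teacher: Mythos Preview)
Your argument is correct, and it is more self-contained than the paper's treatment: the paper simply notes (via Remark~\ref{rem:calf==}) that broken $\calf_{ps}$-sensitivity coincides with blockily thickly sensitivity and then invokes \cite[Theorem~A]{Y18} directly, whereas you rederive the statement inside the paper's own framework using Theorem~\ref{main1} and Lemma~\ref{lem:Sn-Qn}. Both routes are short; yours has the advantage of not leaning on the external dichotomy.

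One remark on the \emph{if} direction: the ``delicate step'' you flag is actually vacuous. In the paper's usage a pair $(x_1,x_2)$ with $x_1\neq x_2$ is a distal pair exactly when it is not proximal, i.e., when $\inf_{k\geq 0}d(T^kx_1,T^kx_2)>0$, and this is precisely what you already extracted from non-proximality. Thus the original pair $(x_1,x_2)\in R_\pi\setminus P(X,T)$ itself satisfies condition~(3) of Theorem~\ref{main1}: it lies in $R_\pi=Q(X,T)$, so Lemma~\ref{lem:Sn-Qn}(2) places it in $S_2^e(X,T)$, and it is distal by construction. The passage to a minimal point $(y_1,y_2)\in M$ and the accompanying mini-lemma are correct but unnecessary.
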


A system $(X,T)$ is called \emph{topological ergodic} if for any opene subsets $U$ and $V$ of $X$, $N(U,V)=\{n\in\bbn: T^{-n}U\cap V\not=\emptyset \}$ is a syndetic set.
Now using Theorem~\ref{main1} we prove that if $(X,T)$ is a topological ergodic system containing $n$ different minimal subsets $M_1, M_2, \dotsc, M_n$, then it is  broken $\calf_{ps}$-$n$-sensitive.

\begin{lem}\label{lem:topo-ergodic}\cite[Theorem 7.5]{YZ08}
	Let $(X,T)$ be a topological ergodic system and $n\geq 2$.
	If there exist pairwise disjoint minimal subsets $M_1, M_2, \dotsc, M_n$ of $(X,T)$, then there exists an essential $n$-sensitive tuple $(y_1,\dotsc,y_n)$ which is a minimal point of $(X^n,T^{(n)})$.
\end{lem}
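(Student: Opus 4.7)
The plan is (i) to construct $(y_1,\ldots,y_n)$ as a minimal point of $(X^n,T^{(n)})$ with $y_i\in M_i$ for each $i$, and then (ii) to verify the $n$-sensitivity condition. For (i), each $(M_i,T)$ is itself minimal, hence trivially has dense minimal points, so applying Lemma~\ref{lem: XY-minimal-points} inductively yields a dense set of minimal points in $(M_1\times\cdots\times M_n, T^{(n)})$; pick any such $(y_1,\ldots,y_n)$. Since $M_1\times\cdots\times M_n$ is closed and $T^{(n)}$-invariant inside $X^n$, this tuple is also a minimal point of $(X^n,T^{(n)})$. Pairwise disjointness of the $M_i$'s, combined with $y_i\in M_i$, forces $y_i\ne y_j$ for $i\ne j$, so $(y_1,\ldots,y_n)\notin\Delta^{(n)}(X)$.

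For (ii), I would fix an opene $V\subset X$ and open neighborhoods $U_i\ni y_i$; the goal is to produce $z_1,\ldots,z_n\in V$ and $m\in\bbn$ with $T^m z_i\in U_i$ for every $i$. Shrink to opene $U_i'\ni y_i$ with $\overline{U_i'}\subset U_i$. By minimality of $(y_1,\ldots,y_n)$ in $X^n$, the return set $A=\{m\ge 0 : T^m y_i\in U_i' \text{ for all } i\}$ is syndetic with some gap bound $K$, and uniform continuity of $T^0,\ldots,T^K$ supplies $\delta>0$ for which $d(w,y_i)<\delta$ together with $T^k y_i\in U_i'$ imply $T^k w\in U_i$ whenever $0\le k\le K$. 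Topological ergodicity of $(X,T)$ then makes each $N(V,B(y_i,\delta))$ syndetic, producing $z_i\in V$ and bounded $p_i\ge 0$ with $T^{p_i}z_i\in B(y_i,\delta)$.

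The main obstacle is synchronization: the hitting times $p_i$ may differ across $i$, yet a single $m$ must work for every $i$ simultaneously. My plan is to take $m$ of the form $m=p_i+r_i$ with $r_i\in A$ chosen so that $m$ is independent of $i$, i.e., $m\in\bigcap_i(p_i+A)$; the freedom to select the $p_i$'s from infinitely many candidates in the syndetic sets $N(V,B(y_i,\delta))$, together with the syndetic structure of $A$ and if necessary replacing $(y_1,\ldots,y_n)$ by an orbit translate (which shifts $A$ without changing its gap bound), is what must be leveraged to render the intersection nonempty. Once such an $m$ is fixed, uniform continuity places $T^m z_i=T^{r_i}(T^{p_i}z_i)$ within $\delta$ of $T^{r_i}y_i\in U_i'$, hence inside $U_i$, which completes the verification that $(y_1,\ldots,y_n)$ is an essential $n$-sensitive tuple.
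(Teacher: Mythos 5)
Your step (i) is fine: a minimal point of the closed $T^{(n)}$-invariant set $M_1\times\dotsb\times M_n$ is a minimal point of $(X^n,T^{(n)})$, and pairwise disjointness of the $M_i$ makes its coordinates pairwise distinct. The gap is in step (ii), and it is precisely the step you yourself call ``the main obstacle'': producing the common time $m$ is the entire content of the lemma, and your sketch offers no argument for it. From your setup you only know that each $S_i=N(V,B(y_i,\delta))$ is syndetic and that $A$ is syndetic with gap $K$; this information alone cannot force $\bigcap_i(p_i+A)\neq\emptyset$ for admissible $p_i\in S_i$: syndetic sets, even after adding a common syndetic set, can be pairwise disjoint (for instance $S_1=4\bbz_+$, $S_2=4\bbz_+ +2$, $A=4\bbz_+$ give disjoint $S_1+A$ and $S_2+A$), and nothing in your argument rules out such congruence-type obstructions; replacing $(y_1,\dotsc,y_n)$ by an orbit translate shifts everything by the same amount and does not help. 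There is also a second, unacknowledged defect: your $\delta$ only controls the iterates $T^0,\dotsc,T^K$, while the increments $r_i=m-p_i\in A$ you allow are unbounded, so even if $m\in\bigcap_i(p_i+A)$ were produced, the concluding claim that $T^m z_i=T^{r_i}(T^{p_i}z_i)$ stays close to $T^{r_i}y_i$ does not follow. If instead you insist on $r_i\le K$, note that $A\cap[0,K]$ may consist of a single number $j_0$, in which case you would need all the $p_i$ to coincide, which is the original synchronization problem again.

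The paper does not reprove the lemma (it quotes \cite[Theorem 7.5]{YZ08}), but any correct argument has to use the invariance of the whole minimal sets $M_i$, and their pairwise disjointness, inside the hitting argument itself, not merely to make the coordinates distinct. One workable route: since $M_i$ is $T$-invariant, $G_i(L):=\bigcap_{j=0}^{L}T^{-j}B(M_i,\delta)$ is an open neighbourhood of $M_i$ for every $L$, so topological ergodicity makes $N(V,G_i(L))$ syndetic; choosing the window lengths $L_i$ backwards against the corresponding syndeticity gaps yields a single time $m$ and opene sets $V_i\subset V$ with $T^{m+j}V_i\subset B(M_i,\delta)$ for all $0\le j\le R$ and all $i$, with $R$ arbitrarily large. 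Carrying this out along shrinking neighbourhoods of a transitive point and passing to a limit produces a tuple whose forward orbit under $T^{(n)}$ stays in $\prod_i\overline{B(M_i,\delta)}$ and which lies in $S_n(X,T)$; its orbit closure then misses $\Delta^{(n)}(X)$ (take $2\delta<\min_{i\neq j}d(M_i,M_j)$) and is contained in the closed invariant set $S_n(X,T)\cup\Delta_n(X)$, so any minimal point of that orbit closure is an essential $n$-sensitive tuple which is a minimal point of $(X^n,T^{(n)})$. The decisive idea --- aiming ergodicity at invariant neighbourhoods of the sets $M_i$, so that ``staying good over a long time window'' becomes an open condition --- is exactly what your proposal is missing.
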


\begin{prop}\label{coro:dense-minimal-point}
	Let $n\geq 2$ and $(X,T)$ be a topological ergodic system containing $n$ different minimal subsets $M_1, M_2,\dotsc, M_n$. Then $(X,T)$ is broken $\calf_{ps}$-$n$-sensitive.
\end{prop}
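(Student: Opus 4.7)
The plan is short: combine Lemma~\ref{lem:topo-ergodic} with the equivalent characterizations already established in Theorem~\ref{main1}. First I would note that a topologically ergodic system is in particular transitive, since the syndeticity of each return set $N(U,V)$ forces it to be infinite. Hence Theorem~\ref{main1} is available and it suffices to verify one of its equivalent conditions, most naturally condition~(4): produce an essential $n$-sensitive tuple which is an $\calf_s$-recurrent point of $(X^n,T^{(n)})$.

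Next I would apply Lemma~\ref{lem:topo-ergodic} to the pairwise disjoint minimal subsets $M_1,\dotsc,M_n$. This directly yields an essential $n$-sensitive tuple $(y_1,\dotsc,y_n)\in S_n^e(X,T)$ which is moreover a minimal point of the product system $(X^n,T^{(n)})$. The only remaining ingredient is the standard fact that every minimal point of any dynamical system is an $\calf_s$-recurrent point: for a minimal point $z$ of $(Y,S)$ and any neighbourhood $V$ of $z$ in $Y$, the set $N(z,V)$ is syndetic, since otherwise the orbit closure $\overline{\orb(z,S)}$ would contain a proper closed invariant subset disjoint from some iterate's preimage of $V$, contradicting minimality. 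Applying this to $(y_1,\dotsc,y_n)$ viewed inside its orbit closure in $(X^n,T^{(n)})$ gives that $(y_1,\dotsc,y_n)$ is $\calf_s$-recurrent in $(X^n,T^{(n)})$.

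With both properties (essential $n$-sensitive tuple and $\calf_s$-recurrence in the product) verified, condition~(4) of Theorem~\ref{main1} is satisfied, and the chain of equivalences $(4)\Rightarrow(2)$ in that theorem immediately gives broken $\calf_{ps}$-$n$-sensitivity of $(X,T)$. There is no real obstacle here: the main work is already encapsulated in Lemma~\ref{lem:topo-ergodic} (which produces the sensitive minimal tuple) and Theorem~\ref{main1} (which translates $\calf_s$-recurrence of such a tuple into broken $\calf_{ps}$-$n$-sensitivity). The only conceptual point to be careful about is that $\calf_s$-recurrence suffices, since $\calf_s\subset\calf_{ps}$ and Theorem~\ref{main1} explicitly accepts either variant.
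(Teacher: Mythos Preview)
Your proposal is correct and follows essentially the same route as the paper: apply Lemma~\ref{lem:topo-ergodic} to obtain an essential $n$-sensitive tuple that is a minimal point of $(X^n,T^{(n)})$, then invoke Theorem~\ref{main1}. Your added remarks that topological ergodicity implies transitivity and that minimal points are $\calf_s$-recurrent simply make explicit the two small facts the paper's two-line proof leaves implicit.
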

\begin{proof}
	By Lemma~\ref{lem:topo-ergodic}, there exists an essential $n$-sensitive tuple $(x_1,\dotsc,x_n)$ which is a minimal point of $(X^n,T^{(n)})$.
	Thus by Theorem~\ref{main1}, $(X,T)$ is broken $\calf_{ps}$-$n$-sensitive.
\end{proof}

Since any non-minimal $M$-system is topological ergodic containing $n$ different minimal subsets for any $n\in \N$, we have the following corollary.
\begin{cor}
	Let $(X,T)$ be an $M$-system which is not minimal. Then for every $n\geq 2$, $(X,T)$ is broken $\calf_{ps}$-$n$-sensitive.
\end{cor}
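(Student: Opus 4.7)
The plan is to deduce this directly from Proposition~\ref{coro:dense-minimal-point} by verifying its two hypotheses for a non-minimal $M$-system $(X,T)$: namely, (i) topological ergodicity, and (ii) the existence of $n$ distinct minimal subsets for every $n\geq 2$. Once both are in hand, the corollary is immediate.

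For (i), I would take arbitrary opene subsets $U,V\subset X$. By transitivity there is some $k\in\bbn$ with $W:=U\cap T^{-k}V$ opene, and by the $M$-system hypothesis (dense minimal points) $W$ contains a minimal point $p$. Then $T^k p\in V$, so the minimal subsystem $\overline{\orb(p,T)}$ meets $V$ in a set that is opene relative to this minimal subsystem. The standard fact that minimal points have syndetic return times to every relatively open subset of their orbit closure gives that $N(p,V)=\{n\in\bbz_+:T^n p\in V\}$ is syndetic. Since $p\in U$, we have $N(p,V)\subset N(U,V)$, so $N(U,V)$ is syndetic; thus $(X,T)$ is topologically ergodic.

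For (ii), I would fix a transitive point $x\in\tran(X,T)$. Since $X$ is not minimal, $x$ cannot lie in any minimal subset: if $x$ belonged to a minimal set $M$, then $\overline{\orb(x,T)}\subset M\subsetneq X$, contradicting $\overline{\orb(x,T)}=X$. Now argue by induction: suppose distinct minimal subsets $M_1,\dotsc,M_{k-1}$ have been chosen. Their union is closed and does not contain $x$, so its complement is opene. By density of minimal points, pick a minimal point $p_k$ in this complement and set $M_k:=\overline{\orb(p_k,T)}$; since $M_1,\dotsc,M_{k-1}$ are $T$-invariant and closed, each fails to contain $p_k$, so $M_k$ is disjoint from each of them by minimality. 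Iterating yields $n$ pairwise disjoint minimal subsets for every $n\geq 2$.

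Combining (i) and (ii) with Proposition~\ref{coro:dense-minimal-point} gives broken $\calf_{ps}$-$n$-sensitivity for every $n\geq 2$. The main potential obstacle is nothing substantial: one merely has to remember that minimal points have syndetic return times into relatively open sets of their orbit closures (the only nontrivial input) and that transitive points of non-minimal transitive systems avoid every minimal subset. Both are standard, so the corollary is essentially a repackaging of Proposition~\ref{coro:dense-minimal-point} for this familiar class of systems.
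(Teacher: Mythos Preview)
Your proof is correct and follows exactly the approach the paper takes: the paper simply asserts that any non-minimal $M$-system is topologically ergodic and contains $n$ different minimal subsets for every $n$, then invokes Proposition~\ref{coro:dense-minimal-point}. You have supplied the details behind these two assertions, which the paper leaves implicit, but the strategy is the same.
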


\subsection{Specific properties of broken \texorpdfstring{$\calf_{pubd}$-$n$}{pubd-n-}-sensitivity}
In Subsection~\ref{subsection-general} we investigate general properties of
broken $\calf_{pubd}$-$n$-sensitivity
and give explicit equivalent characterizations.
In this subsection we will further give specific properties of broken $\calf_{pubd}$-$n$-sensitivity.

Let $(X,T)$ be a dynamical system and $M(X)$ be the set of Borel probability measures on $X$. If for any $B\in\mathcal{B}(X)$, $\mu(T^{-1}B)=\mu(B)$, where $\mathcal{B}(X)$ is the Borel $\sigma$-algebra of $X$, we call $\mu$ a $T$-invariant Borel probability measure and denote by $M(X,T)$ the set of all $T$-invariant Borel probability measures on $X$.
The support of a measure $\mu\in M(X)$, denoted by $\supp(\mu)$, is defined as follow:
\[\supp(\mu)=\{x\in X:\ \text{for any neighbourhood}\ U\ \text{of}\ x, \mu(U)>0
\},\]
And the support of a system $(X,T)$, denoted by $\supp(X,T)$, is defined as follow:
\[\supp(X,T)=\overline{\bigcup\{\supp(\mu): \mu\in M(X,T)}\},\]

\begin{lem}\label{lem:supp-pld}
	Let $(X,T)$ be a dynamical system.
	Then
	\begin{enumerate}
		\item\cite[Proposition 3.10]{LT14} $\supp(X,T)=\overline{\{x\in X: x\ \text{is a }\calf_{pubd}\text{-recurrent point}\}}$;
		\item\cite[ Proposition $3.12$]{LT14} 
		$\supp(X^n,T^{(n)})=\supp(X,T)\times\dotsb\times\supp(X,T)$.
	\end{enumerate}
\end{lem}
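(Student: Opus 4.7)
The plan is to prove both parts of Lemma~\ref{lem:supp-pld} via standard bridges between dynamical recurrence and invariant measures, treating~(1) as the substantive statement and~(2) as a short consequence of product measure theory.

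For the right-to-left inclusion in~(1), starting from a $\calf_{pubd}$-recurrent point $x$, the goal is to produce a single $T$-invariant measure $\nu$ with $x\in\supp(\nu)$. Fix a countable neighborhood basis $\{U_k\}$ of $x$. For each $k$, the hypothesis gives intervals $[a_k,b_k]$ with $b_k-a_k\to\infty$ along which $T^j x$ visits $U_k$ with relative frequency bounded below by some $\alpha_k>0$; any weak-$*$ limit $\nu_k$ of the empirical averages $\frac{1}{b_k-a_k}\sum_{j=a_k}^{b_k-1}\delta_{T^j x}$ is $T$-invariant and assigns mass at least $\alpha_k$ to $\overline{U_k}$. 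The convex combination $\nu=\sum_{k\geq 1}2^{-k}\nu_k$ is then invariant and charges every $U_k$, so $x\in\supp(\nu)\subseteq\supp(X,T)$.

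For the reverse inclusion in~(1), the plan is to start from $x\in\supp(\mu)$ with $\mu\in M(X,T)$, reduce to the ergodic case via ergodic decomposition, and apply Birkhoff's theorem to the lower semicontinuous indicator of any open set: for $\mu$-a.e.\ $y$ and every open $U$, $\liminf_{N\to\infty}\frac{1}{N}|N(y,U)\cap[0,N)|\geq\mu(U)$. When $y\in\supp(\mu)$, every neighborhood $U$ of $y$ has $\mu(U)>0$, so $N(y,U)$ has positive lower (hence upper Banach) density, making $y$ a $\calf_{pubd}$-recurrent point. Such $y$ form a $\mu$-full subset of $\supp(\mu)$ and are therefore dense in $\supp(\mu)$; taking the union over $\mu\in M(X,T)$ and closing yields the desired inclusion.

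For~(2), the inclusion $\supseteq$ uses that for $\mu_i\in M(X,T)$ the product $\mu_1\times\cdots\times\mu_n$ is $T^{(n)}$-invariant on $X^n$, together with the general identity $\supp(\mu_1\times\cdots\times\mu_n)=\supp(\mu_1)\times\cdots\times\supp(\mu_n)$ for Radon probability measures on a product of compact metric spaces. Conversely, any $T^{(n)}$-invariant $\lambda$ on $X^n$ has $T$-invariant marginals $(\pi_i)_*\lambda$, and $x_i\in\supp((\pi_i)_*\lambda)$ whenever $(x_1,\dotsc,x_n)\in\supp(\lambda)$, which gives $\subseteq$. The step I expect to require the most care is the diagonalization in the right-to-left direction of~(1): a single neighborhood of $x$ only yields an invariant measure giving mass near $x$, so showing that $x$ itself lies in the support of some fixed invariant measure forces one to combine the measures obtained from every basic neighborhood in the sum $\nu=\sum 2^{-k}\nu_k$. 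Once that is in place, the remaining ingredients—Birkhoff's theorem, ergodic decomposition, and the product-support identity—are standard.
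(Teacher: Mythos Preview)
Your proof is correct. Note, however, that the paper itself does not prove this lemma: both items are quoted with citation to \cite[Propositions 3.10 and 3.12]{LT14} and no argument is given in the present paper. So there is no in-paper proof to compare against; your reconstruction supplies what the authors chose to import as a black box.

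Your arguments are the standard ones and almost certainly match what appears in the cited reference. A couple of small points worth tightening if you write this out in full: in the $\supseteq$ direction of (1), for each basic neighborhood $U_k$ you actually need a \emph{sequence} of intervals (say $[a_{k,m},b_{k,m}]$ with $b_{k,m}-a_{k,m}\to\infty$ as $m\to\infty$) to extract a $T$-invariant weak-$*$ limit $\nu_k$, and you should invoke Portmanteau on the closed set $\overline{U_k}$ to get $\nu_k(\overline{U_k})\ge\alpha_k$; then, choosing the basis so that $\overline{U_{k+1}}\subset U_k$, the combination $\nu=\sum 2^{-k}\nu_k$ indeed has $x\in\supp(\nu)$. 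In the $\subseteq$ direction of (1), the reduction to ergodic measures is genuinely needed (for a non-ergodic $\mu$ the Birkhoff limit is only a conditional expectation, which need not be positive $\mu$-a.e.\ on $\supp(\mu)$), and one should observe that $\supp(\mu)\subset\overline{\bigcup_\omega\supp(\mu_\omega)}$ over the ergodic components $\mu_\omega$. Part (2) is exactly as you describe.
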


Following from Lemma~\ref{lem:supp-pld}(1) and Theorem~\ref{thm:broken=}, we have the following corollary.
\begin{cor}\label{cor:pubd-supp}
	Let $(X,T)$ be a transitive system.
	If $(X,T)$ is broken $\calf_{pubd}$-n-sensitive then $\supp(X^n,T^{(n)})\cap S_n^e(X,T)\not=\emptyset$.	
\end{cor}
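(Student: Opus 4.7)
The proof should be essentially a one-line chain of applications combining Theorem~\ref{thm:broken=} with Lemma~\ref{lem:supp-pld}(1). The plan is to extract an essential $n$-sensitive tuple that is simultaneously $\calf_{pubd}$-recurrent, then observe that such a tuple automatically lives in the support of $(X^n, T^{(n)})$.

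More concretely, assume $(X,T)$ is broken $\calf_{pubd}$-$n$-sensitive. First I would invoke Theorem~\ref{thm:broken=} with $\calf = \calf_{pubd}$ to produce an essential $n$-sensitive tuple $(x_1, x_2, \dotsc, x_n) \in S_n^e(X,T)$ that is also an $\calf_{pubd}$-recurrent point of the product system $(X^n, T^{(n)})$. Next I would appeal to Lemma~\ref{lem:supp-pld}(1), applied to the system $(X^n, T^{(n)})$, which tells us that $\supp(X^n, T^{(n)})$ is the closure of the set of all $\calf_{pubd}$-recurrent points of $(X^n, T^{(n)})$. Since $(x_1, \dotsc, x_n)$ is itself such a recurrent point, it belongs to $\supp(X^n, T^{(n)})$, and hence $(x_1, \dotsc, x_n) \in \supp(X^n, T^{(n)}) \cap S_n^e(X,T)$, which gives the desired non-emptiness.

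There is no real obstacle here: the two results cited do all the work, and the argument is a direct composition. The only thing to double-check is that Lemma~\ref{lem:supp-pld}(1) is indeed applied to the correct system: it is stated for an arbitrary dynamical system, so instantiating it at $(X^n, T^{(n)})$ is legitimate without any extra hypotheses (in particular, transitivity of $X$ is used only through Theorem~\ref{thm:broken=} to obtain the tuple in the first place, not in the support characterization). No use of Lemma~\ref{lem:supp-pld}(2) is needed for the statement as written.
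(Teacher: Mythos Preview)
Your proposal is correct and matches the paper's approach exactly: the paper simply states that the corollary follows from Lemma~\ref{lem:supp-pld}(1) and Theorem~\ref{thm:broken=}, and you have unpacked precisely that chain of reasoning. Your remark that Lemma~\ref{lem:supp-pld}(1) is applied to the product system $(X^n,T^{(n)})$ rather than to $(X,T)$ is the only thing worth making explicit, and you did so.
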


A dynamical system $(X,T)$  is called  \textit{mean equicontinuous} if for any $\ep>0$ there is $\delta>0$ such that if $x,y\in X$ with $d(x,y)<\delta$ then  $\limsup_{m\to+\infty}\frac{1}{m}\sum_{k=0}^{m-1}d(T^kx,T^ky)<\ep$.
We refer the readers to \cite{LTY15,LYY21} for more details about mean equicontinuous system.

\begin{lem}\cite[Theorem 4.3]{QZ18}\label{lem: Q-P-BP}
Let $(X,T)$ be a dynamical system. Then $(X,T)$ is mean equicontinuous if and only if $Q(X,T)=P(X,T)=BP(X,T)$.
\end{lem}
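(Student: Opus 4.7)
The containments $BP(X,T)\subset P(X,T)\subset Q(X,T)$ hold in any dynamical system, so the lemma reduces to showing that mean equicontinuity is equivalent to the reverse containment $Q(X,T)\subset BP(X,T)$. The plan is to introduce the pseudo-metric
\[
\rho(x,y):=\limsup_{m\to\infty}\frac{1}{m}\sum_{k=0}^{m-1}d(T^k x,T^k y),
\]
which is $T$-invariant (discarding one term only shifts the average by $O(1/m)$), and to analyze its continuity and vanishing sets.

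For $(\Rightarrow)$, mean equicontinuity is precisely uniform continuity of $\rho$ at the diagonal, and the triangle inequality $|\rho(x_n,y_n)-\rho(x,y)|\leq \rho(x_n,x)+\rho(y_n,y)$ then upgrades this to joint continuity of $\rho$ on $X\times X$. For $(x,y)\in Q(X,T)$ I would pick sequences $(x_n,y_n)\to(x,y)$ and $m_n\in\mathbb{N}$ with $d(T^{m_n}x_n,T^{m_n}y_n)\to 0$, so that $\rho(x_n,y_n)=\rho(T^{m_n}x_n,T^{m_n}y_n)\to 0$, and continuity forces $\rho(x,y)=0$. To promote $\rho(x,y)=0$ to $(x,y)\in BP(X,T)$ one must pass from Cesaro averages to Banach (sup-over-starting-point) averages: the idea is to cover any long block $[a,a+L]$ by shorter blocks whose left endpoints $k$ satisfy $d(T^k x,T^k y)<\delta$ (such indices form a density-one set and hence dominate any long interval), apply the mean equicontinuity estimate shift-wise, and conclude via Markov's inequality that the $\varepsilon$-bad set has Banach density zero.

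For $(\Leftarrow)$, argue by contrapositive: if $(X,T)$ is not mean equicontinuous, pick $\varepsilon_0>0$ and $(x_n,y_n)\in X^2$ with $d(x_n,y_n)\to 0$ but $\rho(x_n,y_n)\geq\varepsilon_0$, and by compactness arrange $(x_n,y_n)\to(z,z)\in\Delta_2(X)$. By the definition of $\limsup$, for each $n$ there is a long interval $[a_n,b_n]$ on which the density of $\{k:d(T^k x_n,T^k y_n)\geq\varepsilon_0/2\}$ exceeds $\varepsilon_0/2$. A diagonal selection of centers $k_n\in[a_n,b_n]$ witnessing the separation, together with compactness, yields $T^{k_n}x_n\to u$, $T^{k_n}y_n\to v$ with $d(u,v)\geq\varepsilon_0/2$; since $(x_n,y_n)$ are arbitrarily close, the very definition of regional proximality gives $(u,v)\in Q(X,T)$. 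A careful diagonalization preserving the positive density of bad indices in the block around $k_n$ then shows that $(u,v)$ itself has a positive upper Banach density of $\varepsilon_0/4$-separated iterates, contradicting $(u,v)\in BP(X,T)$.

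The main obstacle is the quantitative upgrade from Cesaro to Banach averages, which appears both in the Markov-inequality step in $(\Rightarrow)$ and in the diagonal construction in $(\Leftarrow)$. Both rely on the same combinatorial ingredient: covering a long integer block by shorter blocks whose left endpoint places the pair inside a $\delta$-neighborhood of the diagonal, with an exceptional set of controlled size. Once this ingredient is in hand, the pseudo-metric $\rho$ and its Banach analogue carry the rest of the argument symmetrically.
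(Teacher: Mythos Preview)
The paper does not prove this lemma; it is quoted verbatim from \cite[Theorem~4.3]{QZ18} and used as a black box, so there is no in-paper argument to compare against. That said, your sketch deserves a substantive look on its own merits.

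Your forward direction is on the right track: continuity of the Besicovitch pseudo-metric $\rho$ under mean equicontinuity, together with $T$-invariance of $\rho$, does force $\rho(x,y)=0$ for every regionally proximal pair. The Ces\`aro-to-Banach upgrade you flag is a real issue, essentially equivalent to the (nontrivial) fact that Besicovitch mean equicontinuity coincides with Weyl mean equicontinuity; your covering heuristic points in the right direction but does not close the gap, because knowing only $\rho(x,y)=0$ gives lower density one for the ``good'' set, not lower \emph{Banach} density one, and mean equicontinuity applied at a good index $k_0$ again only controls a $\limsup$ of Ces\`aro averages, not the specific finite block you want.

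The backward direction has a more serious problem. From $d(x_n,y_n)\to 0$ and $(T^{k_n}x_n,T^{k_n}y_n)\to(u,v)$ you conclude $(u,v)\in Q(X,T)$, but this is the wrong way round: regional proximality of $(u,v)$ asks for points \emph{near $(u,v)$} whose forward iterates become close, whereas you have produced points \emph{near the diagonal} whose forward iterates approach $(u,v)$. What you have actually shown is that $(u,v)$ lies in $\bigcap_{\varepsilon>0}\overline{\bigcup_{k\ge 0}(T\times T)^k(\Delta_\varepsilon)}$, which for homeomorphisms is $Q(X,T^{-1})$, not $Q(X,T)$; these need not agree outside the minimal setting, and the lemma is stated for arbitrary (possibly non-invertible) systems. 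Without an independent argument that your limit pair is regionally proximal for $T$, the contradiction with $Q(X,T)=BP(X,T)$ does not go through.
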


\begin{prop}\label{prop:pubd-mean}
Let $(X,T)$ be a minimal system. Then the follwing statements are equivalent:
\begin{enumerate}
\item $(X,T)$ is broken $\calf_{pubd}$-sensitive; 
\item $(X,T)$ is not mean equicontinuous;
\item $(X,T)$ is not a Banach proximal extension to its maximal equicontinuous factor.
\end{enumerate}
\end{prop}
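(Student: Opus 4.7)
The plan is to recast all three conditions as statements about the relations $Q(X,T)$ and $BP(X,T)$ on the minimal system and then read off their equivalence. Since $(X,T)$ is minimal, $Q(X,T)$ is already a closed invariant equivalence relation, so $R_\pi = Q(X,T)$ for the factor map $\pi\colon (X,T)\to (X_{eq},T_{eq})$, and the paper already records $BP(X,T) \subset P(X,T) \subset Q(X,T)$.

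For (2) $\Leftrightarrow$ (3) I would argue directly: $(X,T)$ is a Banach proximal extension of $(X_{eq},T_{eq})$ iff $BP(X,T) \supset R_\pi = Q(X,T)$, which combined with the chain above is equivalent to $BP(X,T) = P(X,T) = Q(X,T)$. By Lemma \ref{lem: Q-P-BP} the latter identity is precisely mean equicontinuity of $(X,T)$, so negating gives the equivalence.

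For (1) $\Leftrightarrow$ (3) I would apply Theorem \ref{thm:broken=} with $n=2$, together with the identity $S_2^e(X,T) = Q(X,T)\setminus \Delta_2(X)$ obtained from Lemma \ref{lem:Sn-Qn} (since $Q_2(X,T)=Q(X,T)$). Condition (1) is then equivalent to the existence of a pair $(x_1,x_2) \in Q(X,T)$ with $x_1 \neq x_2$ which is an $\calf_{pubd}$-recurrent point of $(X^2,T^{(2)})$. For the forward direction, any such pair fails to be Banach proximal: choosing a neighbourhood $W$ of $(x_1,x_2)$ on which the two coordinates stay more than $d(x_1,x_2)/2$ apart, the recurrence $N((x_1,x_2),W)\in\calf_{pubd}$ forces $\{n\in\bbz_+ \colon d(T^nx_1,T^nx_2)>d(x_1,x_2)/2\}\in\calf_{pubd}$, so $R_\pi = Q(X,T) \not\subset BP(X,T)$. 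For the backward direction, starting from $(x_1,x_2)\in Q(X,T)\setminus BP(X,T)$ non-Banach-proximality supplies $\eps>0$ with $F := \{n\colon d(T^nx_1,T^nx_2)\geq \eps\}\in\calf_{pubd}$. I would then apply Lemma \ref{cor:calf-recurrent}(1) to $(x_1,x_2) \in X^2$ along $F$ to extract an $\calf_{pubd}$-recurrent point $(y_1,y_2)\in \overline{\{(T^nx_1,T^nx_2)\colon n\in F\}}$. By construction $d(y_1,y_2)\geq \eps$, so $(y_1,y_2)\notin \Delta_2(X)$, and since $(x_1,x_2)\in S_2^e(X,T)$ and $S_2(X,T)\cup\Delta_2(X)$ is closed and $T^{(2)}$-invariant, $(y_1,y_2)\in S_2(X,T)\cup\Delta_2(X)$, hence $(y_1,y_2)\in S_2^e(X,T)$. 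Theorem \ref{thm:broken=} then delivers (1).

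The main technical step is the last construction: pivoting from a bare non-Banach-proximal regionally proximal pair to an essential sensitive pair that is genuinely $\calf_{pubd}$-recurrent. The key trick is to pre-select the positive upper Banach density window $F$ on which the orbit of $(x_1,x_2)$ stays $\eps$-separated from the diagonal, so that the $\calf_{pubd}$-recurrent point produced by Lemma \ref{cor:calf-recurrent} is automatically non-diagonal, and essentiality is then delivered for free by the closedness and $T^{(2)}$-invariance of $S_2(X,T)\cup\Delta_2(X)$.
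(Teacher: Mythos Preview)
Your proposal is correct and matches the paper's proof in substance: both rely on $R_\pi=Q(X,T)$ for minimal systems, Lemma~\ref{lem: Q-P-BP} to translate mean equicontinuity into $Q(X,T)=BP(X,T)$, Lemma~\ref{lem:Sn-Qn} to identify $S_2^e(X,T)$ with $Q(X,T)\setminus\Delta_2(X)$, and Lemma~\ref{cor:calf-recurrent} applied along the $\calf_{pubd}$-set where the orbit stays $\eps$-separated to upgrade a non-Banach-proximal pair to an $\calf_{pubd}$-recurrent essential sensitive pair. The only difference is organizational---the paper argues via the cycle $(1)\Rightarrow(2)\Rightarrow(3)\Rightarrow(1)$ while you prove $(2)\Leftrightarrow(3)$ and $(1)\Leftrightarrow(3)$ directly---but the ingredients and key step are identical.
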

\begin{proof}
Let $\pi:(X,T)\to(X_{eq},T_{eq})$ be the factor map to its maximal equicontinuous factor.

$(1)\Rightarrow(2)$
Assume that $(X,T)$ is mean equicontinuous. By Lemma~\ref{lem: Q-P-BP}, $Q(X,T)=P(X,T)=BP(X,T)$. By Lemma~\ref{lem:Sn-Qn}, $S_2(X,T)\subset Q(X,T)$. Since $(X,T)$ is broken $\calf_{pubd}$-sensitive, by Theorem~\ref{thm:broken=} there is a sensitive pair $(x,y)$ 
which is an $\calf_{pubd}$-recurrent point of $X\times X$. Then 
$(x,y)\in S_2(X,T)\subset Q(X,T)=BP(X,T)$, which means for any $\eps>0$, $d(T^mx,T^my)<\eps$ for all $m\in\bbn$ except a set of zero Banach density. Let $\delta=\frac{d(x,y)}{3}$. Since $(x,y)$ is an $\calf_{pubd}$-recurrent point, there exists $F\in\calf_{pubd}$ such that for any $n\in F$, $(T^nx, T^ny)\in B(x,\delta)\times B(y,\delta)$, thus $d(T^nx, T^ny)>\delta$. This is a contradiction.

$(2)\Rightarrow(3)$
Assume that $(X,T)$ is not mean equicontinuous. By Lemma~\ref{lem: Q-P-BP}, $Q(X,T)\setminus BP(X,T)\not=\emptyset$. 
Since $R_{\pi}=Q(X,T)$ in minimal systems,
there exist $x_1,x_2\in X$ 
with $\pi(x_1)=\pi(x_2)$ 
such that $(x_1,x_2)\not\in BP(X,T)$, 
which implies that $(X,T)$ is not a Banach proximal extension to its maximal equicontinuous factor.

$(3)\Rightarrow(1)$
Assume that $(X,T)$ is not a Banach proximal extension to its maximal equicontinuous factor. Then $R_{\pi}\setminus BP(X,T)\not=\emptyset$. Since $R_{\pi}=Q(X,T)$ in minimal systems,
there exists $(x_1,x_2)\in Q(X,T)\setminus BP(X,T)$.
By Lemma~\ref{lem:Sn-Qn} $(x_1,x_2)\in S_2^e(X,T)$, and there exists $\delta>0$ and $F\in\calf_{pubd}$  such that
\[d(T^k x_1, T^k x_2)\geq\delta,\ \forall k\in F.\]
By Corollary~\ref{cor:calf-recurrent}, there exists an $\calf_{pubd}$-recurrent point $(y_1,y_2)\in \overline{(T^{(2)})^F(x_1,x_2)}$.
Since
$S_2(X,T)\cup \Delta_2(X)$ is a $T^{(2)}$-invariant closed set,
$\overline{(T^{(2)})^F(x_1,x_2)}\subset S_2(X,T)\cup \Delta_2(X)$.
It is clear that $\overline{(T^{(2)})^F(x_1,x_2)}\cap \Delta_2(X)=\emptyset$, thus
$(y_1,y_2)$ is an essential sensitive tuple. 
By Theorem~\ref{thm:broken=}, $(X,T)$ is broken $\calf_{pubd}$-sensitive.
\end{proof}

\subsection{Broken \texorpdfstring{$\calf_{inf}$-$n$}{inf-n}-sensitivity}

In this subsection we will discuss broken $\calf_{inf}$-$n$-sensitive and prove Theorem~\ref{thm:inf}. Besides, we give a specific property of broken $\calf_{inf}$-sensitivity.

\begin{proof}[Proof of Theorem~\ref{thm:inf}]
	$(\Rightarrow)$
	Fix $x\in \tran(X)$ and let $U_m$ be the neighborhood of $x$ such that $\diam(U_m)<\frac{1}{m}$.
	By the definition of broken
	$\calf_{inf}$-$n$-sensitive,
	there exist $\delta>0$ and $F=\{n_1, n_2,\dotsc\}\in\calf_{inf}$ with
	$n_1<n_2<\dotsc$, take $l_m\in \bbn$ such that $F\cap[1,l_m]=\{n_1, \dotsc, n_m\}$,
	then there exist $x_1^m,x_2^m,\dotsc,x_n^m\in U_m$ and $p_m\in \bbn$ such that
	\[
	\min_{1\le i<j\le n}d(T^k x_i^m, T^k x_j^m)\geq \delta,\ \forall k\in p_m+ F\cap[1,l_m].
	\]
	Then we have
	\[\min_{1\le i<j\le n}d(T^k x_i^m, T^k x_j^m)\geq \delta,\ \forall k\in \{p_m+n_{1},\dotsc,p_m+n_{m}\}.\]
	That is
	\begin{equation}
	\min_{1\le i<j\le n}d(T^k( T^{p_m}x_i^m), T^k (T^{p_m}x_j^m))\geq \delta,\ \forall k\in \{n_{1},\dotsc,n_{m}\}.
    \end{equation}
	Denote $x_i$  the limit of $T^{p_m}{x_i^m}$ as $m\to\infty$, for $i=1,2,\dotsc,n$.
	Then for any $k\in F=\{n_1, n_2,\dotsc\}$
	\[\min_{1\leq i<j\leq n}d(T^{k}x_i,T^{k}x_j)\geq \delta>0. \]
	And therefore
	\[\limsup_{k\to\infty}\min_{1\leq i<j\leq n}d(T^kx_i,T^kx_j)>0. \]
	
	It is clear that $x\in L(x_1,x_2,\dotsc,x_n)$. Since $x$ is a transitive point, by the property of $L(x_1,x_2,\dotsc,x_n)$, $X=L(x_1,x_2,\dotsc,x_n)$ and
	we have $(x_1,x_2,\dotsc,x_n)\in S_n^e(X,T)$.
	
	$(\Leftarrow)$
	Since there exists a strictly increasing subsequence $\{k_m\}$ of $\bbn$ such that
	\[3\delta=\lim_{m\to\infty}\min_{1\leq i<j\leq n}d(T^{k_m}x_i,T^{k_m}x_j)>0. \]
    Without loss of generality, we assume that for any $m\in \bbn$,
    $$\min_{1\leq i<j\leq n}d(T^{k_m}x_i,T^{k_m}x_j)>2\delta.$$
	By continuity,
	there exist neighborhoods $U_i^m$ of $x_i$ such that
	$$\min_{1\le i<j\le n}d(T^{k}U_i^m,T^{k}U_j^m)>2\delta, \ \text{for any}\ k\in\{k_1,\dotsc,k_m\}.$$

     Let $F=\{k_1, k_2,\dotsc\}\in\calf_{inf}$. For any opene set $U$ and $l\in \bbn$, there exists $m=m(l)\in \N$ such
     that $F\cap [1,l]=\{k_1, k_2,\dotsc,k_m\}$.
	Since $(x_1,x_2,\dotsc,x_n)$ is an $n$-sensitive tuple, there exist
	$y_1^l,\dotsc,y_n^l\in U$ and $q_l$
	such that $T^{q_l}y_i^l\in U_i^{m}$ and
	thus
	\[\min_{1\le i<j\le n}d(T^k(T^{q_l}y_i^l),T^k(T^{q_l}y_j^l))>\delta,\ \text{for any}\ k\in\{k_1,\dotsc,k_m\}.\]
	That is
	\[\min_{1\le i<j\le n}d(T^ky_i^l,T^ky_j^l)>\delta,\ \text{for any}\ k\in q_l+F\cap[1,l].\]
Thus $(X,T)$ is	broken
	$\calf_{inf}$-$n$-sensitive.
\end{proof}

For minimal systems, maximal equicontinuous factor is important for studying some dynamical properties, especially for sensitivity. Next proposition is a specific property of broken $\calf_{inf}$-sensitivity.

\begin{prop}\label{cor: inf}
	Let $(X,T)$ be a minimal system.
	Then  $(X,T)$ is broken $\calf_{inf}$-sensitive
	if and only if it is not an asymptotic extension of its maximal equicontinuous factor.
\end{prop}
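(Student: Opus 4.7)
The plan is to reduce the proposition to a direct application of Theorem~\ref{thm:inf}, combined with the identification $R_\pi=Q(X,T)$ that holds in every minimal system and the inclusion between sensitive pairs and regionally proximal pairs from Lemma~\ref{lem:Sn-Qn}. Let $\pi\colon (X,T)\to (X_{eq},T_{eq})$ denote the factor map to the maximal equicontinuous factor.

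For the forward direction, I would start from broken $\calf_{inf}$-sensitivity and apply Theorem~\ref{thm:inf} with $n=2$ to obtain an essential sensitive pair $(x_1,x_2)$ with
\[
\limsup_{k\to\infty} d(T^kx_1,T^kx_2)>0.
\]
By Lemma~\ref{lem:Sn-Qn}, $(x_1,x_2)\in S_2(X,T)\subset Q(X,T)$; and since $(X,T)$ is minimal, $R_\pi=Q(X,T)$, so $(x_1,x_2)\in R_\pi$. The $\limsup$ condition says precisely that $(x_1,x_2)\notin AS(X,T)$, so $R_\pi\not\subset AS(X,T)$, i.e., $\pi$ is not an asymptotic extension.

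For the converse, I would take a pair $(x_1,x_2)\in R_\pi\setminus AS(X,T)$. The non-asymptoticity gives $x_1\ne x_2$ and $\limsup_{k\to\infty} d(T^kx_1,T^kx_2)>0$. Using $R_\pi=Q(X,T)$ again, $(x_1,x_2)\in Q(X,T)$; then Lemma~\ref{lem:Sn-Qn} yields $(x_1,x_2)\in Q_2(X,T)\setminus \Delta_2(X)=S_2(X,T)$, so it is an essential sensitive pair. Theorem~\ref{thm:inf} then immediately gives that $(X,T)$ is broken $\calf_{inf}$-sensitive.

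No step is particularly delicate; the only point to check carefully is that the quantitative condition characterising broken $\calf_{inf}$-$2$-sensitivity from Theorem~\ref{thm:inf}, namely $\limsup_{k\to\infty} d(T^kx_1,T^kx_2)>0$, is exactly the negation of $(x_1,x_2)\in AS(X,T)$, so the two sides of the equivalence match verbatim once we pass through $R_\pi=Q(X,T)$.
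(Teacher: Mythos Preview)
Your proof is correct and follows essentially the same route as the paper's: both directions hinge on Theorem~\ref{thm:inf} (with $n=2$), the inclusion $S_2(X,T)\subset Q(X,T)$ from Lemma~\ref{lem:Sn-Qn}, and the identification $R_\pi=Q(X,T)$ in minimal systems. The only cosmetic difference is that in the forward direction the paper uses merely $Q(X,T)\subset R_\pi$ (which needs no minimality), whereas you invoke the full equality; this changes nothing substantive.
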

\begin{proof}
$(\Rightarrow)$
Since $(X,T)$ is broken $\calf_{inf}$-sensitive,
by Theorem~\ref{thm:inf}, there exists a sensitive pair $(x_1,x_2)$ such that
\[\limsup_{k\to\infty}d(T^kx_1,T^kx_2)>0. \]
By Lemma~\ref{lem:Sn-Qn}, $S_2(X,T)\subset Q(X,T)$. Let $\pi: (X,T)\to (X_{eq}, T_{eq})$ be the factor map to its maximal equicontinuous factor. Then we have  $Q(X,T)\subset R_{\pi}$. So we have 
$\pi(x_1)=\pi(x_2)$ such that
\[\limsup_{k\to\infty}d(T^kx_1,T^kx_2)>0, \]
i.e. $R_{\pi}\not\subset AS(X,T)$,
which implies that $\pi$ is not an asymptotic extension.

$(\Leftarrow)$
Let $\pi: (X,T)\to (X_{eq}, T_{eq})$ be a factor map to its maximal equicontinuous factor. Then $R_{\pi}\setminus AS(X,T)\not=\emptyset$, i.e.
there exist $x_1, x_2$ with
$\pi(x_1)=\pi(x_2)$ such that
\[\limsup_{k\to\infty}d(T^kx_1,T^kx_2)>0, \]
Since $(X,T)$ is minimal, we have  $R_{\pi}=Q(X,T)$.
By Lemma~\ref{lem:Sn-Qn}, $S_2(X,T)=Q(X,T)\setminus\Delta_{2}(X)$.
Thus $(x_1,x_2)\in
R_{\pi}\setminus\Delta_{2}(X)= Q(X,T)\setminus\Delta_{2}(X)=S_2(X,T)$.
By Theorem~\ref{thm:inf} $(X,T)$ is broken $\calf_{inf}$-sensitive.
\end{proof}

\subsection{Examples for broken family sensitivity in minimal systems}\label{subsection-examples}
The following implications follow from the definitions, we show that all the implications in the diagram are strict in minimal systems. (sen. is short for sensitivity)
\begin{equation*}
\begin{array}{cccc}
\ \ \text{broken $\calf_{ps}$-$n$-sen.} \   \ \   \Rightarrow &  \ \ \ \text{broken $\calf_{pubd}$-$n$-sen.} \   \ \  \Rightarrow &  \text{broken $\calf_{inf}$-$n$-sen.}\\
\Uparrow & \Uparrow & \Uparrow  \\
\text{broken $\calf_{ps}$-$(n+1)$-sen.} \Rightarrow & \text{broken $\calf_{pubd}$-$(n+1)$-sen.} \Rightarrow &  \text{broken $\calf_{inf}$-$(n+1)$-sen.}
\end{array}
\end{equation*}

\begin{exam}
	[broken $\calf_{pubd}$-sensitivity $\not\Rightarrow$ broken $\calf_{ps}$-sensitivity]
	In \cite[Example $10.3$]{Downarowicz2005},
	The authors constructed a Toeplitz flow $(X,T)$ which is a minimal system having two ergodic
	measures. Furthermore, it is an almost one to one extension of odometers, which is its maximal equicontinuous factor.
	Since every minimal mean equicontinuous system is uniquely ergodic (see \cite{Fomin51}), $(X,T)$ is not
	mean equicontinuous. By proposition \ref{prop:pubd-mean},
	$(X,T)$ is broken $\calf_{pubd}$-sensitive. By Corollary~\ref{cor:ps-proximal-exten},  a minimal system is not broken $\calf_{ps}$-sensitive if and only if it is a proximal extension to its maximal equicontinuous factor. Since every almost one to one extension is proximal, 
	the Toeplitz flow $(X,T)$ is not broken $\calf_{ps}$-sensitive.
\end{exam}

\begin{exam}
	[broken $\calf_{inf}$-sensitivity $\not\Rightarrow$ broken $\calf_{pubd}$-sensitivity]
	In \cite[Theorem 3.1]{Downarowicz2016},
	the authors reveal that there exists a minimal mean equicontinuous system $(X,T)$ which is not an almost one to one extension to its maximal equicontinuous factor. Since every asymptotic extension is almost one to one,  
	$(X,T)$ is not an asymptotic extension of its maximal equicontinuous factor.
	By Corollary~\ref{cor: inf} and Proposition~\ref{prop:pubd-mean}, $(X,T)$ is broken $\calf_{inf}$-sensitive but not broken $\calf_{pubd}$-sensitive.
\end{exam}

\begin{exam}
	[broken $\calf_{ps}$-$n$-sensitivity $\not\Rightarrow$ broken $\calf_{pubd}$-$(n+1)$-sensitivity]
	In \cite[Example $2$ of Subsection $6.2$]{MShao07}, for every $n\geq 2$,
	the authors constructed an example which is called $n$-Morse system. We will show that $n$-Morse system is
	broken $\calf_{ps}$-$n$-sensitive but not broken $\calf_{pubd}$-$(n+1)$-sensitive.

	Define the substitution map $\tau$ on $\{0,1\dots,n-1\}$ such that
	$\tau(0) = 01\ldots (n-1),$ $\tau(1) =12\ldots 0,\dots,$ $\tau(n-1) =(n-1)0\ldots (n-2)$. By concatenating, this map can act on any finite word
	$w = w_0w_1\dots w_{l-1}$ in
	$\{0, 1,\dots,n-1\}^l$ by $ \tau(w) = \tau(w_0)\tau(w_1) \ldots \tau(w_{l-1})$.
	Let $\Sigma_n=\{0,1,\ldots,n-1\}^\Z$ and $ X\subset\Sigma_n$ be the set of bi-infinite $n$-ary sequences $x$ in $\Sigma_n$ such that
	any finite word of $x$ is a subword of $\tau^k(0)$ for some $k\in \N$. Obviously $X$ is
	closed in $\Sigma_n$ and is invariant under the  shift map $\sigma$. We still denote by $\sigma$ the shift map restricted on $X$ and call $(X,\sigma)$ \emph{$n$-Morse system}. It is well known that $(X, \sigma)$ is minimal and has the following structure:
	$\pi_1: (X,\sigma)\ra (Y,S)$ and $\pi_2: (Y,S)\ra (X_{eq},\sigma_{eq})$, where $\pi_1$ is an $n$-to-one distal extension and $\pi_2$ is
	an asymptotic extension.
	
	$(1)$ $n$-Morse system is
	broken $\calf_{ps}$-$n$-sensitive.
	Since $\pi_1: (X,\sigma)\to (Y,S)$ is  an $n$-to-one distal extension, there exists a minimal point
	$$(x_1,\dotsc,x_n)\in R_{\pi_1}^n(X,\sigma)\setminus\Delta^{(n)}(X).$$
	Since $(x_i,x_j)\in R_{\pi_2\circ\pi_1}(X,\sigma)$
	for any $1\le i, j\le n$ and $R_{\pi_2\circ\pi_1}(X,\sigma)=Q(X,\sigma)$, by Theorem~\ref
	{thm:Q-Q-n} $(x_1,\dotsc,x_n)\in Q_n(X,\sigma)$, and thus $(x_1,\dotsc,x_n)\in S_n^e(X,\sigma)$ by Lemma~\ref{lem:Sn-Qn}.
	By Theorem~\ref{main1}, $n$-Morse system $(X,\sigma)$ is broken $\calf_{ps}$-$n$-sensitive.
	
	$(2)$
	$n$-Morse system is
	not broken $\calf_{pubd}$-$(n+1)$-sensitive.
	Assume that $(X,\sigma)$ is broken $\calf_{pubd}$-$(n+1)$-sensitive, then by Theorem~\ref{thm:broken=} there exists
	an essential $(n+1)$-sensitive tuple $(x_1,\dotsc,x_{n+1})$ which is an $\calf_{pubd}$-recurrent point of $(X^{n+1},\sigma^{(n+1)})$, and thus $(x_1,\dotsc,x_{n+1})\in Q_{n+1}(X,\sigma)$ by Lemma~\ref{lem:Sn-Qn}. 
	Then $\pi_2\circ\pi_1(x_i)=\pi_2\circ\pi_1(x_j)$ for any $1\le i, j\le n+1$ 
	as $R_{\pi_2\circ\pi_1}(X,\sigma)=Q(X,\sigma)$. Since $\pi_1$ is $n$-to-one,  there exist some $i\not= j\in\{1,\dotsc,n+1\}$ such that $\pi_1(x_i)\not=\pi_1(x_j)$. Without loss of generality we assume that $\pi_1(x_1)\not=\pi_1(x_2)$.
	Then we have $(\pi_1(x_1),\pi_1(x_2))$ is an $\calf_{pubd}$-recurrent point of $(Y^2,S^{(2)})$ as
	$(x_1,\dotsc,x_{n+1})$ is an $\calf_{pubd}$-recurrent point of $(X^{n+1},\sigma^{(n+1)})$.
	This leads to a contradiction since  $\pi_2\circ\pi_1(x_1)=\pi_2\circ\pi_1(x_2)$ and $\pi_2$
	is an asymptotic extension.
\end{exam}

\begin{exam}[broken $\calf_{inf}$-$2$-sensitivity $\not\Rightarrow$ broken $\calf_{inf}$-$3$-sensitivity]
	For Morse system $(X,\sigma)$, we show that $(X,\sigma)$ is not broken $\calf_{inf}$-$3$-sensitive, then we know that broken $\calf_{inf}$-$2$-sensitive $\not\Rightarrow$ broken $\calf_{inf}$-$3$-sensitive.
	
	The Morse sequence $\omega(n)$:
	$0110100110010110\dotsc $
	can be described by the following algorithms.
	$$\omega(0)=0, \omega(2n)=\omega(n), \omega(2n+1)=1-\omega(n) (n\in \N).$$
	Considering $\omega$ as an element of $\Sigma_2=\{0,1\}^{\Z}$ where $\omega(-n)=\omega(n-1)$, then $\omega \in (X,\sigma)$.
	
	For $\xi \in \Sigma_2$, define
	the homeomorphism $\varphi:\xi \ra \overline{\xi}$
	where $\overline{\xi}(n)=\overline{\xi(n)}$ ($\overline{0}=1, \overline{1}= 0$).
	Then $\varphi(x)\in X$ for any $x\in X$.
	Define $\eta \in \Sigma_2$  by $\eta(n)=\omega(n)$ for $n\geq 0$ and
	$\eta(n)=\overline{\omega(n)}$ for $n< 0$,  then we have $\eta \in X$.
	
	From \cite{Glasner1989}, we
	note that for any $n\in \bbz$, 
	$Q(X,\sigma)[\sigma^n \omega]=\{\sigma^n \omega, \sigma^n \eta, \overline{\sigma^n \omega}, \overline{\sigma^n \eta}  \}$, 
	and for any $x\in X\setminus \{\sigma^n \omega, \sigma^n \eta, \overline{\sigma^n \omega}, \overline{\sigma^n \eta} : n\in \bbz \}$,
	$Q(X,\sigma)[x]=\{x,\overline{x} \}$. It is easy to see that $(\overline{\sigma^n \omega}, \overline{\sigma^n \eta})$ and
	$(\sigma^n \omega, \sigma^n \eta)$ are both asymptotic pairs of $(X,\sigma)$, so there do not exist an essential $3$-sensitive tuple $(x_1,x_2,x_3)$
	in $(X,\sigma)$ such that
	$$\limsup_{k\to\infty}\min_{1\leq i<j\leq 3}d(\sigma^kx_i,\sigma^kx_j)>0.$$
	By theorem \ref{thm:inf}, 	$(X,\sigma)$ is not broken $\calf_{inf}$-$3$-sensitive.
\end{exam}

\section{Broken family sensitivity in weakly mixing systems}
In this section, we devote to
investigate broken $\calf$-$n$-sensitive 
for
$\calf=\calf_{inf}, \calf_{ps}\ \text{and}\ \calf_{pubd}$
in weakly mixing systems. We will give some 
equivalent characterizations for them. Examples of these kinds of broken family sensitivity in weakly mixing systems are also provided in this section.

\subsection{Broken family sensitivity in weakly mixing systems}\label{section-weakmixing}

\begin{lem}\label{lem:Sn-Wm}\cite[Theorem$3.2$]{YZ08}
	Let $(X,T)$ be a dynamical system. Then the following statements are equivalent:
	\begin{enumerate}
		\item $(X,T)$ is weakly mixing;
		\item for every $n\geq 2$, $S_n(X,T)\cup \Delta_n(X)=X^n$;
		\item there exists $n\geq 2$ such that $S_n(X,T)\cup \Delta_n(X)=X^n$.
	\end{enumerate}
\end{lem}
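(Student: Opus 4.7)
The plan is to prove the cycle $(1) \Rightarrow (2) \Rightarrow (3) \Rightarrow (1)$, with $(2) \Rightarrow (3)$ being the trivial specialization to $n = 2$; only $(1) \Rightarrow (2)$ and $(3) \Rightarrow (1)$ carry content.

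For $(1) \Rightarrow (2)$, I would use Lemma~\ref{lem:weakly-mixing-n} to promote weak mixing to transitivity of $(X^n, T^{(n)})$ for every $n \geq 2$. Given a non-diagonal tuple $(x_1,\dotsc,x_n) \in X^n \setminus \Delta_n(X)$, an opene $U \subset X$, and open neighborhoods $U_i$ of $x_i$, transitivity of $(X^n, T^{(n)})$ applied to the opene sets $U \times \dotsb \times U$ and $U_1 \times \dotsb \times U_n$ supplies some $m \in \N$ and $(y_1,\dotsc,y_n) \in U \times \dotsb \times U$ with $T^m y_i \in U_i$ for each $i$, which is exactly the definition of sensitivity for $(x_1,\dotsc,x_n)$.

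For $(3) \Rightarrow (1)$, I would proceed in two steps. \emph{Reduction to $n = 2$.} Given any pair $(y_1,y_2) \in X^2 \setminus \Delta_2(X)$, the padded tuple $(y_1,y_2,y_1,\dotsc,y_1) \in X^n$ is not in $\Delta_n(X)$ (since $y_1 \neq y_2$), so it is a sensitive tuple by hypothesis; feeding the $n$-sensitivity definition with target neighborhoods $U_1, U_2, X, X, \dotsc, X$ at once yields $(y_1, y_2) \in S_2(X,T)$, so $S_2(X,T) \cup \Delta_2(X) = X^2$. \emph{From $2$-sensitivity to weak mixing.} I would invoke Furstenberg's intersection criterion, which asserts that $(X,T)$ is weakly mixing iff $N(U,V_1) \cap N(U,V_2) \neq \emptyset$ for all opene $U, V_1, V_2 \subset X$. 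Given such $U, V_1, V_2$, pick distinct $y_1 \in V_1$, $y_2 \in V_2$ and apply sensitivity of $(y_1,y_2)$ with opene set $U$ and neighborhoods $V_1, V_2$; this produces $m \in \N$ and $z_1, z_2 \in U$ with $T^m z_i \in V_i$, so $m \in N(U,V_1) \cap N(U,V_2)$.

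The main subtlety lies in the second step of $(3) \Rightarrow (1)$: sensitivity of $(y_1,y_2)$ only produces $z_1, z_2$ in a \emph{common} opene $U$, which fits the Furstenberg form of weak mixing precisely, but not the naive transitivity of $X \times X$ (which would ask for two independent opene sets). A technical nuisance is the choice of distinct $y_1 \in V_1, y_2 \in V_2$: this is automatic when $V_1 \cap V_2 = \emptyset$, and otherwise one observes that condition (3) itself forces $X$ to be perfect whenever $|X| \geq 2$, since an isolated point $p$ together with the opene $U = \{p\}$ would obstruct sensitivity of any pair containing $p$; the case $|X| = 1$ is vacuous. If one prefers to avoid citing Furstenberg, the same argument can be rerouted through a direct verification that $(X^2, T^{(2)})$ is transitive, by first using (3)-induced transitivity of $(X,T)$ to find $k \in N(U_1, U_2)$, setting $W = U_1 \cap T^{-k} U_2$, and then applying sensitivity of a suitable pair $(y_1, y_2) \in V_1 \times T^{-k} V_2$ to $W$.
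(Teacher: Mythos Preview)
The paper does not prove this lemma: it is quoted verbatim from \cite[Theorem~3.2]{YZ08} and used as a black box, so there is no ``paper's own proof'' to compare against. Your argument is correct and is essentially the standard proof one finds in the literature: $(1)\Rightarrow(2)$ via transitivity of the $n$-fold product (Lemma~\ref{lem:weakly-mixing-n}), the reduction from $n$ to $2$ by padding, and the recovery of weak mixing from $S_2(X,T)\cup\Delta_2(X)=X^2$ via Furstenberg's intersection criterion. The care you take over the existence of distinct $y_1\in V_1$, $y_2\in V_2$ (forcing $X$ perfect from condition~(3)) is the only genuine subtlety, and you handle it correctly.
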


\begin{prop}\label{prop:weak-inf}
	Let $(X,T)$ be a non-trivial weakly mixing system. Then $(X,T)$ is broken $\calf_{inf}$-$n$-sensitive for any $n\geq 2$.
\end{prop}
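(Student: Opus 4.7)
The strategy is to apply Theorem \ref{thm:inf}: I need to produce an essential $n$-sensitive tuple $(x_1,\dotsc,x_n)$ in $(X,T)$ satisfying
\[
\limsup_{k\to\infty}\min_{1\leq i<j\leq n} d(T^k x_i, T^k x_j) > 0.
\]
Since $(X,T)$ is weakly mixing, Lemma \ref{lem:weakly-mixing-n} yields that $(X^n, T^{(n)})$ is transitive, so $\tran(X^n, T^{(n)})$ is a dense $G_\delta$ subset of $X^n$. Moreover, any non-trivial weakly mixing system must be infinite (a transitive system on a finite space is a single periodic cycle, which cannot be weakly mixing once $|X|\geq 2$), so $X$ contains at least $n$ pairwise distinct points and $X^n\setminus \Delta^{(n)}(X)$ is a non-empty open subset of $X^n$.

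I would then pick any transitive point $(x_1,\dotsc,x_n)\in \tran(X^n, T^{(n)})$. Because $\Delta^{(n)}(X)$ is closed and $T^{(n)}$-invariant, if $(x_1,\dotsc,x_n)$ lay in $\Delta^{(n)}(X)$ its entire orbit would too, contradicting the density of the orbit. Hence the coordinates $x_1,\dotsc,x_n$ are pairwise distinct. By Lemma \ref{lem:Sn-Wm}, $S_n(X,T)\cup \Delta_n(X)=X^n$, so $(x_1,\dotsc,x_n)\in S_n(X,T)$, and $(x_1,\dotsc,x_n)$ is an essential $n$-sensitive tuple.

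To verify the $\limsup$ bound, fix pairwise distinct points $y_1,\dotsc,y_n\in X$ and open neighborhoods $V_i$ of $y_i$ such that
\[
\delta:=\min_{1\leq i<j\leq n} d(V_i, V_j)>0.
\]
Since the orbit of $(x_1,\dotsc,x_n)$ is dense in $X^n$, the non-empty open set $V_1\times\dotsb\times V_n$ is visited by this orbit for infinitely many $k\in\bbn$, and for each such $k$ we have $\min_{1\leq i<j\leq n} d(T^k x_i, T^k x_j)>\delta$. This forces $\limsup_{k\to\infty}\min_{1\leq i<j\leq n} d(T^k x_i, T^k x_j)\geq \delta>0$, and Theorem \ref{thm:inf} then concludes that $(X,T)$ is broken $\calf_{inf}$-$n$-sensitive. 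The whole argument is essentially a packaging of Lemmas \ref{lem:weakly-mixing-n} and \ref{lem:Sn-Wm} together with Theorem \ref{thm:inf}; the only point requiring a bit of care is ruling out coincidences among the coordinates of the transitive point, which is handled by the elementary fact that $X$ is infinite.
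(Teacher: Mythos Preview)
Your proof is correct and follows essentially the same approach as the paper: both pick a transitive point of $(X^n,T^{(n)})$ lying off $\Delta^{(n)}(X)$, invoke Lemma~\ref{lem:Sn-Wm} to get an essential $n$-sensitive tuple, and then use Theorem~\ref{thm:inf}. The only cosmetic difference is that the paper phrases the $\limsup$ condition as ``$(x_1,\dotsc,x_n)$ is a recurrent point of $(X^n,T^{(n)})$'' (returning to a separated neighborhood of itself), whereas you use density of the orbit to hit a separated product neighborhood of auxiliary points $y_1,\dotsc,y_n$; these are equivalent observations.
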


\begin{proof}
	Fix $n\geq 2$. Since $(X,T)$ is weakly mixing, by Proposition~\ref{lem:weakly-mixing-n},  $(X^n,T^{(n)})$ is transitive.
	Thus there exists $(x_1,\dotsc, x_n)\in \tran(X^n,T^{(n)})\setminus \Delta^{(n)}(X)$,
	then $(x_1,\dotsc, x_n)$ is a recurrent point in $(X^n,T^{(n)})$.  
	By lemma~\ref{lem:Sn-Wm}, $(x_1,\dotsc, x_n)$ is an essential $n$-sensitive tuple of $(X,T)$.
	By Theorem~\ref{thm:inf} $(X,T)$ is broken $\calf_{inf}$-$n$-sensitive.
\end{proof}

Weak mixing does not imply $\calf_{ps}$-$n$-sensitivity and $\calf_{pubd}$-$n$-sensitivity in general, but we show that
weakly mixing system with a dense set of minimal points (resp. full support) is $\calf_{ps}$-$n$-sensitive (resp. $\calf_{pubd}$-$n$-sensitive).
First we need the following two lemmas.

\begin{lem}\label{lem:M-ps}\cite[Lemma 2.1]{HY05}
	Let $(X,T)$ be a transitive system and $x\in \tran(X,T)$. Then $(X,T)$ is an M-system if and only if for each neighbourhood $U$ of $x$, $N(x,U)\in\calf_{ps}$.
\end{lem}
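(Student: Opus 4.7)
The plan is to prove both directions separately: for $(\Rightarrow)$, combining uniform continuity with syndeticity of returns of a minimal point to a small neighborhood; for $(\Leftarrow)$, applying Lemma~\ref{cor:calf-recurrent}(2) and using that $\calf_s$-recurrent points are precisely minimal (uniformly recurrent) points, so that piecewise syndetic returns automatically produce minimal points in orbit closures.

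For $(\Rightarrow)$, fix a neighborhood $U$ of $x$. Since minimal points are dense, I would pick a minimal point $y\in U$ and then an open $V$ with $y\in V\subset\overline{V}\subset U$. Minimality forces $N(y,V)$ to be syndetic with some bounded gap $N\in\bbn$, and this $N$ is chosen once and for all at the start. Given any $L\in\bbn$, uniform continuity of $T^0,\dotsc,T^L$ furnishes $\eps_L>0$ such that $d(z,y)<\eps_L$ implies $d(T^j z,T^j y)<d(\overline{V},X\setminus U)$ for all $0\le j\le L$. Since $x$ is transitive, some $T^m x$ lies within $\eps_L$ of $y$; then $m+j\in N(x,U)$ for every $j\in N(y,V)\cap[0,L]$. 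Consequently $[m,m+L]\cap N(x,U)$ contains a piece of length $L$ whose gaps are bounded by $N$. Varying $L$ exhibits $N(x,U)$ as piecewise syndetic (same $N$ throughout).

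For $(\Leftarrow)$, let $W$ be any opene subset of $X$; the task is to produce a minimal point inside $W$. Since $x\in\tran(X,T)$, some $T^{n_0}x\in W$. I would shrink to an open $W_1$ with $T^{n_0}x\in W_1\subset\overline{W_1}\subset W$ and pick a neighborhood $U$ of $x$ with $T^{n_0}(U)\subset W_1$, so that $n_0+N(x,U)\subset N(x,W_1)$. Since $\calf_{ps}$ is translation invariant and upward hereditary, the hypothesis gives $F:=N(x,W_1)\in\calf_{ps}$. Then Lemma~\ref{cor:calf-recurrent}(2) produces an $\calf_s$-recurrent point inside $\overline{\{T^n x: n\in F\}}\subset\overline{W_1}\subset W$, and every $\calf_s$-recurrent point is a minimal point. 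Hence every opene set meets the set of minimal points, so $(X,T)$ is an $M$-system.

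The only delicate step is in $(\Rightarrow)$: one must track carefully that the gap bound $N$ is fixed at the outset from the minimality of $y$ and does \emph{not} deteriorate as $L$ grows, even though the continuity radius $\eps_L$ may shrink. This quantifier order (first $N$, then $L$) is exactly what the definition of piecewise syndeticity requires, and it is the only place where the argument is more than formal; once the machinery of Lemma~\ref{cor:calf-recurrent} is in place the $(\Leftarrow)$ direction reduces to a few shrinking-neighborhood manipulations.
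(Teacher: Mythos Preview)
The paper does not supply its own proof of this lemma; it simply cites \cite[Lemma 2.1]{HY05}. So there is no in-paper argument to compare against, and the question reduces to whether your proof is correct. It is.

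Your $(\Rightarrow)$ argument is the standard one: approximate the transitive point by a nearby minimal point $y$, fix the syndetic gap bound $N$ of $N(y,V)$ once, and then for each $L$ use continuity of finitely many iterates to transfer a length-$L$ piece of $N(y,V)$ into $N(x,U)$ via some shift $m_L$. You are right to flag the quantifier order ($N$ before $L$) as the only point requiring care, and you handle it correctly.

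Your $(\Leftarrow)$ direction is a clean use of the paper's own Lemma~\ref{cor:calf-recurrent}(2): push the piecewise syndetic return set forward along the orbit into an arbitrary opene $W$, then extract an $\calf_s$-recurrent (hence minimal) point from the orbit closure along that set. The shrinking $W_1\subset\overline{W_1}\subset W$ and the pullback neighborhood $U$ of $x$ with $T^{n_0}U\subset W_1$ are exactly what is needed to guarantee the limit lands inside $W$, and translation invariance of $\calf_{ps}$ does the rest. This is arguably tidier than going back to the original source, since it stays within the toolkit already assembled in Section~2 of the present paper.
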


\begin{lem}\label{lem:E-banach}\cite[Lemma 3.6]{HKY07}
	Let $(X,T)$ be a transitive system and $x\in \tran(X,T)$. Then $(X,T)$ is an E-system if and only if for each neighbourhood $U$ of $x$, $N(x,U)\in\calf_{pubd}$.
\end{lem}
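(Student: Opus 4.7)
The plan is to prove the two directions of the equivalence separately, using Birkhoff's ergodic theorem and a shadowing argument for the forward direction, and the Krylov--Bogolyubov construction for the converse.

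For the forward direction, suppose $(X,T)$ is an E-system with a fully-supported invariant measure $\mu$. Given a neighbourhood $U$ of the transitive point $x$, pick an opene set $V$ with $\overline{V}\subset U$; then $\mu(V)>0$. By the ergodic decomposition, some ergodic $\nu\in M(X,T)$ satisfies $\nu(V)>0$, and by Birkhoff's pointwise ergodic theorem there exists $y\in X$ with
\[
\lim_{N\to\infty}\frac{1}{N}\#\bigl\{0\le k<N : T^k y\in V\bigr\}=\nu(V)>0.
\]
Now I would use transitivity to shadow the orbit of $y$. For each $l\in\bbn$, pick by uniform continuity of $T^0,\dotsc,T^{l-1}$ a small enough neighbourhood $W_l$ of $y$ such that $T^k W_l\subset U$ whenever $T^k y\in V$ for $k=0,\dotsc,l-1$. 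Since $x$ is transitive, pick $n_l$ with $T^{n_l}x\in W_l$. Then the window $[n_l,n_l+l)$ contains at least $\#\{k<l : T^k y\in V\}$ indices of $N(x,U)$, and dividing by $l$ and letting $l\to\infty$ along a subsequence where the density of visits of $y$ to $V$ approaches $\nu(V)$ shows $\overline{BD}(N(x,U))\ge \nu(V)>0$.

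For the converse, first upgrade the hypothesis: for any opene $W\subset X$, transitivity gives some $n_0$ with $T^{n_0}x\in W$, and continuity yields a neighbourhood $U_0$ of $x$ with $T^{n_0}U_0\subset W$, so $n_0+N(x,U_0)\subset N(x,W)$ and hence $N(x,W)\in\calf_{pubd}$ too. Fix a countable base $\{W_m\}_{m\in\bbn}$ for $X$, and for each $m$ fix an opene $V_m$ with $\overline{V_m}\subset W_m$. Since $N(x,V_m)\in\calf_{pubd}$, pick sequences $n_i^m\in\Z_+$ and $l_i^m\to\infty$ such that
\[
\frac{1}{l_i^m}\#\bigl\{0\le k<l_i^m : T^{n_i^m+k}x\in V_m\bigr\}\ \longrightarrow\ d_m>0.
\]
The empirical measures $\mu_i^m=\frac{1}{l_i^m}\sum_{k=0}^{l_i^m-1}\delta_{T^{n_i^m+k}x}$ satisfy $\|\mu_i^m-T_*\mu_i^m\|\to 0$, so any weak-$*$ limit $\mu_m$ is $T$-invariant; by the portmanteau theorem applied to the closed set $\overline{V_m}\subset W_m$ we get $\mu_m(W_m)\ge\mu_m(\overline{V_m})\ge d_m>0$. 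Finally, the countable convex combination $\mu=\sum_{m=1}^{\infty}2^{-m}\mu_m$ lies in $M(X,T)$ and assigns positive mass to every basic opene set, so $(X,T)$ is an E-system.

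The main obstacle is the forward direction: since $x$ itself need not be $\mu$-generic (or even lie in $\supp(\mu)$ in a useful density sense), one cannot directly apply Birkhoff to $x$. The shadowing trick, using transitivity of $x$ together with joint continuity of the first $l$ iterates of $T$, is what transfers positive density statistics from a generic point of an ergodic component over to the orbit of $x$, and it is the step where care is needed to get Banach density (working in windows of arbitrary length, not just asymptotic density).
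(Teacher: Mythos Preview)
The paper does not give its own proof of this lemma; it is quoted verbatim from \cite[Lemma 3.6]{HKY07} and used as a black box. So there is no in-paper argument to compare against.

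That said, your proof is correct and essentially the standard one. In the forward direction the key point, which you identify correctly, is that the transitive point $x$ itself need not be generic for any invariant measure, so one cannot invoke Birkhoff directly for $x$; your shadowing argument---choosing a generic point $y$ for an ergodic component giving $V$ positive mass, then using uniform continuity of $T^0,\dotsc,T^{l-1}$ to transfer an initial block of $y$'s visit statistics to a window of the orbit of $x$---is exactly the right device, and it naturally produces upper Banach density rather than upper density because the windows $[n_l,n_l+l)$ float. In the converse, the ``upgrade'' step (extending positive upper Banach density of $N(x,U)$ from neighbourhoods $U$ of $x$ to all opene sets, via a single translate) is needed and correctly handled; after that, the Krylov--Bogolyubov construction of an invariant limit of empirical measures along the high-density windows, plus the countable convex combination over a base, is routine. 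One small cosmetic point: when passing to the weak-$*$ limit you appeal to portmanteau on $\overline{V_m}$, which is fine, though one should note explicitly that the intervals $l_i^m\to\infty$ guarantee $T$-invariance of any subsequential limit.
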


Using the lemmas we can characterize
broken $\calf_{ps}$-$n$-sensitivity and broken $\calf_{pubd}$-$n$-sensitivity in weakly mixing systems.

\begin{prop}\label{prop:weak-M-ps}
	Let $(X,T)$ be a non-trivial weakly mixing $M$-system. Then $(X,T)$ is broken $\calf_{ps}$-$n$-sensitive for any $n\geq 2$.
\end{prop}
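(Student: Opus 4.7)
The plan is to reduce the claim to Theorem~\ref{main1}: it suffices to produce an essential $n$-sensitive tuple that is a minimal point (hence $\calf_s$-recurrent, hence $\calf_{ps}$-recurrent) of $(X^n,T^{(n)})$. I would set up the three ingredients needed: (a) the open set $X^n\setminus\Delta^{(n)}(X)$ is non-empty, (b) minimal points are dense in $X^n$, and (c) non-diagonal tuples are automatically $n$-sensitive.

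For (a), observe that if $(X,T)$ is weakly mixing and $X$ is finite, then the diagonal $\Delta_2(X)$ is a proper, closed, $T^{(2)}$-invariant subset of $X^2$, contradicting transitivity of $X^2$ unless $|X|=1$. Since $(X,T)$ is non-trivial, $X$ is therefore infinite, so $X^n\setminus\Delta^{(n)}(X)$ is a non-empty open subset of $X^n$. For (b), Lemma~\ref{lem:weakly-mixing-n} gives that $(X^n,T^{(n)})$ is transitive, and iterating Lemma~\ref{lem: XY-minimal-points} shows that the minimal points of $(X^n,T^{(n)})$ are dense, since by hypothesis $(X,T)$ has a dense set of minimal points. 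Combining (a) and (b), I can pick a minimal point $(x_1,\dotsc,x_n)$ of $(X^n,T^{(n)})$ lying in $X^n\setminus\Delta^{(n)}(X)$; in particular, $x_i\neq x_j$ for $i\neq j$ and $(x_1,\dotsc,x_n)$ is an $\calf_s$-recurrent point of $(X^n,T^{(n)})$.

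For (c), Lemma~\ref{lem:Sn-Wm} applied to the weakly mixing system $(X,T)$ yields $S_n(X,T)\cup\Delta_n(X)=X^n$. Since $\Delta_n(X)\subset \Delta^{(n)}(X)$, the tuple $(x_1,\dotsc,x_n)$ chosen above lies in $S_n(X,T)$, and together with the pairwise distinctness it lies in $S_n^e(X,T)$. Thus $(x_1,\dotsc,x_n)$ is an essential $n$-sensitive tuple which is an $\calf_s$-recurrent point of $(X^n,T^{(n)})$. Condition~(4) of Theorem~\ref{main1} is satisfied, so $(X,T)$ is broken $\calf_{ps}$-$n$-sensitive.

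The main (minor) obstacle is verifying that the open set of non-diagonal tuples actually contains a minimal point of the product; this is handled by combining the $M$-system hypothesis with an induction on $n$ using Lemma~\ref{lem: XY-minimal-points}. Once density of minimal points in $X^n$ is in hand, the argument is a straightforward bookkeeping combination of Lemmas~\ref{lem:weakly-mixing-n},~\ref{lem: XY-minimal-points},~\ref{lem:Sn-Wm} and Theorem~\ref{main1}.
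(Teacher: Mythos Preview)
Your proof is correct. Both you and the paper first show that $(X^n,T^{(n)})$ is a transitive $M$-system via Lemmas~\ref{lem:weakly-mixing-n} and~\ref{lem: XY-minimal-points}, and both use Lemma~\ref{lem:Sn-Wm} to turn an off-diagonal point into an essential $n$-sensitive tuple. The difference is in the choice of witness: the paper selects a \emph{transitive} point of $(X^n,T^{(n)})\setminus\Delta^{(n)}(X)$ and invokes Lemma~\ref{lem:M-ps} to conclude it is $\calf_{ps}$-recurrent, then applies Theorem~\ref{thm:broken=}; you instead select a \emph{minimal} point directly and apply condition~(4) of Theorem~\ref{main1}. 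Your route is slightly more self-contained in that it bypasses Lemma~\ref{lem:M-ps}, while the paper's choice keeps the argument parallel to the $\calf_{pubd}$ case in Proposition~\ref{prop:weak-E-pubd}.
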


\begin{proof}
	Fix $n\geq 2$. Since $(X,T)$ is weakly mixing $M$-system, by Proposition~\ref{lem:weakly-mixing-n} and Lemma~\ref{lem: XY-minimal-points},  $(X^n,T^{(n)})$ is a transitive $M$-system.
	Thus for any $(x_1,\dotsc, x_n)\in \tran(X^n,T^{(n)})\setminus \Delta^{(n)}(X)$,
	$(x_1,\dotsc, x_n)$ is an $\calf_{ps}$-recurrent point by Lemma~\ref{lem:M-ps}. By Lemma~\ref{lem:Sn-Wm}, $(x_1,\dotsc, x_n)$ is an essential $n$-sensitive tuple of $(X,T)$. Therefore by
	Theorem~\ref{thm:broken=} $(X,T)$ is broken $\calf_{ps}$-$n$-sensitive.
\end{proof}

Similar with the proof of Proposition~\ref{prop:weak-M-ps}, we obtain
a characterization for
broken $\calf_{pubd}$-$n$-sensitivity by using Lemma~\ref{lem:E-banach}.

\begin{prop}\label{prop:weak-E-pubd}
	Let $(X,T)$ be a non-trivial weakly mixing $E$-system. Then $(X,T)$ is broken $\calf_{pubd}$-$n$-sensitive for any $n\geq 2$.
\end{prop}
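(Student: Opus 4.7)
The proof plan is to mirror the argument for Proposition~\ref{prop:weak-M-ps} with the M-system ingredient replaced by the E-system ingredient, passing the invariant measure through the product.

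First I would fix $n\geq 2$ and, using Lemma~\ref{lem:weakly-mixing-n}, observe that the product system $(X^n,T^{(n)})$ is transitive. The new point, replacing the use of Lemma~\ref{lem: XY-minimal-points}, is to verify that $(X^n,T^{(n)})$ inherits the E-system property. For this I would take a $T$-invariant Borel probability measure $\mu$ on $X$ with full support (which exists because $(X,T)$ is an E-system) and form the product measure $\mu^{(n)}=\mu\times\dotsb\times\mu$ on $X^n$. This is $T^{(n)}$-invariant, and on any opene set $U\subseteq X^n$, one can find a product of opene sets $U_1\times\dotsb\times U_n\subseteq U$, whence $\mu^{(n)}(U)\geq\prod_{i=1}^{n}\mu(U_i)>0$. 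So $(X^n,T^{(n)})$ is transitive and carries an invariant measure of full support, hence is an E-system.

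Next I would choose $(x_1,\dotsc,x_n)\in\tran(X^n,T^{(n)})\setminus\Delta^{(n)}(X)$. Such a tuple exists because $\tran(X^n,T^{(n)})$ is a dense $G_\delta$ in $X^n$ while $\Delta^{(n)}(X)$ is a proper closed subset (here the non-triviality of $X$ is used). By Lemma~\ref{lem:E-banach} applied in the product system, $(x_1,\dotsc,x_n)$ is an $\calf_{pubd}$-recurrent point of $(X^n,T^{(n)})$. Lemma~\ref{lem:Sn-Wm} gives $S_n(X,T)\cup\Delta_n(X)=X^n$, so in particular $(x_1,\dotsc,x_n)\in S_n^e(X,T)$, since the components are pairwise distinct (the fact that the tuple avoids $\Delta^{(n)}(X)$, not just $\Delta_n(X)$, ensures this).

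Finally I would invoke Theorem~\ref{thm:broken=} with $\calf=\calf_{pubd}$: the existence of an essential $n$-sensitive tuple that is an $\calf_{pubd}$-recurrent point of $(X^n,T^{(n)})$ is equivalent to broken $\calf_{pubd}$-$n$-sensitivity in a transitive system. I do not expect any essential difficulty here; the only substantive step is the full-support product measure argument, which is completely routine. The plan is thus essentially a verbatim transcription of the proof of Proposition~\ref{prop:weak-M-ps}, swapping Lemma~\ref{lem: XY-minimal-points} and Lemma~\ref{lem:M-ps} for the product-measure observation and Lemma~\ref{lem:E-banach}.
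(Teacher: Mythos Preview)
Your proposal is correct and follows essentially the same approach as the paper, which simply remarks that the proof is similar to that of Proposition~\ref{prop:weak-M-ps} with Lemma~\ref{lem:E-banach} in place of Lemma~\ref{lem:M-ps}. The product-measure argument you supply to show $(X^n,T^{(n)})$ is an $E$-system is exactly the missing step the paper leaves implicit, and the rest of your plan tracks the paper's template verbatim.
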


Now we show a necessary and sufficient condition for a non-trivial weakly mixing system to be broken $\calf_{ps}$-$n$-sensitive.

\begin{prop}\label{prop:ps-AP}
	Let $(X,T)$ be a non-trivial weakly mixing system and $n\geq 2$. Then $(X,T)$ is broken $\calf_{ps}$-$n$-sensitive if and only if 
	$(X,T)$ has at least $n$ minimal points.
\end{prop}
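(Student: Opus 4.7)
My plan is to route both directions through Theorem~\ref{main1}, which in a transitive system identifies broken $\calf_{ps}$-$n$-sensitivity with the existence of an essential $n$-sensitive tuple that is an $\calf_s$-recurrent, hence minimal, point of $(X^n,T^{(n)})$. Weak mixing trivialises the sensitivity side of this equivalence: by Lemma~\ref{lem:Sn-Wm} we have $S_n(X,T)\cup\Delta_n(X)=X^n$, so the real question becomes whether $(X^n,T^{(n)})$ admits a minimal point whose $n$ coordinates are pairwise distinct, and I will translate that into the condition on minimal points of $(X,T)$.

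For the forward direction, I assume $(X,T)$ is broken $\calf_{ps}$-$n$-sensitive and apply Theorem~\ref{main1} to obtain an essential $n$-sensitive tuple $(x_1,\dotsc,x_n)$ that is a minimal point of $(X^n,T^{(n)})$. Since the coordinate projections $\pi_i\colon(X^n,T^{(n)})\to(X,T)$ map minimal points to minimal points, each $x_i$ is a minimal point of $(X,T)$; essentiality forces the $x_i$ to be pairwise distinct, and we have exhibited $n$ distinct minimal points of $(X,T)$.

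For the backward direction, given $n$ distinct minimal points $z_1,\dotsc,z_n$, I form $w=(z_1,\dotsc,z_n)\in X^n$ and, by Zorn's lemma, select a minimal subset $K$ of $\overline{\orb(w,T^{(n)})}$. Any $(y_1,\dotsc,y_n)\in K$ is a minimal point of $(X^n,T^{(n)})$ whose projections $y_i$ are minimal points of $(X,T)$. When the $z_i$ lie in $n$ pairwise disjoint minimal subsets $M_1,\dotsc,M_n$, iterating Lemma~\ref{lem: XY-minimal-points} lets me take $K\subseteq M_1\times\dotsb\times M_n$; disjointness of the $M_i$ then forces $y_i\neq y_j$ for $i\neq j$. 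Lemma~\ref{lem:Sn-Wm} upgrades $(y_1,\dotsc,y_n)$ to an essential $n$-sensitive tuple, and Theorem~\ref{main1} delivers broken $\calf_{ps}$-$n$-sensitivity.

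The main obstacle is the remaining case, where several of the $z_i$ share a common minimal subsystem and the minimal point extracted above may have coinciding coordinates. My idea is to refine the choice of minimal point by leveraging the weak mixing of the ambient system together with the regional proximal calculus of Theorem~\ref{thm:Q-Q-n} and Lemma~\ref{lem:Sn-Wm}: inside each shared subsystem I would replace each offending coincidence by a pairwise distal selection of minimal partners, thereby producing an $n$-tuple of pairwise distal minimal points. The orbit closure of such a distal tuple is disjoint from $\Delta^{(n)}(X)$, so every minimal point in it is automatically essentially $n$-sensitive, and Theorem~\ref{main1} again finishes the argument.
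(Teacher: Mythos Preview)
Your forward direction is correct and coincides with the paper's argument. The backward direction in the disjoint case is also fine. The gap is entirely in your ``remaining case,'' and the fix you sketch is not a proof.

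Concretely: you propose to ``replace each offending coincidence by a pairwise distal selection of minimal partners'' via Theorem~\ref{thm:Q-Q-n} and Lemma~\ref{lem:Sn-Wm}, but Theorem~\ref{thm:Q-Q-n} is a statement about $Q_n$ inside a \emph{minimal} system and tells you nothing about how to locate distal tuples; Lemma~\ref{lem:Sn-Wm} only says that any non-diagonal tuple is sensitive once you already have one. Neither produces the pairwise distal (or even pairwise distinct) minimal tuple you need, and you never say how such a selection is found. As written, this paragraph is a statement of intent rather than an argument.

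The paper's route is both simpler and complete. Rather than starting from $\overline{\orb(w,T^{(n)})}$ and hoping a minimal subset avoids $\Delta^{(n)}(X)$, work in the full product $Z=\overline{\orb(z_1,T)}\times\dotsb\times\overline{\orb(z_n,T)}$. By iterating Lemma~\ref{lem: XY-minimal-points}, minimal points are dense in $Z$. Now observe that for each pair $i\neq j$ either the factors $\overline{\orb(z_i,T)}$ and $\overline{\orb(z_j,T)}$ are disjoint, or they coincide; in the latter case $z_i\neq z_j$ forces that minimal set to have at least two points, hence to be perfect, so $\{y\in Z: y_i=y_j\}$ is closed with empty interior in $Z$. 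The finite union over all pairs is still nowhere dense, so density of minimal points yields a minimal $(y_1,\dotsc,y_n)\in Z$ with pairwise distinct coordinates. Lemma~\ref{lem:Sn-Wm} and Theorem~\ref{main1} then finish exactly as you say. This single argument handles both of your cases at once and needs neither distality nor the regional proximal machinery.
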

\begin{proof}
	$(\Rightarrow)$
	By Theorem~\ref{main1},
	there is an essential $n$-sensitive tuple $(x_1,\dotsc, x_{n})$ which is a minimal point of $(X^{n}, T^{(n)})$. Thus $(X,T)$ has at least $n$ minimal points.
	
	$(\Leftarrow)$ Let $x_1,\dotsc,x_n$ be the pairwise distinct minimal points of $(X,T)$
	and $Z=\overline{\orb(x_1,T)}\times\dotsb\times\overline{\orb(x_n,T)}$, it is clear that $Z$ is $T^{(n)}$-invariant and by Lemma~\ref{lem: XY-minimal-points} $(Z,T^{(n)})$ has a dense set of minimal points. Thus we can choose  $y_i\in X$ for $i=1,\dotsc,n$ such that $(y_1,\dotsc,y_n)$ is a minimal point of $\overline{\orb(x_1,T)}\times\dotsb\times\overline{\orb(x_n,T)}$ and they are pairwise distinct.
	Since $(X,T)$ is weakly mixing,
	by Lemma~\ref{lem:Sn-Wm} $(y_1,\dotsc, y_n)$ is an essential $n$-sensitive tuple of $(X,T)$. Then by 
	Theorem~\ref{main1},
	$(X,T)$ is broken $\calf_{ps}$-$n$-sensitive.
\end{proof}

Now we show a necessary and sufficient condition for a non-trivial weakly mixing system to be broken $\calf_{pubd}$-$n$-sensitive.

\begin{prop}\label{prop:weak-pubd-n}
	Let $(X,T)$ be a non-trivial weakly mixing system and $n\geq 2$. Then the following statements are equivalent:
	\begin{enumerate}
		\item $(X,T)$ is broken $\calf_{pubd}$-$n$-sensitive;
		\item $(X,T)$ has at least $n$ $\calf_{pubd}$-recurrent points;
		\item $|\supp(X,T)|\geq n$.
	\end{enumerate}
\end{prop}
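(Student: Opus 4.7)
The plan is to prove a cycle of implications, handling the equivalence $(2) \Leftrightarrow (3)$ as a side argument. The easiest step is $(1) \Rightarrow (3)$: apply Corollary~\ref{cor:pubd-supp} to obtain an essential $n$-sensitive tuple $(x_1,\dotsc,x_n) \in \supp(X^n,T^{(n)})$; by Lemma~\ref{lem:supp-pld}(2) this support factors as $\supp(X,T)^n$, so each coordinate lies in $\supp(X,T)$, and since the tuple is essential these coordinates are pairwise distinct, giving $|\supp(X,T)| \geq n$.

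For $(2) \Leftrightarrow (3)$ I would invoke Lemma~\ref{lem:supp-pld}(1). The direction $(2) \Rightarrow (3)$ is immediate: every $\calf_{pubd}$-recurrent point lies in $\supp(X,T)$, so $n$ distinct such points force $|\supp(X,T)| \geq n$. For $(3) \Rightarrow (2)$, recall that the $\calf_{pubd}$-recurrent points form a dense subset of $\supp(X,T)$; a finite subset of a Hausdorff space is closed, so if the closure contains at least $n$ points then the dense set itself must already contain at least $n$ points.

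The main content is $(3) \Rightarrow (1)$. Pick $n$ pairwise distinct points $p_1,\dotsc,p_n \in \supp(X,T)$; then $(p_1,\dotsc,p_n)$ lies in the set $\supp(X,T)^n \setminus \Delta^{(n)}(X)$, which is open in $\supp(X,T)^n$ because $\Delta^{(n)}(X)$ is closed in $X^n$. By Lemma~\ref{lem:supp-pld}(2) this set equals $\supp(X^n,T^{(n)}) \setminus \Delta^{(n)}(X)$, and by Lemma~\ref{lem:supp-pld}(1) applied to the $n$-fold product system, the $\calf_{pubd}$-recurrent points of $(X^n,T^{(n)})$ are dense in $\supp(X^n,T^{(n)})$. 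Hence I can choose a $\calf_{pubd}$-recurrent point $(z_1,\dotsc,z_n)$ of $(X^n,T^{(n)})$ with all coordinates pairwise distinct. Since $(X,T)$ is weakly mixing, Lemma~\ref{lem:Sn-Wm} gives $S_n(X,T) \cup \Delta_n(X) = X^n$, and as the $z_i$ are distinct the tuple sits in $S_n^e(X,T)$. Theorem~\ref{thm:broken=} then yields broken $\calf_{pubd}$-$n$-sensitivity.

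The crux of the argument is that I never try to build a $\calf_{pubd}$-recurrent tuple coordinate by coordinate, which would be the natural but genuinely obstructed path (positive Banach density of each individual return-time set does not propagate to positive Banach density of their intersection). Instead I apply Lemma~\ref{lem:supp-pld}(1) directly inside $(X^n,T^{(n)})$ and exploit the open choice left by separating the full diagonal-like set $\Delta^{(n)}(X)$; weak mixing then upgrades the tuple to an essential $n$-sensitive tuple essentially for free via Lemma~\ref{lem:Sn-Wm}.
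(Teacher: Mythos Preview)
Your proof is correct and follows essentially the same approach as the paper. The paper organizes the argument as the cycle $(1)\Rightarrow(2)\Rightarrow(3)\Rightarrow(1)$, obtaining $(1)\Rightarrow(2)$ directly from Theorem~\ref{thm:broken=} by projecting the $\calf_{pubd}$-recurrent essential tuple onto its coordinates, whereas you route through $(1)\Rightarrow(3)$ via Corollary~\ref{cor:pubd-supp} and add the short density argument for $(3)\Rightarrow(2)$; the substantive step $(3)\Rightarrow(1)$ --- approximating a tuple in $\supp(X^n,T^{(n)})\setminus\Delta^{(n)}(X)$ by a $\calf_{pubd}$-recurrent tuple and then invoking Lemma~\ref{lem:Sn-Wm} and Theorem~\ref{thm:broken=} --- is identical in both.
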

\begin{proof}
	$(1)\Rightarrow(2)$ By Theorem~\ref{thm:broken=} there exists an essential $n$-sensitive tuple which is
	an $\calf_{pubd}$-recurrent point of $(X^n, T^{(n)})$. Thus $(X,T)$ has at least $n$ $\calf_{pubd}$-recurrent points.
	
	$(2)\Rightarrow(3)$
	Let $x_1,\dotsc,x_n$ be the pairwise distinct points which are $\calf_{pubd}$-recurrent points of $(X,T)$.
	By Lemma~\ref{lem:supp-pld}(1),
	$x_1,\dotsc,x_n\in\supp(X,T)$. Thus 
	$|\supp(X,T)|\geq n$.
	
	$(3)\Rightarrow(1)$
	Let $x_1,\dotsc,x_n$ be the pairwise distinct points of $\supp(X,T)$,
	then $(x_1,\dotsc,x_n)\in \supp(X,T)\times\dotsc\times\supp(X,T)=\supp(X^n,T^{(n)})$ by Lemma~\ref{lem:supp-pld}(2). 
	Let 
	$$3\delta=\min_{1\leq i<j\leq n}d(x_i, x_j).$$
	By Lemma~\ref{lem:supp-pld}(1),
	there exists an $\calf_{pubd}$-recurrent point $(y_1,\dotsc,y_n)$ of $(X^n, T^{(n)})$ such that 
	$$(y_1,\dotsc,y_n)\in  B(x_1,\delta) \times\dotsb\times B(x_n,\delta)$$ then  $y_1,\dotsc,y_n$ are pairwise distinct.
	Since $(X,T)$ is weak mixing,
	by Lemma~\ref{lem:Sn-Wm} $(y_1,\dotsc, y_n)$ is an essential $n$-sensitive tuple of $(X,T)$.
	By Theorem~\ref{thm:broken=},
	$(X,T)$ is broken $\calf_{pubd}$-$n$-sensitive.
\end{proof}

For minimal systems, we proved that 
$(X,T)$ is broken $\calf_{pubd}$-sensitive if and only if it is not mean equicontinuous.
Now we prove a similar result for weakly mixing systems.

\begin{prop}\label{prop:weak-pubd-supp}
	Let $(X,T)$ be a weakly mixing system. Then the following statements are equivalent:
	\begin{enumerate}
		\item $(X,T)$ is mean equicontinuous;
		\item $\supp(X,T)$ is a sigleton;
		\item $(X,T)$ is not broken $\calf_{pubd}$-sensitive.
	\end{enumerate}
\end{prop}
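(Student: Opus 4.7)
The plan is to establish the cycle $(1)\Rightarrow(3)\Rightarrow(2)\Rightarrow(1)$. The implication $(3)\Rightarrow(2)$ is immediate from Proposition~\ref{prop:weak-pubd-n} (with $n=2$), which crucially uses weak mixing, so the substantive work is in linking mean equicontinuity with the other two conditions.

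For $(1)\Rightarrow(3)$ I would argue by contradiction. Assume that $(X,T)$ is mean equicontinuous and broken $\calf_{pubd}$-sensitive. Theorem~\ref{thm:broken=} then produces an essential sensitive pair $(x,y)$ (so $x\neq y$) which is an $\calf_{pubd}$-recurrent point of $(X^2,T^{(2)})$. Lemma~\ref{lem:Sn-Qn}(1) puts $(x,y)\in Q(X,T)$, and Lemma~\ref{lem: Q-P-BP} identifies $Q(X,T)=BP(X,T)$ under mean equicontinuity, so $(x,y)$ is Banach proximal. On the other hand, setting $\eps=d(x,y)/3$, the $\calf_{pubd}$-recurrence of $(x,y)$ at the neighborhood $B(x,\eps)\times B(y,\eps)$ yields positive upper Banach density return times on which $d(T^n x,T^n y)>\eps$, which contradicts Banach proximality. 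The implication $(3)\Rightarrow(2)$ is then just Proposition~\ref{prop:weak-pubd-n} with $n=2$: since $\supp(X,T)$ is nonempty by compactness of $X$ and has cardinality less than $2$, it must be a singleton.

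For $(2)\Rightarrow(1)$, write $\supp(X,T)=\{x_0\}$. Lemma~\ref{lem:supp-pld}(1) forces $x_0$ to be the unique $\calf_{pubd}$-recurrent point of $(X,T)$, and then Lemma~\ref{lem:only-recurrent} gives $N(z,U)\in\calf_{lbd1}$ for every $z\in X$ and every neighborhood $U$ of $x_0$. Given $\eps>0$ and arbitrary $x,y\in X$, set $F=N(x,B(x_0,\eps/2))\cap N(y,B(x_0,\eps/2))$; the intersection of two sets of lower Banach density one again has lower Banach density one (by subadditivity of upper Banach density applied to complements), so $\lim_{m\to\infty}|F\cap[0,m-1]|/m=1$. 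For $k\in F$ the triangle inequality gives $d(T^k x,T^k y)<\eps$, and splitting the average over $F$ and its complement yields
\[
\limsup_{m\to\infty}\frac{1}{m}\sum_{k=0}^{m-1}d(T^k x,T^k y)\le \eps.
\]
Since $\eps$ was arbitrary, this $\limsup$ equals $0$ for \emph{every} pair $(x,y)\in X\times X$, and mean equicontinuity then holds trivially (any $\delta>0$ works).

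The only step with any real content is $(2)\Rightarrow(1)$: one must convert the lower Banach density one control from Lemma~\ref{lem:only-recurrent} into the vanishing of the Ces\`aro average of the distances. That is a short density estimate, so I do not anticipate a genuine obstacle. The main conceptual point is that when $\supp(X,T)$ collapses to a single point, every orbit concentrates near $x_0$ in a sufficiently strong density sense to kill all pairwise Ces\`aro differences, which is a much stronger property than mean equicontinuity itself.
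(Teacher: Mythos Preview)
Your argument is correct. The cycle $(1)\Rightarrow(3)\Rightarrow(2)\Rightarrow(1)$ closes, and each step is sound: in particular the density computation in $(2)\Rightarrow(1)$ is right (lower Banach density one passes to finite intersections via subadditivity of $\overline{BD}$ on complements, and this forces the ordinary density to be one, which is all the Ces\`aro estimate needs). One small point worth making explicit: when you say Lemma~\ref{lem:supp-pld}(1) forces $x_0$ to be the unique $\calf_{pubd}$-recurrent point, you are implicitly using that the set of such points is nonempty; this follows since $\supp(X,T)=\{x_0\}$ is nonempty and equals the closure of that set.

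Your route differs from the paper's in two ways. First, the paper links (1) and (2) directly, invoking \cite[Theorem~5.2]{LT14} (which characterizes when $BP(X,T)=X\times X$ via $\supp(X,T)$ being a singleton) together with Lemma~\ref{lem: Q-P-BP}; for $(1)\Rightarrow(2)$ it also uses weak mixing and Lemma~\ref{lem:Sn-Wm} to get $Q(X,T)=X\times X$. Your $(1)\Rightarrow(3)$ instead recycles the contradiction argument from Proposition~\ref{prop:pubd-mean} and needs only transitivity, not the full strength of weak mixing, at that step. Second, your $(2)\Rightarrow(1)$ bypasses the external citation entirely: using Lemma~\ref{lem:only-recurrent} and the elementary Ces\`aro estimate you prove directly that a singleton support forces every pair to have vanishing mean distance, which holds for arbitrary systems. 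So your proof is more self-contained and isolates exactly where weak mixing enters (only in $(3)\Rightarrow(2)$ via Proposition~\ref{prop:weak-pubd-n}), whereas the paper's version is shorter by outsourcing the support/Banach-proximality equivalence to \cite{LT14}.
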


\begin{proof}
	$(2)\Leftrightarrow(3)$
	By Proposition~\ref{prop:weak-pubd-n}.
	
	$(1)\Rightarrow(2)$
	Since $(X,T)$ is weakly mixing, 
	by Lemma~\ref{lem:Sn-Qn} and  Lemma~\ref{lem:Sn-Wm},
	$Q(X,T)\supset S_2(X,T)\cup\Delta_2=X\times X$
	 and by Lemma~\ref{lem: Q-P-BP},
	$Q(X,T)=P(X,T)=BP(X,T)=X\times X$, 
	by \cite[Theorem $5.2$]{LT14} 
	$\supp(X,T)$ is a singleton.
	
	$(2)\Rightarrow(1)$
	By \cite[Theorem $5.2$]{LT14},  
	$BP(X,T)=X\times X$, thus $Q(X,T)=P(X,T)=BP(X,T)=X\times X$. By 
	Lemma~\ref{lem: Q-P-BP}, $(X,T)$ is mean equicontinuous.
\end{proof}

\subsection{Examples for broken family sensitivity in weakly mixing systems}
The following implications follow from the definitions, we show that most of them are strict in weakly mixing systems.
\begin{equation*}
\begin{array}{cccc}
\ \ \text{broken $\calf_{ps}$-$n$-sen.} \   \ \   \Rightarrow &  \ \ \ \text{broken $\calf_{pubd}$-$n$-sen.} \   \ \  \Rightarrow &  \text{broken $\calf_{inf}$-$n$-sen.}\\
\Uparrow & \Uparrow & \Updownarrow  \\
\text{broken $\calf_{ps}$-$(n+1)$-sen.} \Rightarrow & \text{broken $\calf_{pubd}$-$(n+1)$-sen.} \Rightarrow &  \text{broken $\calf_{inf}$-$(n+1)$-sen.}
\end{array}
\end{equation*}

\begin{exam}[broken $\calf_{pubd}$-$n$-sensitivity $\not\Rightarrow$ broken $\calf_{ps}$-sensitivity]
	In \cite[Theorem 1.1]{Lian2017}, there is a weakly mixing system with full support and a unique minimal point.
	From Proposition~\ref{prop:weak-E-pubd} and
	Proposition~\ref{prop:ps-AP} we know that this kind of system is broken $\calf_{pubd}$-$n$-sensitive for any $n\geq 2$ but not broken $\calf_{ps}$-sensitive.
\end{exam}

\begin{exam}[broken $\calf_{inf}$-$n$-sensitivity $\not\Rightarrow$ broken $\calf_{pubd}$-sensitivity]
	It is shown in~\cite{HZ02} that there exists a strongly mixing subshift with $\supp(X,T)$ a singleton.
	By Proposition~\ref{prop:weak-inf} and Proposition~\ref{prop:weak-pubd-supp}, the system is broken $\calf_{inf}$-$n$-sensitive for any $n\geq 2$ and not broken $\calf_{pubd}$-sensitive.
\end{exam}

\begin{exam}[broken $\calf_{ps}$-$n$-sensitivity $\not\Rightarrow$ broken $\calf_{ps}$-$(n+1)$-sensitivity]
	For every $n\geq 2$, by \cite[Proposition 6.3]{HKKPZ18} there exists a non-trivial strongly mixing system with exact $n$ minimal points.
	Then by Theorem~\ref{prop:ps-AP}, this strongly mixing system is broken $\calf_{ps}$-$n$-sensitive, but not
	broken $\calf_{ps}$-$(n+1)$-sensitive.
\end{exam}

\begin{exam}[broken $\calf_{pubd}$-$2$-sensitivity $\not\Rightarrow$ broken $\calf_{pubd}$-$3$-sensitivity]
\medskip
	Inspired by \cite[Proposition 6.3]{HKKPZ18}, 
	We construct a
	weakly mixing system $(X, T)$ with two fixed points, which are the only $\calf_{pubd}$-recurrent points of the system.
	Let $\Sigma=\{0,1\}^{\bbz_+}$ and
	$\sigma :\Sigma \to \Sigma$ be the full (one-sided) shift.
	We are going to find the system $(X, T)$ of the form $(\overline{\orb_{\sigma} (x)}, \sigma)$  for some $x \in \Sigma$.
	
	Now we are going to define $x\in \Sigma$. Let $A_1=10$ and define inductively
	the blocks $A_2, A_3, \dots$,  define $x$ to be the limit of the blocks $A_k$.
	Suppose that we have defined $A_k, k\in \mathbb{N}$. Since $A_{k}$ has finitely many subblocks, there is a finite number of different
	pairs of these subblocks.
	For any pair $(W_1, W_2)$ of subblocks of $A_k$ we will define a block $c (W_1, W_2)$ by using their combination.
	Then we are ready to define $A_{k+1}$: at the beginning of $A_{k+1}$ we write $A_k0^{|A_k|\cdot 10^{(k+1)}}$, and then all possible combination blocks $c (W_1, W_2)$ of
	pairs $(W_1, W_2)$ of subblocks of $A_{k}$ in any fixed order, and then $1^{(k+1)}0^{|A_k|\cdot 10^{(k+1)}}$.
	\emph{The combination block} of the pair $W_1, W_2$, i.e. $c(W_1, W_2)$, is defined as follows:
	\[\begin{split}
	c(W_1, W_2)&=W_1 0^{|A_k|\cdot 10^{(k+1)}} W_2 0^{|A_k|\cdot 10^{(k+1)}} W_1 0^{|A_k|\cdot (10^{(k+1)}+1)} W_2 0^{|A_k|\cdot (10^{(k+1)}+1)}\dotsb
	\\& W_1 0^{|A_k|\cdot 2\cdot 10^{(k+1)}} W_2 0^{|A_k|\cdot 2\cdot 10^{(k+1)}}.
    \end{split}\]

    Put $(X, T):=(\overline{\orb_{\sigma} (x)},\sigma)$. 
    Recall that the base for the open sets in $\Sigma$ is given by the collection of all cylinder sets
    $[c_0c_1c_2 \dots c_m]=\{x \in \Sigma:~ x_i=c_i \text{ for } 0\le i\leq m \}.$
    We are going to prove that $N(U,V)$ is a thick set for any opene subsets $U,V\subset X$. 
    Fix opene subsets $U$, $V$ of $X$.
    Clearly there exist subblocks $W_1$ and $W_2$ of $x$ such that $U\supset [W_1]\cap X$ and $V\supset [W_2]\cap X$, and there exists $m\in\bbn$ such that both  $W_1$ and $W_2$ are subblocks of $A_m$. For any $k\geq m$, both  $W_1$ and $W_2$ are subblocks of $A_k$,
    there exists a combination block $c(W_1, W_2)$
    containing the subblock $W_1 0^{|A_k|\cdot 10^{(k+1)}} W_2$, 
    $W_1 0^{|A_k|\cdot (10^{(k+1)}+1)} W_2 0^{|A_k|\cdot (10^{(k+1)}+1)}$, $\dotsc$,
    $W_1 0^{|A_k|\cdot 2\cdot 10^{(k+1)}} W_2 0^{|A_k|\cdot 2\cdot 10^{(k+1)}}$,
    and hence $N(U, V)\supset N([W_1]\cap X, [W_2]\cap X)\supset \{|A_k|\cdot 10^{(k+1)}+ |W_1|, |A_k|\cdot 10^{(k+1)}+ |W_1|+ 1, \dots, |A_k|\cdot 2\cdot 10^{(k+1)}+ |W_1|\}$, which implies that $N(U,V)$ is a thick set and 
    thus $(X, T)$ is weakly mixing.
    It is easy to see that for each $k\in \mathbb{N}$, $0^{|A_k|\cdot 10^{(k+1)}}$ and $1^{(k+1)}$ appear in $x$.
	In particular, both
	$0^\infty$ and $1^\infty$ are fixed points of $(X,T)$.
	Collapsing $0^\infty$ and $1^\infty$ to a point, we get a factor map $\pi:(X, T)\to (Y, S)$ such that $\pi(0^{\infty})=\pi(1^{\infty})$ and $\pi^{-1}(\pi(z))=z$ for any $z\in X\setminus\{0^{\infty}, 1^{\infty}\}$.
	Then $\pi(0^{\infty})$ is a fixed point of $(Y, S)$. 
	Now let us check that $\pi(0^{\infty})$ is the unique $\calf_{pubd}$-recurrent point of $(Y,S)$.
    Actually, in any subblock of $x$ with length more than $|A_k|\cdot 10^{(k+1)}$ one can find that the distribution probability of the blocks of consecutive 1's or 0's of length $|A_k|$ is greater than or equal to $1-\frac{3}{10^{(k+1)}}$. That is, for
	any fixed $k\in\bbn$, denote $n=|A_k|$, 
	for any subblock $w=w_1w_2\dots w_l$ of $x$ with $l\ge |A_k|\cdot 10^{(k+1)}$, we have
	\[\frac{|\{i:w_{i+1}w_{i+2}\dotsc w_{i+n}=1^n\ \text{or}\ 0^n\}|}{l-n+1}\geq 1-\frac{3|A_k|}{l-n+1}
	\geq1-\frac{3}{10^{(k+1)}-1}.\]
	Then for any $y\in Y$ and $m\in\bbn$, 
	\[\frac{|\{i: d_Y(S^i(y), S^i(\pi(0^{\infty}))<\frac{1}{2^n}
	\}\cap[m,m+l-n]|}{l-n+1}
	\geq 1-\frac{3}{10^{(k+1)}-1}.\]
	That is, for any $y\in Y$ and neighbourhood $W$ of $\pi(0^\infty)$, $N(y,W)\in \calf_{lbd1}$.
	By Lemma~\ref{lem:only-recurrent}, 
	$\pi(0^\infty)$ is the unique $\calf_{pubd}$-recurrent point of $(Y,S)$, and then $0^\infty$ and $1^\infty$ are the only $\calf_{pubd}$-recurrent points of $(X,T)$. By Proposition~\ref{prop:weak-pubd-n}
    $(X,T)$ is broken $\calf_{pubd}$-$2$-sensitive but not broken $\calf_{pubd}$-$3$-sensitive.
\end{exam}

\end{document}